\documentclass{amsart}
\usepackage{amsthm, amssymb, amsfonts, amscd}
\usepackage{graphicx}
\usepackage{tikz-cd}
\usepackage{verbatim}
\usepackage{enumitem}
\usepackage{mathrsfs, mathtools}
\usepackage{xcolor}
\usepackage[colorlinks = true,
            linkcolor = black,
            urlcolor  = blue,
            citecolor = black]{hyperref}
\usepackage{cleveref}
\usepackage[margin=1in]{geometry}
\usepackage{setspace}
\usepackage{diagbox}
\allowdisplaybreaks

\theoremstyle{definition} 
\newtheorem{thm}{Theorem}[section]
\newtheorem{cor}[thm]{Corollary}
\newtheorem{prop}[thm]{Proposition}
\newtheorem{lem}[thm]{Lemma}

\newtheorem{rmk}[thm]{Remark}

\newcommand{\mr}{\mathrm}
\newcommand{\mf}{\mathfrak}
\newcommand{\mb}{\mathbf}
\newcommand{\mc}{\mathcal}

\newcommand{\Aut}{\mr{Aut}}

\newcommand{\bb}{\mathbb}

\newcommand{\bZ}{\bb{Z}}
\newcommand{\bQ}{\bb{Q}}
\newcommand{\bR}{\bb{R}}
\newcommand{\bC}{\bb{C}}
\newcommand{\bP}{\bb{P}}

\newcommand{\bF}{\bb{F}}
\newcommand{\bE}{\bb{E}}
\newcommand{\fp}{\bF_p}

\newcommand{\zp}{\bZ_p}

\newcommand{\ze}{\zeta}
\newcommand{\al}{\alpha}
\newcommand{\be}{\beta}
\newcommand{\ga}{\gamma}

\newcommand{\ol}{\overline}
\newcommand{\Hom}{\mr{Hom}}
\newcommand{\M}{\mr{M}}
\newcommand{\cok}{\mr{cok}}
\newcommand{\Sur}{\mr{Sur}}
\newcommand{\rank}{\mr{rank}}

\newcommand{\lt}{\left |}
\newcommand{\rt}{\right |}
\newcommand{\lf}{\left \lfloor}
\newcommand{\rf}{\right \rfloor}
\begin{document}

\newcommand{\lcom}[1]{{\color{red}{Jungin: #1} }}
\newcommand{\jcom}[1]{{\color{blue}{Jiwan: #1} }}
\newcommand{\ycom}[1]{{\color{orange}{Myungjun: #1} }}

\title[Sharp threshold for classical random $p$-adic matrices]{Sharp threshold for universality of cokernels of classical random matrix models over the $p$-adic integers}
\author{Jiwan Jung, Jungin Lee and Myungjun Yu}
\date{}
\address{J. Jung -- Department of Mathematics, Pohang University of Science and Technology, Pohang 37673, Republic of Korea \newline
J. Lee -- Department of Mathematics, Ajou University, Suwon 16499, Republic of Korea \newline M. Yu -- Department of Mathematics, Yonsei University, Seoul 03722, Republic of Korea}
\email{guinipig123@postech.ac.kr, jileemath@ajou.ac.kr, mjyu@yonsei.ac.kr}

\begin{abstract}
We prove that $\frac{\log n}{n}$ is the sharp threshold for universality of the distribution of cokernels of random matrices over $\mathbb{Z}_p$. More precisely, let $\alpha_n = \frac{c\log n}{n}$ for a constant $c>0$ and let $A(n)$ be an $\alpha_n$-balanced random matrix over $\mathbb{Z}_p$. 
For non-symmetric, symmetric, and alternating matrix models, we prove that if $c>1$, then the limiting distribution of the cokernel of $A(n)$ coincides with the universal distribution of the corresponding symmetry type, whereas universality fails at the critical scale $c=1$.
This improves earlier universality results, which required $\alpha_n \gg \frac{\log n}{n}$, to the optimal threshold. As an application, we generalize the universality result for Sylow $p$-subgroups of sandpile groups of Erdős–Rényi random graphs to a broader class of Erdős–Rényi graph sequences.
Our approach is based on a unified framework that simultaneously treats all symmetry types of random matrices as well as the random graph model, rather than handling each case separately.
\end{abstract}
\maketitle

\section{Introduction} \label{Sec1}

For a prime $p$, let $\zp$ denote the ring of $p$-adic integers and let $\fp$ be the finite field with $p$ elements. 
For a commutative ring $R$, write $\M_n(R)$ (resp. $\mr{Sym}_n(R)$, $\mr{Alt}_n(R)$) for the set of all (resp. symmetric, alternating) $n \times n$ matrices over $R$.
Let $\al \in (0, 1/2]$ be a real number. A random element $x$ in the finite field $\fp$ is $\al$-\emph{balanced} if 
$$
\bP(x = r) \leq 1 - \al
$$ 
for every $r \in \fp$. A random element $x$ in $\zp$ or $\bZ/p^d\bZ$ is $\al$-\emph{balanced} if its reduction modulo $p$ is $\al$-\emph{balanced} as a random element in $\fp$. A random matrix in $\M_n(R)$ (where $R = \fp$, $\bZ/p^d\bZ$ or $\zp$) is $\al$-\emph{balanced} if its entries are independent and $\al$-balanced. Similarly, a random matrix in $\mr{Sym}_n(R)$ (resp. $\mr{Alt}_n(R)$)  is $\al$-\emph{balanced} if its upper triangular (resp. strictly upper triangular) entries are independent and $\al$-balanced. We note that in the definition of an $\al$-balanced matrix, the entries are not required to be identically distributed.

We first recall several universality results concerning the distribution of cokernels of random $p$-adic matrices. Under $\al$-balancedness assumptions on the entries, the distribution of the cokernel of an $n \times n$ matrix over $\zp$ converges, as $n \to \infty$, to a universal limiting distribution. In particular, the limiting distribution depends only on the symmetry type (non-symmetric, symmetric, or alternating) and not on the specific entry distributions. In the following theorems, $\cok(A(n))$ denotes the cokernel of a matrix $A(n)$ and $\Aut(H)$ denotes the automorphism group of $H$.

\begin{thm} \label{thm_nonsym}
(\cite[Theorem 4.1]{NW22}) Let $(\al_n)_{n \ge 1}$ be a sequence of positive real numbers in $(0,1/2]$ such that for every constant $\Delta > 0$, we have $\al_n \geq \frac{\Delta \log n}{n}$ for all sufficiently large $n$. Let $A(n)$ be an $\al_n$-balanced random matrix in $\M_n(\zp)$ for each $n \ge 1$. Then for every finite abelian $p$-group $H$,
\begin{equation*}
\lim_{n \to \infty} \bP(\cok(A(n)) \cong H) = \frac{1}{|\Aut(H)|} \prod_{i=1}^{\infty} (1-p^{-i}).
\end{equation*}
\end{thm}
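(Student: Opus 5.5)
The plan is the moment method of Wood. First, reduce to a statement about moments. For every finite abelian $p$-group $G$ one shows
\[
\lim_{n\to\infty}\bE\bigl(\#\Sur(\cok(A(n)),G)\bigr)=1 .
\]
The Cohen--Lenstra distribution $\frac{1}{|\Aut(H)|}\prod_{i\ge1}(1-p^{-i})$ has all $G$-moments equal to $1$. These moments grow slowly enough that they determine the distribution uniquely, and they do so robustly: Wood's moment theorem says that convergence of moments implies convergence of the distributions. So the theorem follows from this limit.

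To compute the moment, take $G$ of exponent dividing $p^d$ and work over $R=\bZ/p^d\bZ$. Then $\#\Sur(\cok A,G)$ equals the number of surjections $F\colon R^n\to G$ with $FA=0$. Taking expectations and using independence of the entries,
\[
\bE\#\Sur(\cok A,G)=\sum_{F\in\Sur(R^n,G)}\prod_{j=1}^n \bP(FA_j=0),
\]
where $A_j$ is the $j$-th column. Each factor is expanded by Fourier analysis:
\[
\bP(FA_j=0)=\frac{1}{|G|}\sum_{\chi\in\Hom(G,\bQ/\bZ)}\bE\,\chi(FA_j),
\]
and $\bE\chi(FA_j)$ factors over the coordinates $i$ as $\prod_i\bE\,e(\chi(F e_i)a_{ij})$.

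The key estimate is as follows. For an $\al$-balanced entry $x$ and any nontrivial character value, $|\bE\, e(\cdot\,x)|\le \exp(-c\al)$ for some constant $c=c(p)$ by a standard lemma. Hence
\[
|\bE\chi(FA_j)|\le \exp(-c\al_n\cdot\#\{i:\chi(Fe_i)\ne0\}).
\]
We classify $F$ as in Wood's work.
\begin{itemize}[leftmargin=*]
\item $F$ is a \emph{code} of distance $\delta n$ if for every proper subgroup $H<G$ (equivalently, every nontrivial $\chi$), more than $\delta n$ of the coordinates $Fe_i$ lie outside $H$. For codes, every nontrivial $\chi$ contributes $\exp(-c\al_n\delta n)$. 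This is $n^{-c\delta\Delta}$, which beats $|G|^{-n}$ corrections; more precisely $\prod_j\bP(FA_j=0)=|G|^{-n}(1+O(|G|e^{-c\delta\al_n n}))^n$, and since $n e^{-c\delta\al_n n}\le n^{1-c\delta\Delta}\to0$ once $\Delta$ is large, the codes contribute $\frac{\#\Sur(R^n,G)}{|G|^n}(1+o(1))\to1$.
\item The non-codes are handled by the depth stratification. For $F$ with a large structured part (small support outside a subgroup $H$), one bounds the number of such $F$ by roughly $\binom{n}{\delta n}|G|^{\delta n}|H|^{n-\delta n}$. One then bounds $\prod_j\bP(FA_j=0)$ by $|H|^{-n}$ times a factor gained from the few columns. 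Here the probability that $FA_j\in$ a given coset is at most about $|H|/|G|+\exp(-c\al_n\cdot\#\text{support})$.
\end{itemize}

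The main obstacle is the non-code contribution. The sparsity $\al_n\approx\Delta\log n/n$ means a single column can vanish under $F$ with probability close to $1$ when $F$ is supported on few coordinates. So the gain must be extracted carefully. For $F$ whose ``bad'' part has support size $k$, the per-column gain is $1-\bP(\text{all }k\text{ relevant entries trivial})\approx 1-e^{-\al_n k}$. Multiplying over $n$ columns gives $(1-e^{-c\al_n k})^{\,n}$-type savings against the entropy $\binom{n}{k}\approx n^k$. For small $k$ the dominant danger is $k=1$, e.g.\ a zero row or column, which has probability $\approx n(1-\al_n)^n\le n^{1-\Delta}$. In general, summing $n^{k}e^{-c\al_n kn}$-type terms gives $\sum_k n^{k(1-c\Delta)}=o(1)$ when $\Delta$ is large. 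This is exactly where the hypothesis ``for every $\Delta$'' is used. I would implement this by induction on the subgroup lattice of $G$ via the depth of $F$, following Wood's ``robust'' counting, with the $\binom{n}{k}$ versus $e^{-\al_n kn}$ trade-off as the crux.
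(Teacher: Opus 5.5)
Your outline is essentially the original Nguyen--Wood argument: Sur-moments, independence of the columns, and codes versus non-codes stratified by depth. It is a sound route for this statement, because the hypothesis that $\alpha_n\ge\Delta\log n/n$ for \emph{every} $\Delta$ is exactly what lets you absorb all the lossy constants.

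The paper proceeds differently. It obtains the statement from Theorem \ref{main thm}(a): eventually $\alpha_n\ge 2\log n/n$, so the matrix is $\frac{2\log n}{n}$-balanced. That theorem is proved through Hom-moments, Fourier-expanding in both $F$ and the dual variable $C$, so the sum runs over tuples of pairs $\mathbf{g}_k=(g_k,h_k)\in G^2$. After reducing to two-point entry distributions (Lemma \ref{lem: extreme point}), tuples are classified by the set $W$ of pairs occurring at least $\gamma n$ times, not by a code/depth property of $F$:
\begin{itemize}
\item the main term, all $g_k\cdot h_l=0$, yields the number of subgroups of $G$;
\item non-isotropic or too small $W$ are killed by $\Theta(n^2)$ nontrivial pairs or by entropy;
\item for a maximal isotropic $W$ with few outliers, the exact identity $\sum_{\mathbf{g}'\in W}\sin^2(\cdot)=\lvert G\rvert/2$ of Lemma \ref{lem: sin sum} gives a saving $n^{-c_3}$ per outlier, with $c_3>1$ as close to $c$ as desired.
\end{itemize}
Your route is more familiar and uses column independence directly, but it pays in constants. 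These are the $c$ in $\lvert\bE\, e(\cdot\,x)\rvert\le e^{-c\alpha}$, the code distance $\delta$ (codes need $c\delta\Delta>1$), and the loss from averaging over characters. As set up, it therefore cannot reach $\alpha_n=\frac{c\log n}{n}$ with $c$ near $1$. The factorization $\prod_j\bP(FA_j=0)$ also does not carry over to the symmetric, alternating and graph models. The paper's joint analysis of $F$ and $C$ buys both the sharp constant and the uniform treatment.

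Two steps of your non-code sketch need care when written out. Take $\sigma=\{i:Fe_i\notin H\}$ with $\lvert\sigma\rvert=k$.

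First, the per-column bound $\bP(F_\sigma A_{j,\sigma}\in H)\le 1-(1-\frac1p)(1-e^{-c\alpha_n k})$ does not follow from a single character. A nontrivial character of $G/H$ may be nonzero on only one $Fe_i$. You have to average $e^{-c\alpha_n w_\psi}$ over all $\psi\in\widehat{G/H}$ and use convexity, which is the analogue of Lemma \ref{lem: sin sum}.

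Second, the saving $e^{-c'\alpha_n kn}$, and hence $\sum_k n^{k(1-c'\Delta)}$, holds only for $k\lesssim 1/\alpha_n$. For $1/\alpha_n\lesssim k<\delta n$ the per-column saving saturates at a constant. There you must beat $\binom nk\lvert G\rvert^k\le\bigl((e/\delta)\lvert G\rvert\bigr)^{\delta n}$ by $e^{-\epsilon n}$, which forces you to fix $\delta$ small before taking $\Delta$ large. Relatedly, your $(1-e^{-c\alpha_n k})^n$ should read $\bigl(1-c''(1-e^{-c\alpha_n k})\bigr)^n$.
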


\begin{thm} \label{thm_sym}
(\cite[Theorem 1.3]{Woo17}) Let $\al \in (0,1/2]$ and let $A(n) \in \mr{Sym}_n(\zp)$ be an $\al$-balanced random matrix for each $n \ge 1$. Then for every finite abelian $p$-group $H$,
\begin{equation*}
\lim_{n \to \infty} \bP(\cok(A(n)) \cong H) = \frac{\# \{ \text{symmetric, bilinear, perfect } \phi : H \times H \to \bC^* \}}{|H||\Aut(H)|} \prod_{i=1}^{\infty} (1-p^{1-2i}).
\end{equation*}
\end{thm}

Let $S_p$ be the set of all finite abelian $p$-groups of the form $G \times G$ for some finite abelian $p$-group $G$. For $H \in S_p$, let $\mr{Sp}(H)$ denote the subgroup of $\Aut(H)$ consisting of automorphisms that preserve a fixed nondegenerate alternating bilinear pairing on $H$. The cardinality of $\mr{Sp}(H)$ is independent of the choice of such a pairing.

\begin{thm} \label{thm_alt}
(\cite[Theorem 1.13]{NW25}) Let $\al \in (0,1/2]$ and let $A(n) \in \mr{Alt}_n(\zp)$ be an $\al$-balanced random matrix for each $n \ge 1$. 
Then for every finite abelian $p$-group $H$,
\begin{equation*}
\lim_{n \to \infty} \bP(\cok(A(2n)) \cong H) 
= \left\{\begin{matrix}
\frac{|H|}{|\mr{Sp}(H)|} \prod_{i=1}^{\infty} (1-p^{1-2i}) & \text{if } H \in S_p,\\
0 & \text{otherwise}
\end{matrix}\right.
\end{equation*}
and 
\begin{equation*}
\lim_{n \to \infty} \bP(\cok(A(2n+1)) \cong \zp \times H)
= \left\{\begin{matrix}
\frac{1}{|\mr{Sp}(H)|} \prod_{i=2}^{\infty} (1-p^{1-2i}) & \text{if } H \in S_p,\\
0 & \text{otherwise}.
\end{matrix}\right.
\end{equation*}
\end{thm}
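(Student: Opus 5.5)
Since this is the universality theorem of Nguyen--Wood for alternating matrices, I would follow the moment method, as in \cite{Woo17}. The first step is to reduce to computing, for every finite abelian $p$-group $G$, the limit of the moments $\bE\,\#\Sur(\cok(A(n)),G)$. For alternating matrices the target values are $|\wedge^2 G|$, the number of alternating forms on $G$ (up to the appropriate normalization). For odd size there is also the free $\zp$ summand. To handle it, I would work modulo $p^d$ with $d$ large, or equivalently use the moments $\bE\,\#\Sur(\cok(A),G)$ over $\bZ/p^d$, which are well defined. These moments do not grow too fast, so a moment-uniqueness theorem (Wood's theorem on moments determining distributions) identifies the limit. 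The explicit limiting formulas are then obtained by checking that the stated Cohen--Lenstra-type symplectic distributions have exactly these moments. This is a known computation (Delaunay / Bhargava--Kane--Lenstra--Poonen--Rains), and it can be done by comparing with the exactly uniform (Haar) alternating case.

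The second step is to compute the moments. I would write
\[
\#\Sur(\cok A,G)=\sum_{F\in \Sur(\zp^n,G)} 1_{FA=0},
\]
and expand the indicator via characters:
\[
1_{FA=0}=|G|^{-n}\sum_{C}\zeta(C(FA)),
\]
where $C$ ranges over $\Hom(G,\bQ/\bZ)^n$. Because $A$ is alternating, only the upper-triangular entries are independent, so $C(FA)=\sum_{i<j}A_{ij}(\text{something antisymmetric in } F,C)$. Independence then gives a product over pairs $i<j$ of characteristic functions of the entries. For $F$ that are ``codes'' (robustly surjective, no small coordinate subset failing to surject), I would show that the total contribution of all nontrivial $(F,C)$ is $o(1)$ and that the main term equals $|\wedge^2 G|$. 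This uses $\alpha$-balancedness to bound each nonconstant character factor by $\exp(-c\alpha)$, and summing over the many pairs $(i,j)$ gives exponential decay.

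The third step, which I expect to be the main obstacle, is controlling the non-code $F$. I would stratify these by ``depth'' (the size of the subgroup that a small set of coordinates fails to generate), in the spirit of \cite{Woo17}. The count of such $F$ has to be bounded and balanced against a weaker, but still usable, bound on $\bP(FA=0)$, obtained by restricting $A$ to the entries where $F$ is robust. The difficulty specific to the alternating case is that $FA=0$ gives only half the independent constraints one might naively expect, because the diagonal is zero and the matrix is antisymmetric. So the counting must account for the alternating structure: the relevant quotient is by the image of pairings with $\wedge^2$ rather than $\mathrm{Sym}^2$, and the estimates must be redone so that the exponent gains still beat the combinatorial losses. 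Once the moments are established for all $G$, the moment theorem gives the statement, with the parity of the matrix size producing the two cases.
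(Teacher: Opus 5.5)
Your overall architecture (compute moments, then apply a moment theorem) is the right one. It breaks, however, at the level of the moments themselves, in two ways.

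First, the target values are wrong. For alternating $A$ with $g_k=F(v_k)$, one has $C(FA)=\sum_{k<l}A_{kl}\bigl(C_l(g_k)-C_k(g_l)\bigr)$. So the characters that are identically trivial on $\{FA\}$ are those for which the matrix $(C_l(g_k))_{k,l}$ is \emph{symmetric}. For surjective $F$ these are exactly $C_l=\beta(g_l,\cdot)$ with $\beta$ a symmetric bilinear form on $G$. The main term is therefore $|\mr{Sym}^2 G|$, not $|\wedge^2 G|$. The value $|\wedge^2 G|$ (alternating forms) belongs to symmetric matrices, Theorem~\ref{thm_sym}. Also, $|\mr{Sym}^2 G|/|\wedge^2 G|=|G|$, so this is not a normalization issue. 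This agrees with the paper: maximal isotropic subgroups of $(G^2,B_3)$ correspond to $\bigsqcup_{H\le G}\Hom(\mr{Sym}^2 H,\bQ/\bZ)$, which gives the value $|\mr{Sym}^2 G|$ in Theorem~\ref{main thm_moment}. As a sanity check with $G=\bZ/p\bZ$: $\bE\,p^{\dim\ker(A \bmod p)}$ tends to $p+1$, not $2$. With your values, the moment theorem would return the symmetric-matrix distribution, not the symplectic one.

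Second, and more seriously, even after this correction the uniqueness step cannot go through as you describe. The moments $|\mr{Sym}^2 G|=|G|\,|\wedge^2 G|$ exceed the $O(|\wedge^2 G|)$ range of Wood's theorem, and this is not a technicality. The quantity $\bP(FA=0)$ never sees the parity of $n$, so $\cok(A(2n))$ and $\cok(A(2n+1))$ have the same limiting moments for every $G$, yet their limits differ. For example, with $G=\bZ/p\bZ$, both limiting corank laws in Corollary~\ref{main thm_fp}(b) give $\bE\,p^{\mr{corank}}=p+1$. Consequences:
\begin{itemize}
\item no statement of the form ``limiting moments determine the limiting law'' holds here;
\item reducing modulo $p^d$ does not help, for the same reason;
\item your two parity cases cannot emerge from the moments.
\end{itemize}
What is missing is a moment theorem for moments of size $|\mr{Sym}^2 G|$ that uses information the moments do not see. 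One such piece of information is the constraint $\dim_{\fp}(\cok(A(n))\otimes\fp)\equiv n \pmod{2}$, which holds because alternating matrices have even rank. You also need the fact that $\rank A(2n+1)=2n$ with probability tending to $1$, so that in odd size you can pass to the torsion part. This is how the paper obtains its alternating case: Theorem~\ref{main thm_moment} combined with \cite[Theorem 4.1]{NW25} and the argument of \cite[Theorem 1.13]{NW25}.

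Your code/non-code/depth estimates are a legitimate Nguyen--Wood-style route to the moments for fixed $\al$. The paper instead uses Hom-moments and splits according to the set of frequently occurring pairs in $G^2$. Either way, the estimates must be aimed at $|\mr{Sym}^2 G|$.
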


By reduction modulo $p$, one obtains the corresponding universality results for the rank distribution of random non-symmetric, symmetric, and alternating matrices over $\fp$.

In light of the above results, it is natural to ask how far the universality phenomenon can be extended to the optimal $\al_n$-balanced setting. In fact, related questions were explicitly raised by Wood \cite[Open Problems 3.3 and 3.10]{Woo22}. A first observation in this direction is that universality fails at the critical scale $\al_n = \frac{\log n}{n}$ (see Section \ref{Sec5}). On the other hand, in the non-symmetric matrix case over a finite field, the second author \cite{Lee25} established a sharp threshold for universality as follows.

\begin{thm} \label{thm_nonsym_Fp}
(\cite[Theorem 1.3]{Lee25}) Let $c>1$ be a constant, $\al_n = \frac{c \log n}{n}$, and let $A(n)$ be an $\al_n$-balanced random matrix in $\M_n(\fp)$ for each $n \ge 1$. Then for every nonnegative integer $k$,
\begin{equation*} 
\lim_{n \to \infty} \bP(\rank(A(n)) = n-k)
= p^{-k^2} \frac{\prod_{i=1}^{\infty} (1-p^{-i})}{\prod_{i=1}^{k} (1-p^{-i})^2}.
\end{equation*}
\end{thm}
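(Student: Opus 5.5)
We will prove Theorem \ref{thm_nonsym_Fp} by the moment method over $\fp$. The plan has three steps: compute the limiting moments $\bE[\#\Sur(\cok(A(n)),\fp^k)]$, check that they are the universal ones, and then recover the rank distribution from the moments.

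\textbf{Step 1: reduction to moments.} Write $\cok(A(n)) \cong \fp^{\,n-\rank A(n)}$. The number of surjections onto $\fp^k$ equals the number of surjective maps $\fp^n\to\fp^k$ vanishing on the column space of $A(n)$. Writing such a map as an $n\times k$ matrix $F$ of rank $k$, we get
$$\bE\#\Sur(\cok A(n),\fp^k)=\sum_{F} \bP(F^{T}A(n)=0)=\sum_F\prod_{j=1}^n \bP(F^T a_j=0),$$
where $a_j$ are the independent columns of $A(n)$. The target is the universal value $1$ for every $k$. These moments determine the distribution, since they grow like $p^{O(k)}$ at most, by the standard uniqueness results for moments of this kind. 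Hence it suffices to show that the sum converges to $1$.

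\textbf{Step 2: splitting the sum by the structure of $F$.} For a fixed vector $v\in\fp^k\setminus\{0\}$, the Halász/Odlyzko-type estimate for $\al$-balanced entries gives
$$\bP(\langle w, a\rangle = 0)\le p^{-1}+\exp(-c'\al\cdot \#\mathrm{supp}(w))$$
for a row functional $w$ of $F$. We partition the maps $F$ according to how far $F$ is from having few nonzero rows. Following the non-symmetric argument, call $F$ a \emph{code} of distance $\delta n$ if every nonzero linear combination of its columns has support at least $\delta n$.

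For codes, each factor is at most $p^{-k}+e^{-c'\al_n\delta n}$. With $\al_n n = c\log n$, this yields
$$\prod_j \le p^{-kn}\bigl(1+p^k n^{-c'c\delta}\bigr)^n .$$
This product is $p^{-kn}(1+o(1))$ provided we use the sharper estimate in which the error term for a single column is $n^{-c(1-o(1))}$. Such an estimate is available when the support size is large, because $\bP(x=r)\le 1-\al_n$ gives the error $(1-\al_n)^{s}$ essentially with constant $1$.

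For non-codes, some combination $v^T F$ has small support $s$. There are roughly $\binom ns p^{s}$ choices of it, and its probability is bounded via the rows where it vanishes. We then stratify by $s$ from $1$ to $\delta n$.

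\textbf{Step 3: the main obstacle, the sparse end $s$ small.} This is exactly where $c>1$ enters. For $s=1$, the event is essentially that $A(n)$ has a zero row or column. Its probability is about $n(1-\al_n)^n\approx n^{1-c}\to 0$ precisely when $c>1$; at $c=1$ it fails, matching the critical-scale counterexample.

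For general small $s$, we bound the contribution by
$$\binom{n}{s}p^{s}\,\max_{r}\bP(\text{the } s \text{ columns combine to } 0)\lesssim n^{s}(1-\al_n)^{(n-s)s'},$$
and need a sum over $s\le\delta n$ that tends to $0$. We would first try to get this via a careful one-step-at-a-time bound. The idea is that a combination supported on $s$ coordinates forces roughly $n-s$ rows to be orthogonal, each with probability $\le 1-\al_n$ once the support is $\ge 1$. This gives roughly
$$\binom ns p^s(1-\al_n)^{n-s}\le \bigl(pn^{1-c+o(1)}\bigr)^{s}$$
for $s\le n/\log n$, with a cruder Littlewood–Offord bound for the intermediate range.

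The delicate part is making the exponent uniformly $n^{-(c-1)}$ across the whole range up to $\delta n$ rather than losing constants in the exponent. We expect that step to require the most care.
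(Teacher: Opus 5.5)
\textbf{There is a genuine gap exactly at the step you flag as the crux: the mechanism you propose gives the wrong order of decay.} Conditioning on all but one coordinate of the support only shows that each of the roughly $n$ independent column constraints fails with probability at least $\al_n$. So for $w$ of support $s$ you get $\bP(w^TA(n)=0)\le(1-\al_n)^{n-s}\approx n^{-c}$, which does not depend on $s$.

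Consequently the displayed inequality $\binom ns p^s(1-\al_n)^{n-s}\le(pn^{1-c+o(1)})^s$ is false for every $s\ge2$: its left side is of order $(pn)^s n^{-c}/s!$. The resulting upper bound for support-$2$ vectors alone is about $p^2n^{2-c}$, which diverges for $1<c<2$.

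What you need is that each column constraint fails with probability at least $(1-\epsilon)s\al_n$, with $c(1-\epsilon)>1$. Then $\bP(w^TA(n)=0)\le n^{-c(1-\epsilon)s}$ and $\sum_{s\ge1}\binom ns p^s n^{-c(1-\epsilon)s}\le(1+pn^{-c(1-\epsilon)})^n-1\to0$. This is where $c>1$ enters for every $s$, not just $s=1$.

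The sharp constant comes from averaging over the Fourier dual variable rather than bounding factor by factor. Since $\sum_{u\in\fp}\sin^2(\pi ur/p)=p/2$ for $r\neq0$, one gets $\frac1p\sum_{u\in\fp}\prod_{i\in S}|1-\al_n+\al_n\ze^{uw_it_i}|\le1-(1-O(s\al_n))s\al_n$. By contrast, the worst-case single-factor bound $1-2\al_n\sin^2(\pi/p)$ already loses the constant once $p\ge5$.

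This averaging is precisely the paper's key estimate in case $\mc{C}^W_{3,c}$ (Proposition \ref{prop_case3c}). There, summing over $\mb{g}_y\in W$ (which runs over the dual variable) with Lemma \ref{lem: sin sum} gives the factor $1-c_2\frac{\log n}{n}|X|$ per column, hence $n^{-c_3|X|}$ with $c_3>1$. Because the loss $O(s\al_n)$ must stay small, the cut has to be at $s\le\ga_2 n/\log n$ with $\ga_2$ small, not at $n/\log n$.

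\textbf{The neighbouring ranges also need different inputs than you propose.}

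For $s$ of order $n/\log n$, your Hal\'asz-type bound $p^{-1}+e^{-c'\al_n s}$ exceeds $1$ when $\al_n s$ is a small constant, so it gives nothing there. What works is a per-column bound $1-\Omega(\min(1,s\al_n))$, giving $e^{-\Theta(n)}$ overall, weighed against the $e^{o(n)}$ count of such supports. This is the role of $\mc{C}^W_{3,b}$.

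For codes, support size alone does not give the constant $1$. With $p=2$ and $Fv$ of support exactly $\delta n$, the per-column error really is about $n^{-2c\delta}$, and $(1+n^{-2c\delta})^n$ diverges for small $\delta$. So no bound uniform over codes can work. You would need equidistribution of the entries of each $Fv$ (giving error $n^{-c(1-o(1))}$), plus an argument weighing non-equidistributed $F$ against their exponentially small proportion. On top of that you still need the usual depth recursion for non-codes when $k\ge2$.

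The paper quotes this statement from \cite{Lee25} and recovers it as the reduction mod $p$ of Theorem \ref{main thm}(a). It avoids the code/non-code analysis entirely by expanding over pairs $(F,C)$ jointly. Generic pairs fall into $\mc{C}_1$, where $\Theta(n^2)$ nonzero pairings give decay $e^{-\Theta(n\log n)}$. The hypothesis $c>1$ is used only through the $\sin^2$-averaging in $\mc{C}^W_{3,c}$.
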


Our goal is to prove a sharp universality result for various random matrix models over $\zp$: non-symmetric matrices, symmetric matrices and alternating matrices. The following theorem, which is the main result of this paper, generalizes Theorems \ref{thm_nonsym}, \ref{thm_sym} and \ref{thm_alt}.

\begin{thm} \label{main thm}
Let $c>1$ be a constant, $\al_n = \frac{c \log n}{n}$ and let $H$ be a finite abelian $p$-group. Suppose that $A(n)$ is an $\al_n$-balanced random matrix in $\mf{M}_n(\zp)$ for each $n \ge 1$, where $\mf{M}_n$ is one of $\M_n$, $\mr{Sym}_n$ or $\mr{Alt}_n$.
\begin{enumerate}[label=(\alph*)]
    \item (Non-symmetric case) If $\mf{M}_n = \M_n$, then
    \begin{equation} \label{eq_main_nonsym}
    \lim_{n \to \infty} \bP(\cok(A(n)) \cong H) 
    = \frac{1}{|\Aut(H)|} \prod_{i=1}^{\infty} (1-p^{-i}).
    \end{equation}
    
    \item (Symmetric case) If $\mf{M}_n = \mr{Sym}_n$, then
    \begin{equation} \label{eq_main_sym}
    \lim_{n \to \infty} \bP(\cok(A(n)) \cong H) 
    = \frac{\# \{ \text{symmetric, bilinear, perfect } \phi : H \times H \to \bC^* \}}{|H||\Aut(H)|} \prod_{i=1}^{\infty} (1-p^{1-2i}).
    \end{equation}

    \item (Alternating case) If $\mf{M}_n = \mr{Alt}_n$, then
    \begin{equation} \label{eq_main_alt_even}
    \lim_{n \to \infty} \bP(\cok(A(2n)) \cong H) 
    = \left\{\begin{matrix}
    \frac{|H|}{|\mr{Sp}(H)|} \prod_{i=1}^{\infty} (1-p^{1-2i}) & \text{if } H \in S_p,\\
    0 & \text{otherwise}
    \end{matrix}\right.
    \end{equation}
    and 
    \begin{equation} \label{eq_main_alt_odd}
    \lim_{n \to \infty} \bP(\cok(A(2n+1)) \cong \zp \times H)
    = \left\{\begin{matrix}
    \frac{1}{|\mr{Sp}(H)|} \prod_{i=2}^{\infty} (1-p^{1-2i}) & \text{if } H \in S_p,\\
    0 & \text{otherwise}.
    \end{matrix}\right.
    \end{equation}
\end{enumerate}
\end{thm}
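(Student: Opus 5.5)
We will use the moment method. For each finite abelian $p$-group $G$, the $G$-moment of the cokernel is
$$\bE\bigl(\#\Sur(\cok(A(n)),G)\bigr).$$
By the uniqueness theorems of Wood (non-symmetric and symmetric cases) and Nguyen--Wood (alternating case), the limiting distributions in Theorems \ref{thm_nonsym}, \ref{thm_sym} and \ref{thm_alt} are determined by their moments. Those moments are $1$ for $\M_n$, $|\wedge^2 G|$ for $\mr{Sym}_n$, and $|\mr{Sym}^2 G|$ for $\mr{Alt}_n$, with the obvious modification for odd size. It therefore suffices to show that, for $c>1$, these moments converge to the universal values.

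Choose $d$ with $p^d G=0$. We write
$$\bE\bigl(\#\Sur(\cok A,G)\bigr)=\sum_{F\in\Sur((\bZ/p^d)^n,G)}\bP(FA=0)$$
(taking $F\circ A$ in the symmetric and alternating cases). Then we use the discrete Fourier expansion
$$\bP(FA=0)=|G|^{-n}\sum_{C}\bE\bigl(\zeta^{C(FA)}\bigr),$$
where $C$ runs over the dual objects: $\Hom(G,\bQ/\bZ)^n$, or its symmetric/alternating analogue. This is the framework of \cite{Woo17,NW22}. We treat all three symmetry types at once by writing the relevant sum as $\sum_{F,C}\prod_{i\le j}\bE(\zeta^{\langle \text{pair}_{ij}(F,C), a_{ij}\rangle})$. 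The estimate
$$\bigl|\bE\,\zeta^{x a}\bigr|\le \exp(-c'\al_n)$$
for nonzero $x$ mod $p$ is the only input about the entries.

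The pairs $(F,C)$ are split by Wood's notion of codes and depth. First, for \emph{codes of distance $\ge\delta n$}, meaning $F$ is not supported near a proper subgroup on many coordinates, the standard estimate gives a contribution of $\bigl(1+O(e^{-c'\al_n\delta n})\bigr)$ times the main term. Since $\al_n n=c\log n$, the total error is $n^{O(1)}\cdot n^{-c'c\delta}$ summed over $C$. This step is handled as in \cite{NW22}, but we replace the threshold $\al_n\gg\log n/n$ with an explicit count. Second, for \emph{non-codes} we stratify by the number $w$ of coordinates on which $F$ lands in a proper subgroup $H<G$, and bound $\sum_F \bP(FA=0)$ by $\binom{n}{w}\,|H|^{n-w}\cdots$ times a probability that $A$ has a "sparse kernel-like" structure.

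The main obstacle is the regime $w\le \epsilon n$ with $w$ small, especially $w=1,2,\dots$. There, the bound requires that no small set of rows or columns of $A$ mod $p$ is nearly constant. Specifically, we need
$$\bP\bigl(\text{some } w \text{ columns have all entries determined}\bigr)\approx n^w(1-\al_n)^{(n-w)w}\approx n^{w(1-c)}\to0,$$
and this is exactly where $c>1$ is used and is sharp. The plan here follows \cite{Lee25}. We first prove over $\fp$ (for each symmetry type) that, with probability $1-o(1)$, every nonzero vector $v$ of support size $\le \epsilon n$ satisfies $vA\ne$ a prescribed value. This is done by a union bound over supports of size $s$: the row-exposure bound is $\binom{n}{s}p^s(1-\al_n)^{\,s(n-s)/2}$ or similar, which is summable for $c>1$ when $s\le\epsilon n$. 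The symmetric and alternating cases need a decoupling step, since row $i$ and column $i$ coincide; we handle it by exposing only the entries $a_{ij}$ with $i\in S$, $j\notin S$, which remain independent. Conditioning on this good event, we lift to $\bZ/p^d$ by induction on $d$ (filtering $G$ by $p^kG$) and obtain the moment bound $\bE\#\Sur\to$ the universal moment $+o(1)$ for every $G$. The moment uniqueness theorem then gives (a), (b) and (c). The odd alternating case is handled as in \cite{NW25}, by factoring off the forced $\zp$.
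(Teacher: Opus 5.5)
Your reduction to moments matches the paper's. The gap is in the code part, which you treat as routine; at $\alpha_n=c\log n/n$ it fails. For a code $F$ of distance $\delta n$, the standard estimate bounds each column with nonzero character by $e^{-c'\alpha_n\delta n}=n^{-c'c\delta}$. Here $c'$ comes from the worst case of $\sin^2(\pi r/p^d)$, so the best uniform choice is $c'=2\sin^2(\pi/p^d)$. Summed over the $\lvert G\rvert^n$ characters, these factors compound over the $n$ columns, and all you get is
\[
\Bigl|\,\lvert G\rvert^{n}\,\bP(FA=0)-1\Bigr|\le\bigl(1+(\lvert G\rvert-1)\,n^{-c'c\delta}\bigr)^{n}-1 .
\]
This is not $n^{O(1)}n^{-c'c\delta}$; it is $o(1)$ only when $c'c\delta>1$. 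Since $\delta\le1$, this fails for $c$ near $1$. For instance, for $G=\bZ/p$ with $p\ge5$ one has $2\sin^2(\pi/p)<1$, so it fails for every $c<1/(2\sin^2(\pi/p))$.

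This is the loss that the hypothesis $\alpha_n\ge\Delta\log n/n$ for all $\Delta$ absorbs in \cite{NW22}, and an explicit count cannot remove it. At this scale, each exceptional coordinate must cost a factor $n^{-c''}$ with $c''>1$ to beat the $\binom{n}{w}$ choices. That needs the mean value $1/2$ of $\sin^2$, not its minimum. For a fixed $F$ you only see its empirical distribution, which for a code can be far from uniform. The missing idea is to obtain that mean exactly by summing over the typical coordinates before estimating, rather than bounding $\bP(FA=0)$ one $F$ at a time.

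That is what the paper does. It writes the Hom-moment as a sum over $\mathbf{g}_k=(g_k,h_k)\in G^2$, pairing $F(v_k)$ with the character of the $k$-th column. It takes the sharp per-entry factor $1-2\alpha_n\sin^2$ from Lemmas~\ref{lem: extreme point} and~\ref{lem_ine1}, and stratifies by the set $W$ of pairs occurring at least $\gamma n$ times. The decisive case is $W$ maximal isotropic with a small outlier set $X$ (Proposition~\ref{prop_case3c}). There the frequency constraint is dropped and each $\mathbf{g}_y$ with $y\notin X$ ranges freely over $W$. Lemma~\ref{lem: sin sum} then gives $\sum_{w\in W}\sin^2(\pi tB(\mathbf{g}_x,w)/p^d)=\lvert G\rvert/2$ exactly. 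So each such $y$ contributes at most $\lvert G\rvert(1-c_2\tfrac{\log n}{n}\lvert X\rvert)$, and the total is at most $(1+(\lvert G\rvert-1)n^{-c_3})^n-1\to0$ with $c_3>1$.

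Your $n^{w(1-c)}$ heuristic is the right picture of this case. But the outliers are outliers of the pair $(F,C)$ jointly: in the non-symmetric case $W=K\times K^{\perp}$, and an outlier may come from $C$ alone. An $F$-only code/non-code split does not isolate them.

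Your non-code step also fails as written:
\begin{itemize}
\item With exponent $s(n-s)/2$, already $s=1$ gives $p\,n^{1-c/2}$, which does not tend to $0$ for $c\le2$.
\item The event $vA=b$ imposes only one linear condition per column, so even the exponent $s(n-s)$ needs a sharp anti-concentration argument.
\item An event of probability $1-o(1)$ does not control the expectation of the unbounded quantity $\#\Sur(\cok A(n),G)$.
\item Conditioning on that event destroys the independence the Fourier expansion relies on.
\item The proposed induction on $d$ is never specified.
\end{itemize}
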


By reduction modulo $p$, we obtain the sharp universality result for random symmetric and alternating matrices over $\fp$. See \cite[(20), (23), (26)]{FG15} or \cite[Theorems 1.10 and 1.13]{NW25} for the formulas of the rank distribution of random symmetric and alternating matrices over $\fp$.

\begin{cor} \label{main thm_fp}
Let $c>1$ be a constant and $\al_n = \frac{c \log n}{n}$.
\begin{enumerate}[label=(\alph*)]
    \item Suppose that $A(n)$ is an $\al_n$-balanced random matrix in $\mr{Sym}_n(\fp)$ for each $n \ge 1$. Then for every nonnegative integer $k$,
    \begin{equation} \label{eq_sym_fp}
    \lim_{n \to \infty} \bP(\rank(A(n)) = n-k) 
    = p^{-\frac{k(k+1)}{2}} \frac{\prod_{i=k+1}^{\infty} (1-p^{-i})}{\prod_{i=1}^{\infty}(1-p^{-2i})}.
    \end{equation}

    \item Suppose that $A(n)$ is an $\al_n$-balanced random matrix in $\mr{Alt}_n(\fp)$ for each $n \ge 1$. Then for every nonnegative integer $k$,
    \begin{equation} \label{eq_alt_fp}
    \begin{split}
    \lim_{n \to \infty} \bP(\rank(A(2n)) = 2n-2k) 
    & = p^{-k(2k-1)}\frac{\prod_{i=k}^{\infty} (1-p^{-2i-1})}{\prod_{i=1}^{k}(1-p^{-2i})}, \\
    \lim_{n \to \infty} \bP(\rank(A(2n+1)) =2n-2k) 
    & = p^{-k(2k+1)}\frac{\prod_{i=k+1}^{\infty} (1-p^{-2i-1})}{\prod_{i=1}^{k}(1-p^{-2i})}.
    \end{split}
    \end{equation}
\end{enumerate}
\end{cor}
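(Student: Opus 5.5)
The statement to prove is Corollary \ref{main thm_fp}. It follows from Theorem \ref{main thm} by reduction modulo $p$; the only inputs are a coupling of the models and the known formulas for the rank distribution.

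\textbf{Lifting and reduction.} Given an $\al_n$-balanced $A(n)$ in $\mr{Sym}_n(\fp)$ (resp. $\mr{Alt}_n(\fp)$), lift it entrywise to a random matrix $\tilde A(n)$ in $\mr{Sym}_n(\zp)$ (resp. $\mr{Alt}_n(\zp)$) via the Teichmüller (or any fixed) section $\fp \to \zp$.
\begin{itemize}
\item The independent entries stay independent.
\item Balancedness is defined through reduction mod $p$, so $\tilde A(n)$ is again $\al_n$-balanced with $\al_n = \frac{c\log n}{n}$.
\item Since $\cok(\tilde A(n))/p\,\cok(\tilde A(n)) \cong \cok(A(n))$ over $\fp$, we get
\[
n - \rank(A(n)) = \dim_{\fp} \cok(\tilde A(n)) \otimes \fp ,
\]
which is the $p$-rank of $\cok(\tilde A(n))$.
\end{itemize}
So $\bP(\rank(A(n)) = n-k)$ is the probability that the cokernel has $p$-rank $k$, i.e.\ $\bP(\cok(\tilde A(n)) \in \mathcal{G}_k)$, where $\mathcal{G}_k$ is the set of abelian $p$-groups (possibly with a free $\zp$ part) of $p$-rank $k$.

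\textbf{Interchanging limit and sum.} The class $\mathcal{G}_k$ is infinite, so pointwise convergence from Theorem \ref{main thm} does not by itself give convergence of the sum. I would use Fatou plus total mass.
\begin{itemize}
\item The limiting measures in \eqref{eq_main_sym}, \eqref{eq_main_alt_even} and \eqref{eq_main_alt_odd} are the known Cohen--Lenstra-type distributions of the corresponding symmetry type, and each is a probability measure, with total mass $1$ on finite groups (resp. on groups $\zp \times H$ in the odd alternating case).
\item Hence for every finite set $F$ of isomorphism classes, $\liminf_n \bP(\cok \in F) \ge \mu(F)$, where $\mu$ is the limiting measure.
\item Since $\mu$ has mass $1$, this gives tightness, and therefore $\lim_n \bP(\cok \in S) = \mu(S)$ for every set $S$. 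In particular this holds for $S = \mathcal{G}_k$.
\end{itemize}

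\textbf{Identifying the constants.} It remains to show that $\mu(\mathcal{G}_k)$ equals the right-hand sides of \eqref{eq_sym_fp} and \eqref{eq_alt_fp}. One clean route is to compare with the case of uniform (Haar) entries, which is $\frac12$-balanced, so Theorems \ref{thm_sym} and \ref{thm_alt} apply. For Haar $\zp$-matrices the reduction mod $p$ is uniform on $\mr{Sym}_n(\fp)$ (resp. $\mr{Alt}_n(\fp)$). The limiting rank laws of uniform symmetric and alternating matrices over $\fp$ are exactly the formulas cited from \cite[(20), (23), (26)]{FG15} and \cite[Theorems 1.10 and 1.13]{NW25}. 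By the previous step applied to Haar entries, $\mu(\mathcal{G}_k)$ equals those limits, which gives the stated values. In the alternating case the same applies with $\mu$ computed over the groups in $S_p$ and corank $2k$, respectively $2k+1$ counting the $\zp$ factor in odd size.

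\textbf{Main point of care.} The only step beyond bookkeeping is the tightness argument, which relies on the limiting distributions having total mass one. This is classical, and is implicit in \cite{Woo17} and \cite{NW25}, so I expect no real obstacle there.
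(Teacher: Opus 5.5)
Your proposal is correct and follows the paper's route: the paper derives Corollary \ref{main thm_fp} from Theorem \ref{main thm} simply ``by reduction modulo $p$'' and cites \cite{FG15} and \cite{NW25} for the explicit rank formulas. You also spell out what the paper leaves implicit, namely the entrywise lift to $\zp$ and the passage from pointwise convergence of cokernels to the infinite class $\mathcal{G}_k$, which uses the fact that the limiting measure has total mass one.
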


Let $\Gamma \in G(n,q)$ be an Erdős–Rényi random graph on $n$ vertices, where each edge is included independently with probability $0<q<1$. Let $S_{\Gamma}$ denote the sandpile group (Jacobian) of $\Gamma$, and let $S_{\Gamma, p}$ be the Sylow $p$-subgroup of $S_{\Gamma}$. 
As an application of universality for random symmetric matrices over $\zp$ (Theorem \ref{thm_sym}), Wood \cite[Theorem 1.1]{Woo17} determined the limiting distribution of $S_{\Gamma, p}$ for $\Gamma \in G(n,q)$. We generalize this result to a broader class of Erdős–Rényi random graph sequences. 

\begin{thm} \label{main thm_graph}
Let $c>1$ be a constant, $\al_n = \frac{c \log n}{n} \le 1/2$, $\be_n \in [\al_n, 1-\al_n]$ and let $\Gamma(n) \in G(n, \be_n)$ be an Erdős–Rényi random graph. Then for every finite abelian $p$-group $H$,
\begin{equation} \label{eq_main_graph}
\lim_{n \to \infty} \bP(S_{\Gamma(n), p} \cong H) = \frac{\# \{ \text{symmetric, bilinear, perfect } \phi : H \times H \to \bC^* \}}{|H||\Aut(H)|} \prod_{i=1}^{\infty} (1-p^{1-2i}).
\end{equation}
\end{thm}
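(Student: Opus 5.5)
We will derive Theorem \ref{main thm_graph} from the symmetric case of Theorem \ref{main thm}, or more precisely from the moment estimates underlying it, following Wood's strategy in \cite{Woo17}.

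The starting point is the standard presentation of the sandpile group. Let $L = L(n)$ be the graph Laplacian of $\Gamma(n)$. Then $S_{\Gamma(n)}$ is the torsion part of $\cok(L)$, and, up to the single kernel direction spanned by the all-ones vector, it is the cokernel of the reduced Laplacian. The off-diagonal entries of $L$ are $-\mathbf{1}_{ij}$, where the edge indicators are independent Bernoulli$(\be_n)$. Since $\be_n \in [\al_n, 1-\al_n]$, each entry is $\al_n$-balanced modulo $p$; this holds for $p=2$ as well, because the entry takes only the two values $0$ and $-1$. However, the diagonal entries are determined by the row sums, so $L$ is not an $\al_n$-balanced symmetric matrix, and the theorem cannot be applied directly.

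We therefore use the moment method, as in \cite{Woo17}. It suffices to show that for every finite abelian $p$-group $G$,
\[
\bE\bigl(\#\Sur(S_{\Gamma(n),p}, G)\bigr) \to |\wedge^2 G|,
\]
which are exactly the symmetric moments. These moments determine the distribution, by the uniqueness theorem for moments used in \cite{Woo17}. As in Wood's paper, surjections from $S_\Gamma$ correspond to surjections $F \colon \bZ^n \to G$ killing the columns of $L$ and vanishing on the all-ones vector, that is, on the sum-zero quotient. We then write $\bP(FL=0)$ via discrete Fourier analysis over pairs $(F,C)$, where $C \colon \bZ^n \to \mr{Hom}(G,\bQ/\bZ)$. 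For the Laplacian, each edge $\{i,j\}$ contributes a factor depending on $C(v_i - v_j)$ evaluated against $F(v_i - v_j)$. So the Laplacian structure only replaces the symmetric-matrix phases $\langle C_i,F_j\rangle + \langle C_j,F_i\rangle$ with differences. This is precisely the sense in which the paper's unified framework treats the graph model alongside symmetric matrices.

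The key steps, in order, are as follows.
\begin{enumerate}
\item Reduce to the moment identity above, restricted to $F$ that are codes, in the sense of \cite{Woo17}. Such $F$ give the main term $|\wedge^2 G|$ via the count of symmetric pairings, exactly as in the symmetric case of Theorem \ref{main thm}.
\item Bound the contribution of non-code $F$ and of non-generic $C$ using the sharp estimates developed for part (b) of Theorem \ref{main thm}, applied with balancedness $\al_n = \frac{c\log n}{n}$.
\item Check that the depth/robustness stratification used for symmetric matrices transfers to the difference structure $v_i - v_j$. Restricting to sum-zero $F$ changes only finitely many constants.
\end{enumerate}

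The main obstacle is the extreme non-code regime: $F$ supported, modulo a subgroup, on very few coordinates, for instance a single vertex. At this scale the relevant probability behaves like $(1-\al_n)^{n} \approx n^{-c}$, and this must be summed against roughly $n$ choices of the exceptional coordinates. The sum tends to $0$ exactly because $c>1$. For the graph this corresponds to the absence of isolated vertices, and at $c=1$ isolated vertices appear and the threshold is sharp. I would handle this regime by directly estimating the probability that a vertex, or small vertex set, has all its incident edges in a prescribed residue pattern. Because $\be_n \le 1-\al_n$, this probability is at most $(1-\al_n)^{n-k}$ for a set of $k$ vertices, and the resulting union bound over small sets is summable when $c>1$. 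The intermediate regimes would then be interpolated by the same counting lemmas used for the symmetric case.
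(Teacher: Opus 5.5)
\textbf{Verdict: there is a genuine gap.} Your reduction to the moments $\lvert\wedge^2 G\rvert$ matches the paper. So does your observation that the Laplacian turns the symmetric phases into $-(g_k-g_l)\cdot(h_k-h_l)$; this is the paper's pairing $B_4$. But the one hard step, the error term at $c>1$, is never supplied.

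\textbf{The deferred estimates do not exist.} You defer the error term to ``sharp estimates developed for part (b)'' in a code/non-code/depth framework. The paper proves part (b) without any code/non-code decomposition. That decomposition is what the earlier results used, and it needed $\alpha_n\gg\log n/n$. Instead, the paper stratifies the tuples $(\mathbf g_1,\dots,\mathbf g_n)\in(G^2)^n$, i.e.\ the values of $F$ and $C$ jointly, by the frequent set $W=\{\mathbf g: i_{\mathbf g}\ge\gamma n\}$. For $B_4$, a maximal isotropic $W$ is a coset $\mathbf g'+H$ with $H$ a maximal isotropic subgroup of $P$-vanishing type for $B_2$ (Lemma~\ref{lem: graph lemma coset}). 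The cases $\mathcal C_1,\mathcal C_2,\mathcal C_{3,a},\dots,\mathcal C_{3,d}$ of Theorem~\ref{thm: general} each need their own argument: an entropy count for $\mathcal C_2$, and $\Theta(n^2)$ or $\Theta(n^2/\log n)$ nonvanishing pairs for $\mathcal C_1,\mathcal C_{3,a},\mathcal C_{3,b}$. ``Interpolating by the same counting lemmas'' does not replace these.

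\textbf{Your concrete estimate for the critical regime fails.}
\begin{itemize}
\item A bound $(1-\alpha_n)^{n-k}\approx n^{-c}$ for a set of $k$ exceptional vertices must be summed over $\binom nk$ sets (times $\lvert G\rvert^{O(k)}$ labels). This diverges already at $k=2$ when $c<2$. You need roughly $n^{-c'k}$ with $c'>1$.
\item $FL=0$ does not force all $k(n-k)$ incident edges into a prescribed pattern. For each bulk vertex $y$ it imposes a single condition $\sum_{x\in X}u_x\mathbf 1_{xy}=0$, with $u_x\ne0$ in a suitable quotient of $G$. Conditioning on all but one edge bounds its probability only by $1-\alpha_n$, not $1-k\alpha_n$.
\item Applying Lemma~\ref{lem_ine1} pair by pair is no better. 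It gives decay no stronger than $n^{-8c_1/p^{2d}}$ per exceptional coordinate, which is useless when $8c\le p^{2d}$.
\end{itemize}

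\textbf{The missing idea is an exact averaging identity.} For $\mathbf g\notin W$ and $t\in R^\times$, $\sum_{\mathbf g'\in W}\sin^2(\pi tB(\mathbf g,\mathbf g')/p^d)=\lvert G\rvert/2$ (Lemma~\ref{lem: sin sum}). This rests on the non-constant affine property, condition~\ref{bilinear_cond2}, which must be verified for $B_4$. Summing over the values $\mathbf g_y\in W$ at each bulk coordinate then bounds the contribution of each $y\notin X$ by $\lvert G\rvert(1-c_2\frac{\log n}{n}\lvert X\rvert)$. This is an additive contribution of about $\alpha_n$ from every exceptional coordinate. Hence $\sum_X\bigl((\lvert G\rvert-1)n^{-c_3}\bigr)^{\lvert X\rvert}\le(1+(\lvert G\rvert-1)n^{-c_3})^n-1\to0$ (Proposition~\ref{prop_case3c}). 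Note that this averages over $F$- and $C$-values together; it is not a bound on $\bP(FL=0)$ for a single non-code $F$. The linearization holds only for $\lvert X\rvert\le\gamma_2 n/\log n$, which is why larger exceptional sets are handled separately.

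\textbf{Two smaller slips.}
\begin{itemize}
\item The main term counts alternating pairings, $\Hom(\wedge^2H,\mathbb Q/\mathbb Z)$, not symmetric ones.
\item Imposing $F(\mathbf 1)=0$ does not parametrize $\Hom(S_\Gamma,G)$ when $p\mid n$. One should divide by $\lvert G\rvert$, as the paper does via $\tilde F=F\oplus\sigma$ with a uniform diagonal.
\end{itemize}
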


\begin{rmk} \label{rmk_graph threshold}
It is well-known that $\be_n = \frac{\log n}{n}$ is a sharp threshold for the connectivity of the random graph $\Gamma \in G(n,\be_n)$. Indeed, Erdős and Rényi (see \cite[Theorem 7.3]{Bol01}) proved that if $\be_n = \frac{\log n + c}{n}$ for a constant $c \in \bR$, then 
$$
\lim_{n \to \infty} \bP(\Gamma \text{ is connected}) = e^{-e^{-c}}.
$$
Our result shows that the distribution of the sandpile group of Erdős–Rényi random graph also has a sharp threshold at $\frac{\log n}{n}$ since a graph is connected if and only if its sandpile group is finite.
\end{rmk}

We write $\Hom(A,B)$ (resp. $\Sur(A,B)$) for the set of all homomorphisms (resp. surjective homomorphisms) from $A$ to $B$.
For a finite abelian $p$-group $G$, the exterior power $\wedge^2 G$ is the quotient of $G \otimes G$ by the subgroup generated by elements of the form $g \otimes g$. Similarly, the symmetric power $\mr{Sym}^2 G$ is the quotient of $G \otimes G$ by the subgroup generated by elements of the form $g_1 \otimes g_2 - g_2 \otimes g_1$. Note that when $G$ is a $p$-group of type $\lambda = (\lambda_1 \ge \cdots \ge \lambda_r)$ (i.e. $G \cong \bZ/p^{\lambda_1}\bZ \times \cdots \times \bZ/p^{\lambda_r}\bZ$) and $\lambda' = (\lambda'_1 \ge \cdots \ge \lambda'_{\lambda_1})$ is the conjugate partition of $\lambda$, 
$$
\lt \wedge^2 G \rt = p^{\sum_{j=1}^{\lambda_1} \frac{\lambda'_j(\lambda'_j-1)}{2}}
$$
and
$$
\lt \mr{Sym}^2 G \rt = p^{\sum_{j=1}^{\lambda_1} \frac{\lambda'_j(\lambda'_j+1)}{2}}.
$$

One of the key ingredients of the proofs of Theorems \ref{thm_nonsym}, \ref{thm_sym} and \ref{thm_alt} is the use of (surjective) moments of random groups. Let $(X_n)_{n \ge 1}$ and $(Y_n)_{n \ge 1}$ be sequences of random finitely generated $\zp$-modules. Suppose that
$$
\lim_{n \to \infty} \bE(\# \Sur(X_n, G)) = \lim_{n \to \infty} \bE(\# \Sur(Y_n, G)) = M_G
$$
for all finite abelian $p$-group $G$ and the quantities $M_G$ do not grow too rapidly. Then $X_n$ and $Y_n$ have the same limiting distribution. 
The proofs of our main results (Theorems \ref{main thm} and \ref{main thm_graph}) are also based on this moment method. More precisely, they follow from the next theorems together with \cite[Theorem 3.1]{Woo19} and \cite[Theorem 4.1]{NW25}.

\begin{thm} \label{main thm_moment}
Let $c>1$ be a constant and $\al_n = \frac{c \log n}{n}$. Suppose that $A(n)$ is an $\al_n$-balanced random matrix in $\mf{M}_n(\zp)$ for each $n \ge 1$, where $\mf{M}_n$ is one of $\M_n$, $\mr{Sym}_n$ or $\mr{Alt}_n$. Then for every finite abelian $p$-group $G$,
$$
\lim_{n \to \infty} \bE(\# \Sur(\cok(A(n)), G)) = \left\{\begin{matrix}
1 & \text{if} \quad \mf{M}_n = \M_n, \\
\lt \wedge^2 G \rt & \text{if} \quad \mf{M}_n = \mr{Sym}_n, \\
\lt \mr{Sym}^2 G \rt & \text{if} \quad \mf{M}_n = \mr{Alt}_n.
\end{matrix}\right.
$$
\end{thm}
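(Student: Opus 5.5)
Following Wood's approach, I would reduce to a sum over maps. Let $G$ have exponent dividing $p^d$. Then $\Sur(\cok(A),G)$ is in bijection with surjections $F\colon (\bZ/p^d\bZ)^n \to G$ such that $F A = 0$; for the symmetric and alternating cases this means $F\circ A = 0$, with $A$ acting on columns. Hence
$$
\bE(\#\Sur(\cok(A(n)),G)) = \sum_{F \in \Sur((\bZ/p^d\bZ)^n, G)} \bP(FA(n)=0).
$$
Write $F = (v_1,\dots,v_n)$ with $v_i \in G$. Independence of the entries (or of the upper triangular entries) turns $\bP(FA=0)$ into a product over columns, respectively over pairs $i\le j$, of probabilities of the form $\bP(\sum_i a_{ij}v_i = 0)$ or $\bP(a_{ij}(v_i\otimes v_j \pm v_j\otimes v_i)=0)$ in a suitable quotient. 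I would use the Fourier expansion
$$
\bP(FA=0) = |G|^{-n}\sum_{C} \bE\,\zeta^{C(FA)}
$$
over characters $C$ of $\Hom((\bZ/p^d)^n,G)$, equivalently $n$-tuples in $\hat G$, together with the symmetrized version in the symmetric and alternating cases. In these cases the main term should come from characters $C$ that, paired with $F$, give a "symmetric" or "alternating" form on $G$. This produces the factors $|\wedge^2 G|$ and $|\mr{Sym}^2 G|$, exactly as in \cite{Woo17} and \cite{NW25}. I would set up a single framework: a random linear map $X\colon \bF\to V$ where, in each model, the entries multiply tensors $w_{ij}$ built from $F$ and $C$.

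The key step is to split $F$ by a robustness notion, as in \cite{Woo17,NW22}: call $F$ a \emph{code} of distance $\delta n$ if for every subset $\sigma$ of size $<\delta n$ the restriction $F|_{[n]\setminus\sigma}$ is still surjective. With $\al_n=c\log n/n$ one can take $\delta$ only of order $1/\log n$ or even $\delta n$ bounded. For codes, each character $C$ with many nonzero "effective" coordinates satisfies $|\bE\,\zeta^{\cdots}| \le \exp(-\al_n\cdot \#\text{coords})$. Summing over $C$ stratified by support size $m$ gives a contribution of order $\binom{n}{m}|G|^m e^{-c m\log n \cdot(\text{const})}$. This is where $c>1$ must be used sharply: for support size $1$ we need $n\cdot e^{-\al_n n(1-o(1))}=n^{1-c+o(1)}\to0$. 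I would therefore use the exact per-coordinate bound $|\bE\zeta^{a x}|\le 1-\al_n\cdot(\text{something}) $ with the optimal constant. The natural bound is $1-(1-\cos(2\pi/p))\al_n$, which loses a constant, so for the sharp threshold I would instead handle small supports ($m$ bounded, or $m\le n/\log n$) by direct combinatorial probability estimates. These would be done rather than by Fourier analysis, e.g. $\bP(\text{a given column is }0 \bmod p)\le (1-\al_n)^{n-o(n)} = n^{-c+o(1)}$, as in Lee's $\fp$ argument \cite{Lee25}. Large supports would be handled by the Halász-type exponential bound.

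For non-codes, I would follow the depth decomposition of \cite{Woo19,NW22}. Classify $F$ by the largest subgroup $H<G$ onto which $F$ restricted to many coordinates maps, and bound $\#\{F\}\cdot \bP(FA=0)$ by $\binom{n}{\ell}|G|^{\ell}\cdot(\text{prob.\ that }\ell\text{ rows/columns are forced into a coset})$. The total is shown to be $o(1)$, again using $(1-\al_n)^{n}\approx n^{-c}$ to beat the $n^{\ell}$ combinatorial factor. The main obstacle is precisely this non-code / small-support regime at the threshold $\al_n n = c\log n$. Previous proofs had a $\log n$ slack here, and each entry can be degenerate with probability $1-\al_n$, so the bounds must be nearly exact. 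I would first try an induction on the depth with the refined estimate $\bP(\text{specified } \ell \text{ coordinates degenerate})\le (1-\al_n)^{\ell(n-\ell)}$, and verify that $\sum_\ell \binom{n}{\ell}|G|^\ell n^{-c\ell(1-\ell/n)}\to0$ for $\ell\le n/2$ when $c>1$.
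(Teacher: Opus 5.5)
There is a genuine gap, and it sits exactly where $c>1$ has to be used.

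\textbf{The sharp constant is never obtained.} Your scheme fixes $F$ (code or non-code) and bounds each Fourier term pointwise. Pointwise, however, one only has $\lt 1-\al_n+\al_n\ze^r\rt\approx 1-2\al_n\sin^2(\pi r/p^d)$. This is weaker than $1-\al_n$ whenever $\sin^2(\pi r/p^d)<\frac12$. So a configuration deviating in $m$ coordinates only gets a factor $n^{-\theta cm}$ with $\theta<1$, and that does not beat $\binom{n}{m}\lt G\rt^m$ when $c$ is near $1$.

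You notice this, but your fix does not address it. An estimate such as $\bP(\text{a column}\equiv 0 \bmod p)\le(1-\al_n)^n$ says nothing about $\bE\,\ze^{C(FA)}$ for a code $F$ and a small-support $C$. The code property is also too weak. Distance $\delta n$ gives at best a per-column Fourier error $n^{-2c\delta}$, and $(1+\lt G\rt n^{-2c\delta})^n\not\to 1$ unless $2c\delta>1$. That is hopeless for $\delta\sim 1/\log n$, as you suggest.

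\textbf{What the paper does instead.} The missing idea is an \emph{averaging} mechanism.
\begin{enumerate}
\item The paper decomposes jointly in $(F,C)$, i.e.\ in the tuples $\mb{g}_k=(g_k,h_k)$, according to the set $W$ of values occurring at least $\ga n$ times.
\item It reduces to two-point laws (Lemma~\ref{lem: extreme point}).
\item In the critical case $\mc{C}^W_{3,c}$ there is a set $X$ of at most $\ga_2 n/\log n$ indices outside a maximal isotropic $W$. For each $y\notin X$ the paper sums freely over $\mb{g}_y\in W$.
\item It then uses the exact identity $\sum_{\mb{g}_y\in W}\sin^2(\pi tB(\mb{g}_x,\mb{g}_y)/p^d)=\lt G\rt/2$ (Lemma~\ref{lem: sin sum}, coming from condition (ii) on $B$).
\end{enumerate}
This makes the lossy factor $2\sin^2$ equal to $1$ on average. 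Each $y$ then contributes $\lt G\rt(1-c_2\frac{\log n}{n}\lt X\rt)$, and the total is at most $(1+(\lt G\rt-1)n^{-c_3})^n-1\to 0$.

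In an $F$-first scheme you would need at least three things your plan lacks:
\begin{enumerate}
\item an equidistribution property of $F$, not just a distance property;
\item an entropy count for the non-equidistributed $F$;
\item separate counting for the intermediate regimes, as in the paper's $\mc{C}^W_{3,a}$ and $\mc{C}^W_{3,b}$.
\end{enumerate}

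\textbf{The non-code estimate is wrong as stated.} Suppose $F$ maps $[n]\setminus\sigma$ into a proper subgroup $H$, with $\lt\sigma\rt=\ell$. Then $FA=0$ only forces $\sum_{i\in\sigma}a_{ij}\bar v_i=0$ in $G/H$ for each column $j$.
\begin{itemize}
\item Once $\ell\al_n\to\infty$, this event has probability of constant order per column, not $(1-\al_n)^{\ell}$. For example, it tends to $1/p$ if $G/H\cong\bZ/p\bZ$ and all $\bar v_i$ equal a fixed nonzero element.
\item Hence your bound $(1-\al_n)^{\ell(n-\ell)}$ is false for $\ell\gg n/\log n$. Your sum $\sum_{\ell\le n/2}\binom{n}{\ell}\lt G\rt^\ell n^{-c\ell(1-\ell/n)}$ therefore does not control that range, which needs an entropy bound as in Proposition~\ref{prop_case2}.
\item For small $\ell$, the correct sharp bound, roughly $(1-\ell\al_n)^n$, again comes from averaging $\sin^2$ over the characters of $G/H$, not from zero-entry events.
\item Your ``induction on depth'' would also need the sharp code estimate for the $H$-part, which is exactly the unresolved issue above.
\end{itemize}

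\textbf{Symmetric and alternating cases.} For $\mr{Sym}_n$ and $\mr{Alt}_n$, $\bP(FA=0)$ does not factor over pairs $i\le j$; only the Fourier terms do. Moreover, the error terms near the nontrivial main-term characters, which produce $\lt\wedge^2 G\rt$ and $\lt\mr{Sym}^2 G\rt$, require the same sharp averaging. Your proposal does not treat them.
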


\begin{thm} \label{main thm_moment graph}
Let $c>1$ be a constant, $\al_n = \frac{c \log n}{n} \le 1/2$, $\be_n \in [\al_n, 1-\al_n]$ and let $\Gamma(n) \in G(n, \be_n)$ be an Erdős–Rényi random graph. Then for every finite abelian $p$-group $G$,
$$
\lim_{n \to \infty} \bE(\# \Sur(S_{\Gamma(n), p}, G)) = 
\lt \wedge^2 G \rt.
$$
\end{thm}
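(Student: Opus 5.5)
We follow Wood's strategy for sandpile groups \cite{Woo17}: reduce to moments of a random symmetric matrix, then reuse the machinery behind Theorem \ref{main thm_moment} in the symmetric case. Let $L = L(n)$ be the graph Laplacian of $\Gamma(n)$ and let $R = \zp/p^{e}$, where $p^e$ kills $G$. We use the standard description of $S_\Gamma$ as a quotient of the sum-zero lattice. From it, surjections $S_{\Gamma,p} \to G$ correspond to surjections $F : \zp^n \to G$ with $F(1,\dots,1)=0$ and $F L = 0$; this holds whenever $\Gamma$ is connected. On the disconnected event $S_\Gamma$ is infinite and the formula changes. So we first dispose of that event: with $\be_n \ge c\log n/n$ and $c>1$, the graph is connected with probability $1-o(1)$, and we must show the contribution there is negligible in expectation. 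Since $\#\Sur(S_{\Gamma,p},G)$ is unbounded on that event, we bound the expectation directly via the sum below rather than by probability alone.

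The formula we aim for is
\begin{equation*}
\bE\,\#\Sur(S_{\Gamma,p},G) = \frac{1}{|G|}\sum_{F} \bP(FL = 0) + o(1),
\end{equation*}
where $F$ ranges over $\Hom(\zp^n, G)$ with $F\mathbf{1}=0$ that are surjective after restriction to the sum-zero sublattice. This follows Wood's Laplacian argument. The columns of $L$ are determined by the independent off-diagonal entries $x_{ij}$, each Bernoulli$(\be_n)$ and hence $\al_n$-balanced mod $p$, with the diagonal forced. Thus $FL = 0$ amounts to the system $\sum_{i<j} x_{ij}(F e_i - F e_j)\otimes(e_i - e_j)$ vanishing in an appropriate quotient. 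This is exactly the structure handled by the symmetric-case analysis, with $F e_i - F e_j$ replacing $F e_i$.

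We then classify the $F$ by how spread out they are, with the same three regimes used in the proof of Theorem \ref{main thm_moment} for $\mr{Sym}_n$.
\begin{enumerate}
\item \textbf{Codes.} $F$ is not concentrated on a small set after deleting any $\delta n$ coordinates. Via Fourier expansion over $\Hom(G\otimes G\text{-type quotient}, \bQ/\bZ)$ this gives $\bP(FL=0) = (1+o(1))|G|\,|\wedge^2 G|/|G|^{n}$, summing to $|\wedge^2 G|$. The translation invariance (the $F\mathbf{1}=0$ condition and the extra factor of $|G|$) matches Wood's count.
\item \textbf{Non-codes with moderate depth.} Use the depth/robustness stratification, with the sharper counting needed at $\al_n = c\log n/n$.
\item \textbf{Very concentrated $F$.} $F$ is constant off a set of size $o(n/\log n)$. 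Here $FL=0$ essentially forces a vertex, or a small vertex set, whose incident edges all have endpoints in the same $F$-class.
\end{enumerate}

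The main obstacle is the third regime. There the bound has to come from vertex-degree considerations. The probability that a fixed vertex has no edge to the complementary class is about $(1-\be_n)^{n} \le n^{-c}$, and since $c>1$ this beats the union over $n$ vertices by a factor $n^{1-c}\to0$. This is exactly where $c>1$ is used, mirroring connectivity.

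The case $\be_n$ near $1-\al_n$ needs separate care, since then the entries are balanced only through the probability of being $0$. We handle it by the same estimates, because $\al_n$-balancedness is symmetric in which value is likely. We will try to import the symmetric-case lemmas verbatim by viewing $L$ as a symmetric matrix with independent balanced upper entries plus a diagonal that is a deterministic function of them. If that fails for the small-support regime, we redo the isolated-vertex-type estimate by hand.
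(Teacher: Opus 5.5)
Your reduction step is sound, and a bit more direct than the paper's uniform-diagonal trick: Fourier-expanding $FL=\sum_{i<j}x_{ij}(Fe_i-Fe_j)\otimes(e_i-e_j)$ gives exactly \eqref{eq: the moment formula for graph}, i.e.\ the pairing $B_4$. Two small corrections. First, $\#\Hom(S_\Gamma,G)=|G|^{-1}\#\{F:FL=0\}$ holds for every graph, since $\cok L\cong S_\Gamma\oplus\bZ$, so the disconnected event needs no separate treatment. Second, $F\mathbf 1=0$ is not a transversal of the constant maps when $p\mid n$, and it double-counts against your extra factor $1/|G|$.

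The genuine gap is in regimes 1 and 2, which is exactly where the sharp threshold is decided. Take codes in Wood's sense: robust surjectivity after deleting $\delta n$ coordinates, with $\delta$ small and fixed. Then your estimate $\bP(FL=0)=(1+o(1))|G||\wedge^2G|/|G|^n$ is false for $c$ near $1$. Concretely, take $\beta_n=\alpha_n$ and a code $F$ that is constant on a set $A$ of $n/2$ vertices and equidistributed on $[n]\setminus A$. For $l\in A$, the constraint $(FL)_l=0$ involves only edges from $l$ to $[n]\setminus A$. These edge sets are disjoint over $l$, and each constraint holds with probability roughly $|G|^{-1}+(1-\beta_n)^{n/2}\approx |G|^{-1}+n^{-c/2}$. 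So $|G|^n\bP(FL=0)$ is inflated by about $\exp(\Theta(n^{1-c/2}))$ when $c<2$.

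On the Fourier side, for fixed $F$, a character $h$ supported at one vertex $l$ has modulus about $\exp\bigl(-2\beta_n\sum_k\sin^2(\pi (g_k-g_l)\cdot h/p^d)\bigr)$. This is as small as $n^{-c}$ only when the $g_k$ are essentially equidistributed; it is precisely why \cite{NW22} need $\alpha_n\gg\log n/n$. Shrinking the class of codes to nearly equidistributed $F$ would push a large family into regime 2, where you offer only ``sharper counting''. Your regime 3 treats concentrated $F$, whereas the bottleneck here is small-support $h$ against spread-out $F$. Also, the paper does not prove the symmetric case via codes and non-codes, so there are no such lemmas to import.

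The missing idea is to stop fixing $F$. The paper treats $\mb g_k=(g_k,h_k)\in G^2$ jointly and lets $W$ be the set of pairs occurring at least $\gamma n$ times. It then splits into three cases:
\begin{itemize}
\item $\mathcal C_1$: $W$ not isotropic, giving $\Theta(n^2)$ nontrivial entries.
\item $\mathcal C_2$: $W$ isotropic with $|W|<|G|$, handled by entropy counting with the trivial bound.
\item $\mathcal C_3$: $W$ a maximal isotropic coset.
\end{itemize}
In the decisive case $\mathcal C_{3,c}$, let $X$ be the small set of indices outside $W$. The paper sums over $\mb g_y\in W$ for every $y\notin X$ \emph{before} estimating. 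Condition (ii) and Lemma \ref{lem: sin sum} make the average of $\sin^2$ exactly $1/2$, so each index of $X$ costs $n^{-c_2}$ with $c_2>1$. This beats the $n$ choices of its position and gives $(1+(|G|-1)n^{-c_3})^n-1\to0$. Your bad code lands in this case (with $W=G\times\{0\}$, $X=\{l\}$) and is harmless, because it is averaged together with all other $F$. For the graph, the proof then reduces to three steps: check (i)--(iii) for $B_4$ via Lemma \ref{lem: graph lemma coset}; compute the main term $\sum_{H\le G}|\wedge^2H|$ via Proposition \ref{prop: main terms for graph}; and Möbius-invert the Hom-moments.
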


Previous results, such as those of Wood \cite{Woo17}, \cite{Woo19} and Nguyen--Wood \cite{NW25} (as well as most subsequent works following their approach), analyze $F \in \Sur(R^n,G)$ by decomposing the space into “code” and “non-code” contributions. Within this framework, the main term arises from the code part, while the non-code part is treated as an error term and bounded accordingly.
However, the moment expressions are given by sums of $\ze^{C(F(A(n)))}$ where $C \in \Hom(\Hom(R^n, G), R)$. This indicates that the interaction between $C$ and $F$ plays a decisive role, and that a refined analysis of their relationship is essential in order to minimize the resulting error terms. Our approach is based precisely on exploiting this structural interaction, which allows us to sharpen the error analysis and ultimately obtain an optimal threshold.

This perspective motivates a change in the choice of moments.
Most existing universality results (including Theorems \ref{thm_nonsym}, \ref{thm_sym} and \ref{thm_alt}) are proved by computing \emph{Sur-moments}. In contrast, we work with the \emph{Hom-moments}
$$
\bE(\#\Hom(\cok(A(n)),G)),
$$
which are more tractable in our argument exploiting the relationship between $F$ and $C$. 
Note that this is purely a methodological choice: Hom-moments and Sur-moments are equivalent by the relations
$$
\#\Hom(X,G)=\sum_{K\le G}\#\Sur(X,K) \text{ and } \#\Sur(X,G)=\sum_{K\le G}\mu(K,G)\#\Hom(X,K),
$$
where $\mu(K,G)$ denotes the Möbius function of the subgroup lattice. 

In addition, our approach yields a unified proof of universality for several classes of random $p$-adic matrices, thereby addressing \cite[Open Problem 3.11]{Woo22}. In each model, the main difficulty is essentially the same: one must control the error terms in the relevant moment computations. Rather than treating the different cases separately, we express the Hom-moments in a common general form and establish error bounds that apply uniformly across all cases considered in our paper. 

We expect that our method can be applied to determine the sharp threshold for the existing universality results on cokernels of random $p$-adic matrices, including the joint distribution of multiple cokernels (\cite[Corollary 1.8]{CY23}, \cite[Theorem 1.1]{NVP24}) and the distribution of cokernels of $p$-adic Hermitian matrices \cite[Theorem 1.6]{Lee23}. A more challenging problem is to determine the sharp threshold for global universality results. Nguyen and Wood \cite[Theorem 2.4]{NW22} proved universality for random integral matrices whose entries are i.i.d. $n^{-1+\epsilon}$-balanced random integers, for a fixed $\epsilon \in (0,1)$. The same authors \cite[Theorems 1.12 and 1.16]{NW25} later established global universality for random symmetric and alternating matrices whose (independent) entries are i.i.d. $\al$-balanced random integers for a fixed constant $\al>0$. It is plausible that our approach could also improve such global universality results, although doing so would likely require corresponding refinements of the global universality arguments themselves.

The paper is organized as follows. In Section \ref{Sec2}, we derive explicit moment formulas for each matrix model (non-symmetric, symmetric and alternating), and then reformulate them in a unified abstract framework that covers all three symmetry types, as well as the random graph model. In Section \ref{Sec3}, we compute the main term of the Hom-moments in each matrix/graph model; in particular, we relate the relevant contributions to the number of maximal isotropic subgroups associated to a symmetric or alternating pairing on a finite abelian group.

Section \ref{Sec4} is the technical heart of the paper, where we prove that the remaining contributions form an error term that tends to $0$ as $n\to\infty$. We obtain this via a single argument in the unified setting rather than by separate case-by-case estimates. In Section \ref{Sec5}, we prove the sharpness of the $\frac{\log n}{n}$ threshold in our main results by showing that universality fails at the critical scale $\frac{\log n}{n}$. Finally, in Section \ref{Sec6}, we outline how to extend our main results to the $\mc{P}$-primary part of the cokernel of a random integral matrix, where $\mc{P}$ is a finite set of primes.

\section{The moment} \label{Sec2}

\subsection{Setting} \label{Sub21}
The following notation and conventions will be used throughout the paper.
\begin{itemize}
\item The matrix space $\mf{M}_n$ denotes either $\M_n$, $\mr{Sym}_n$, or $\mr{Alt}_n$.
\item Let $c>1$ be a real number and set $\al_n:=\frac{c\log n}{n}$. For a positive integer $m$, denote $[m] := \{ 1, 2, \ldots, m \}$.
\item Let $A(n)$ be an $\al_n$-balanced random matrix in $\mf{M}_n(\zp)$. 
\item Let $R := \bZ / p^d \bZ$ for a positive integer $d$ and $R^\times$ be the unit group of $R$. Let $\ze := \exp\left(\frac{2\pi i}{p^d}\right) \in \bC$.
\item Let $G = \bZ/p^{d_1}\bZ \times \cdots \times \bZ/p^{d_r}\bZ$ be a finite abelian $p$-group with $d\ge d_1 \ge d_2 \ge \cdots \ge d_r \ge 1$. Then $G$ is naturally an $R$-module. 
\item Define an $R$-bilinear pairing $\cdot : G \times G \to R$ by
$$
(x_1 + p^{d_1}\bZ, \ldots, x_r+p^{d_r}\bZ) \cdot (y_1 + p^{d_1}\bZ, \ldots, y_r+p^{d_r}\bZ) := \sum_{i=1}^{r} p^{d-d_i}x_iy_i + p^d \bZ.
$$
This pairing is nondegenerate; hence the map $\varphi : G \to \Hom(G,R)$ ($\varphi(g)(h)=g \cdot h$) is an isomorphism.
\item Let $\mr{Ran}(R, \al_n)$ denote the set of all $\al_n$-balanced random elements of $R$.
\item We write $\bP$ for probability and $\bE$ for expected value. We write $f(n)=\Theta(g(n))$ if there exist positive constants $c_1$ and $c_2$ such that $c_1g(n) \le f(n) \le c_2g(n)$ for all sufficiently large $n$.
\end{itemize}
The following constants will be used frequently in Section \ref{Sec4}:
\begin{itemize}
\item Fix constants $c_1, c_2, c_3$ satisfying $1<c_3<c_2<c_1<c$. For example, one may take $c_1=\frac{1+c}{2}$, $c_2=\frac{1+c_1}{2}$ and $c_3=\frac{1+c_2}{2}$.
\item Let $\ga, \ga_2>0$ be sufficiently small and let $\ga_1>0$ be sufficiently large constants. The constant $\ga$ will be specified in the proof of Proposition \ref{prop_case2}, while $\ga_1$ and $\ga_2$ will be specified in the proofs of Propositions \ref{prop_case3a} and \ref{prop_case3c}, respectively.
\end{itemize}

\subsection{Moment formula for each case} \label{Sub22}
Let $A(n)$ be an $\al_n$-balanced random matrix in $\mf{M}_n(\bZ_p)$, and denote by $\ol{A}(n)$ the reduction of $A(n)$ modulo $p^d$. Let $v_1, \ldots, v_n$ be the standard basis of $R^n$ and $G$ be a finite abelian $p$-group such that $p^d G=0$. If we regard $A(n)$ (resp. $\ol{A}(n)$) as a linear map from $\zp^n$ to $\zp^n$ (resp. $R^n$ to $R^n$), then the Hom-moment is given by
\begin{align*}
\bE(\# \Hom(\cok(A(n)), G)) 
&=\sum_{F \in \Hom(\bZ_p^n, G)} \bP(F(A(n))=0)
\\&=\sum_{F\in \Hom(R^n, G)} \bP(F(\ol{A}(n))=0)
\\&=\frac{1}{\lt G\rt^n} \sum_{\substack{F\in\Hom(R^n, G)\\C\in \Hom(\Hom(R^n,G), R)}}\bE\left(\ze^{C(F(\ol{A}(n)))}\right)
\\&=\frac{1}{\lt G\rt^n} \sum_{\substack{F\in\Hom(R^n, G) \\ C_1, \ldots, C_n \in \Hom(G,R)}} \bE\left(\ze^{\sum_{k,l\in[n]} C_l(\ol{A}(n)_{kl}F(v_k))}\right)
\\&=\frac{1}{\lt G\rt^n} \sum_{\substack{g_1,\cdots,g_n\in G\\ h_1,\cdots,h_n\in G}}\bE\left(\ze^{\sum_{k,l\in[n]}\ol{A}(n)_{kl}(g_k \cdot h_l)}\right).
\end{align*}
Here the third equality uses the discrete Fourier transform on the finite abelian group $\Hom(R^n, G)$, and we put $g_k=F(v_k)$ and $C_l(g)=g \cdot h_l$ (equivalently, $\varphi(h_l)=C_l$ for the isomorphism $\varphi : G \to \Hom(G,R)$ defined in Section \ref{Sub21}) in the last equality. 
For each matrix model $\mf{M}_n$, the Hom-moment $\bE(\# \Hom(\cok(A(n)), G))$ is given by
\begin{equation} \label{eq: the moment formula}
\begin{cases}
\displaystyle
\frac{1}{\lt G\rt^n} \sum_{\substack{g_1, \ldots, g_n \in G\\ h_1, \ldots, h_n \in G}} \prod_{k,l \in [n]} \bE\left(\ze^{\ol{A}(n)_{kl}(g_k \cdot h_l)}\right) & \text{if $\mf{M}_n=\M_n$},\\
\displaystyle
\frac{1}{\lt G\rt^n} \sum_{\substack{g_1, \ldots, g_n \in G\\ h_1, \ldots, h_n \in G}} \prod_{k \in [n]}  \bE\left(\ze^{\ol{A}(n)_{kk}(g_k\cdot h_k)}\right)\prod_{1\le k<l \le n} \bE\left(\ze^{\ol{A}(n)_{kl}(g_k \cdot h_l + g_l\cdot h_k)}\right) & \text{if $\mf{M}_n=\mr{Sym}_n$}, \\
\displaystyle
\frac{1}{\lt G\rt^n} \sum_{\substack{g_1, \ldots, g_n \in G\\ h_1, \ldots, h_n \in G}}\prod_{1\le k<l \le n} \bE\left(\ze^{\ol{A}(n)_{kl}(g_k \cdot h_l - g_l\cdot h_k)}\right) & \text{if $\mf{M}_n=\mr{Alt}_n$}.
\end{cases}
\end{equation}

Now we discuss the moment for the sandpile group of an Erdős–Rényi random graph.
Let $\be_n\in[\al_n, 1- \al_n]$, and let $\Gamma \in G(n, \be_n)$ be an Erdős–Rényi random graph on $n$ vertices with independent edge probability $\be_n$. Let $S(n)$ be the sandpile group of $\Gamma \in G(n, \be_n)$. 
Following the proof of \cite[Theorem 6.2]{Woo17}, we have
$$
\bE\left(\# \Hom\left(S(n), G\right)\right) = \lt G\rt^{-1}|R|^{n}\sum_{F \in \Hom(R^n,G)} \bP\left(\tilde{F}X(n) =0\right).
$$
Here $X(n)$ is a random matrix in $\mr{Sym}_n(R)$ such that
$X(n)_{kk}$ is uniform in $R$, 
$$
\bP(X(n)_{kl} = r) = \bP(X(n)_{lk} = r) = \begin{cases}
1-\be_n & \text{if $r=0$} \\
\be_n & \text{if $r=1$}
\end{cases}
$$
for all $k \ne l \in [n]$ and $\tilde{F}:=F\oplus\sigma\in \Hom(R^n, G\oplus R)$ with an augmentation map $\sigma:R^n\to R$ sending $(a_1,\ldots,a_n) \mapsto \sum_{i=1}^{n} a_i$. 
Following the computation of the Hom-moment of $\cok(A(n))$, we have
\begin{align*}
& \bE\left(\# \Hom\left(S(n), G\right)\right) \\
= \, & \frac{|R|^{n}}{\lt G\rt}\cdot\frac{1}{\lt G\oplus R\rt^n}\sum_{\substack{F \in \Hom(R^n,G) \\\tilde{C}\in\Hom(\Hom(R^n,G\oplus R),R)}} \bE\left(\ze^{\tilde{C}(\tilde{F}(X(n)))}\right) \\
= \, & \frac{1}{\lt G\rt^{n+1}}\sum_{\substack{F \in \Hom(R^n,G)\\(C_1,C'_1), \ldots, (C_n,C'_n) \in \Hom(G\oplus R,R)}} \bE\left(\ze^{\sum_{k,l\in[n]}(C_l(X(n)_{kl}F(v_k))+C_l'(X(n)_{kl}))}\right) \\
= \, & \frac{1}{\lt G\rt^{n+1}}\sum_{\substack{g_1,\cdots,g_n\in G\\h_1,\cdots,h_n\in G}}\sum_{a_1,\cdots,a_n\in R}\bE\left(\ze^{\sum_{k,l\in[n]}X(n)_{kl}(g_k\cdot h_l+a_l)}\right) \\
= \, & \frac{1}{\lt G\rt^{n+1}}\sum_{\substack{g_1,\cdots,g_n\in G\\h_1,\cdots,h_n\in G}}\sum_{a_1,\cdots,a_n\in R}\prod_{k\in[n]}\bE\left(\ze^{X(n)_{kk}(g_k\cdot h_k+a_k)}\right)\prod_{1\le k<l\le n}\bE\left(\ze^{X(n)_{kl}(g_k\cdot h_l+g_l\cdot h_k+a_k+a_l)}\right).
\end{align*}
Since $X(n)_{kk}$ follows the uniform measure in $R$, 
$$\bE\left(\ze^{X(n)_{kk}(g_k\cdot h_k + a_k)}\right)
= \begin{cases}
1 & \text{if $a_k = -g_k\cdot h_k $,} \\
0 & \text{if $a_k \neq -g_k\cdot h_k $.}
\end{cases}
$$
Thus the Hom-moment $\bE\left(\# \Hom\left(S(n), G\right)\right)$ can be written as
\begin{equation}
\label{eq: the moment formula for graph}
\begin{split}
& \frac{1}{\lt G\rt^{n+1}} \sum_{\substack{g_1, \ldots, g_n \in G \\ h_1, \ldots, h_n \in G}}
\prod_{1\le k < l \le n}\bE\left(\ze^{X(n)_{kl}(g_k\cdot h_l+g_l\cdot h_k-g_k\cdot h_k-g_l\cdot h_l)}\right) \\
= \, & \frac{1}{\lt G\rt^{n+1}} \sum_{\substack{g_1, \ldots, g_n \in G \\ h_1, \ldots, h_n \in G}}
\prod_{1\le k < l \le n}\bE\left(\ze^{-X(n)_{kl}(g_k-g_l)\cdot (h_k  -h_l)}\right).
\end{split}
\end{equation}

\subsection{A general formula} \label{Sub23}
In this section, we provide a general formula that allows us to treat the Hom-moments appearing in \eqref{eq: the moment formula} and \eqref{eq: the moment formula for graph} simultaneously. First we provide some notations.
\begin{itemize}
    \item Let $\Delta:=\{(k,k):k\in[n]\}$ and let $\Omega$ be either $\Omega_1 := \{ (k,l) \in [n] \times [n] : k \ne l \}$ or $\Omega_2 := \{ (k,l) \in [n] \times [n] : k < l \}$. 
    
    \item $U : G^2 \to R$ is a function and $B:G^2\times G^2\to R$ is a (not necessarily bilinear) pairing that satisfies the following conditions:
    \begin{enumerate}[label = (\roman*)]
    \item The maximal possible cardinality of an isotropic subset of $G^2$ is $\lvert G\rvert$. (We say a subset $W \subseteq G^2$ is \emph{isotropic} if $B(\mb{g},\mb{g}')=0$ for all $\mb{g},\mb{g}'\in W$.) \label{bilinear_cond1}
    \item If an isotropic subset $W \subseteq G^2$ has exactly $\lt G\rt$ elements, then it is a coset $\mb{g}'+H$ for some subgroup $H\le G^2$ and $\mb{g}' \in G^2$. Moreover, for every $\mb{g}\notin W$, either $B(\mb{g}'+*,\mb{g})$ or  $B(\mb{g},\mb{g}'+*)$ is a non-constant affine map on $H$.\label{bilinear_cond2}
    \item If $\Omega=\Omega_2$, then $B$ is either symmetric or alternating. \label{bilinear_cond3}
    \end{enumerate}

    \item The \emph{trivial part} of $G^{2n}$ (with respect to given $U$ and $B$) is the set 
    $$
    G_{tr}:=\{(\mb{g}_1,\ldots,\mb{g}_n)\in G^{2n}:U(\mb{g}_k)=B(\mb{g}_k,\mb{g}_l)=0\text{ for all } k,l\in[n]\}.
    $$
\end{itemize}

Now for each $n \ge 1$, consider the function 
$$
f:\mr{Ran}(R,\al_n)^{\Omega\cup\Delta}\to\bC
$$ 
defined by 
\begin{align*}
f((x_{k,l,n}))&:=\frac{1}{\lt G\rt^n} \sum_{\mb{g}_1,\cdots,\mb{g}_n\in G^2}\prod_{k\in[n]}\bE\left(\ze^{U(\mb{g}_k)x_{k,k,n}}\right)\cdot\prod_{(k,l)\in \Omega}\bE\left(\ze^{B(\mb{g}_k,\mb{g}_l)x_{k,l,n}}\right)
\\&=\frac{\lt G_{tr}\rt}{\lt G\rt^n}+\frac{1}{\lt G\rt^n}\sum_{(\mb{g}_1,\cdots,\mb{g}_n)\notin G_{tr}}\prod_{k\in[n]}\bE\left(\ze^{U(\mb{g}_k)x_{k,k,n}}\right)\cdot\prod_{(k,l)\in \Omega}\bE\left(\ze^{B(\mb{g}_k,\mb{g}_l)x_{k,l,n}}\right).
\end{align*}
Write $\mb{g}_k=(g_k,h_k)$ ($g_k, h_k \in G$). 
For each random matrix model, the Hom-moment can be written in the form 
$f((\ol{A}(n)_{k,l}))$ for the following choices of $\Omega$, $U$, and $B$ in cases (1)--(3);  for the random graph model, the Hom-moment is equal to $\frac{f((X(n)_{k,l}))}{\lt G \rt}$ for $\Omega$, $U$, and $B$ as in case (4). 
\begin{enumerate}
    \item Non-symmetric: $\Omega=\Omega_1$, $U(\mb{g}_k) = U_1(\mb{g}_k) := g_k \cdot h_k$ and $B(\mb{g}_k, \mb{g}_l) = B_1(\mb{g}_k, \mb{g}_l) := g_k \cdot h_l$.
    \item Symmetric: $\Omega=\Omega_2$, $U(\mb{g}_k) = U_2(\mb{g}_k) := g_k \cdot h_k$ and $B(\mb{g}_k, \mb{g}_l) = B_2(\mb{g}_k, \mb{g}_l) := g_k \cdot h_l + g_l \cdot h_k$.
    \item Alternating: $\Omega=\Omega_2$, $U(\mb{g}_k) = U_3(\mb{g}_k) := 0$ and $B(\mb{g}_k, \mb{g}_l) = B_3(\mb{g}_k, \mb{g}_l) := g_k \cdot h_l - g_l \cdot h_k$.
    \item Graph: $\Omega=\Omega_2$, $U(\mb{g}_k) = U_4(\mb{g}_k) := 0$ and $B(\mb{g}_k, \mb{g}_l) = B_4(\mb{g}_k, \mb{g}_l) := -(g_k-g_l) \cdot (h_k-h_l)$.
\end{enumerate}
Define
$$
\mr{Iso}^{U,B}_G := \left\{W \le G^2 : U(\mb{g}) = 0 \text{ and } B(\mb{g}, \mb{g}') = 0 \text{ for all $\mb{g},\mb{g}' \in W$}\right\},
$$
the set of isotropic subgroups of \emph{$U$-vanishing type}. 
\begin{lem}
In each of the cases (1)--(4), the pairing $B$ satisfies conditions \ref{bilinear_cond1}, \ref{bilinear_cond2} and \ref{bilinear_cond3}. (The condition \ref{bilinear_cond3} is vacuous in case (1).)
\end{lem}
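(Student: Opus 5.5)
We verify the three conditions case by case. The one structural tool is the nondegeneracy of the pairing $\cdot : G\times G\to R$, i.e.\ that $\varphi$ is an isomorphism. It implies that for every subgroup $X\le G$ the annihilator $X^{\perp}:=\{y\in G: x\cdot y=0 \ \forall x\in X\}$ satisfies $|X||X^\perp|=|G|$ and $(X^\perp)^\perp=X$. The same statement holds for any nondegenerate $R$-bilinear form on $G^2$. Condition \ref{bilinear_cond3} is immediate: $B_2$ and $B_4$ are visibly symmetric, and $B_3$ is alternating since $B_3(\mb{g},\mb{g})=g\cdot h-g\cdot h=0$.

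\emph{Case (1).} Let $W\subseteq G^2$ be isotropic for $B_1$. Let $X\le G$ be the subgroup generated by the first coordinates of elements of $W$, and $Y$ the subgroup generated by the second coordinates. Isotropy says $g\cdot h'=0$ for all $(g,h),(g',h')\in W$, so by bilinearity $Y\subseteq X^\perp$. Hence
$$|W|\le |X||Y|\le |X||X^\perp|=|G|.$$
If equality holds, then $W=X\times X^\perp=:H$, which is a subgroup (take $\mb{g}'=0$). Now let $\mb{g}=(a,b)\notin W$. If $a\notin X=(X^\perp)^\perp$, then $(x,y)\mapsto B_1(\mb{g},(x,y))=a\cdot y$ is a nonzero linear map on $H$. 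Otherwise $b\notin X^\perp$, and $(x,y)\mapsto B_1((x,y),\mb{g})=x\cdot b$ is nonzero on $H$. This gives \ref{bilinear_cond1} and \ref{bilinear_cond2}.

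\emph{Cases (2), (3).} Here $B=B_2,B_3$ is $R$-bilinear on $G^2$. It is nondegenerate, because the map $(g,h)\mapsto\big((x,y)\mapsto x\cdot h\pm g\cdot y\big)$ from $G^2$ to $\Hom(G^2,R)$ is $\varphi$ applied coordinatewise, up to swapping coordinates and a sign, hence bijective. By bilinearity, an isotropic subset $W$ generates an isotropic subgroup $H\subseteq H^\perp$. Then
$$|W|\le|H|\le |G^2|^{1/2}=|G|,$$
using $|H||H^\perp|=|G|^2$. If $|W|=|G|$, then $W=H=H^\perp$, a subgroup. For $\mb{g}\notin H=H^\perp$, the map $B(\cdot,\mb{g})$ is a nonzero linear map on $H$.

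\emph{Case (4).} This is the only non-bilinear case and the main point to check. The key observation is translation invariance: $B_4(\mb{u}+\mb{t},\mb{v}+\mb{t})=B_4(\mb{u},\mb{v})$. Given an isotropic $W$, fix $\mb{w}_0\in W$ and put $W'=W-\mb{w}_0\ni 0$. For $\mb{u}=(g,h)\in W'$, isotropy against $0$ gives $g\cdot h=0$. Expanding,
$$B_4(\mb{u},\mb{v})=-g\cdot h-g'\cdot h'+g\cdot h'+g'\cdot h=B_2(\mb{u},\mb{v}) \quad\text{on } W'.$$
So $W'$ is $B_2$-isotropic, and Case (2) gives $|W|=|W'|\le|G|$. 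In case of equality, $W'=H$ with $H=H^{\perp_{B_2}}$, and $W=\mb{w}_0+H$. For $\mb{g}\notin W$, set $\mb{u}=\mb{g}-\mb{w}_0\notin H$. For $\mb{x}\in H$, translation invariance and the expansion above give
$$B_4(\mb{w}_0+\mb{x},\mb{g})=B_4(\mb{x},\mb{u})=B_2(\mb{x},\mb{u})-u_1\cdot u_2,$$
using $x_1\cdot x_2=0$ on $H$. This is an affine map of $\mb{x}$ whose linear part $B_2(\cdot,\mb{u})$ is nonzero on $H$, since $\mb{u}\notin H^{\perp}$. Hence the map is non-constant, completing \ref{bilinear_cond2}.
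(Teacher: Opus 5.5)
Your proof is correct and follows the paper's route. In case (4) the paper likewise translates $W$ by a fixed element (via Lemma \ref{lem: graph lemma coset}) into a $P$-vanishing $B_2$-isotropic subgroup $H=H^{\perp_{B_2}}$, and then observes that $B_4(\mb{g},\mb{g}'+*)$ is affine with nonzero linear part coming from $B_2$; your annihilator-counting arguments for cases (1)--(3), which the paper leaves as straightforward, are also correct, and the only point left implicit is that the bound $|G|$ in \ref{bilinear_cond1} is attained, which is immediate since $G\times\{0\}$ is isotropic in all four cases.
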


\begin{proof}
It is straightforward to verify that the lemma holds in cases (1)--(3); therefore, we provide a proof only for case (4).
Let $W$ be an isotropic subset of $G^2$ of maximal possible size. By Lemma \ref{lem: graph lemma coset}, we have $W =\mb{g}'+H$ for some $\mb{g}' \in G^2$ and $H \in \mr{Iso}^{U_2,B_2}_G$ with $|H|=|G|$. This proves \ref{bilinear_cond1} for case (4). For \ref{bilinear_cond2}, let $\mb{g}\notin W$ and write $\mb{g}-\mb{g}'= (a_1,b_1) \notin H$. Then for every $(a,b) \in H$, $a \cdot b = 0$ so 
$$
B_4(\mb{g}, \mb{g}'+(a,b)) = B_4(\mb{g}-\mb{g}', (a,b)) = -a_1\cdot b_1 + B_2((a_1,b_1), (a,b)).
$$
Since $B_2$ is bilinear, $B_4(\mb{g}, \mb{g}'+*)$ (and similarly $B_4(\mb{g}'+*, \mb{g})$) is an affine map on $H$. Furthermore, $B_4(\mb{g}, \mb{g}'+*)$ (and similarly $B_4(\mb{g}'+*, \mb{g})$) is a non-constant map since $(a_1, b_1) \notin H= H^{\perp_{B_2}}$. Finally, $B_4$ is symmetric so it satisfies \ref{bilinear_cond3}.
\end{proof}

The following theorem (proved in Section \ref{Sec4}) shows that the Hom-moments are asymptotically determined by the trivial part. Combined with Proposition \ref{prop: main terms for random matrix} (resp. Proposition \ref{prop: main terms for graph}), along with the relation between Hom-moments and Sur-moments, this yields Theorem \ref{main thm_moment} (resp. Theorem \ref{main thm_moment graph}).

\begin{thm}\label{thm: general}
For any $(x_{k,l,n}) \in \mr{Ran}(R,\al_n)^{\Omega\cup\Delta}$, the value
$$
\lt f((x_{k,l,n}))-\frac{\lt G_{tr}\rt}{\lt G\rt^n}\rt=\frac{1}{\lt G\rt^n}\lt\sum_{(\mb{g}_1,\cdots,\mb{g}_n)\notin G_{tr}}\prod_{k\in[n]}\bE\left(\ze^{U(\mb{g}_k)x_{k,k,n}}\right)\cdot\prod_{(k,l)\in \Omega} \bE\left(\ze^{B(\mb{g}_k,\mb{g}_l)x_{k,l,n}}\right)\rt
$$
converges to 0 as $n\to\infty$. 
\end{thm}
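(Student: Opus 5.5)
We bound the sum term by term in absolute value, using the standard estimate for an $\al_n$-balanced random element $x$ of $R$: if $a \in R$ is nonzero then $|\bE(\ze^{ax})| \le \exp(-c' \al_n)$ for some constant $c'>0$ depending only on $p$. Indeed, writing $a = p^j u$ with $u$ a unit and $j<d$, the reduction of $x$ modulo $p$ is $\al_n$-balanced, and conditioning on $x$ modulo $p^{d-j}$ shows that at least an $\al_n$-fraction of mass is moved away from the dominant value. For each tuple $\mb{g}=(\mb{g}_1,\ldots,\mb{g}_n)$ we therefore obtain
$$
\prod_{k}\lt\bE\left(\ze^{U(\mb{g}_k)x_{k,k,n}}\right)\rt\prod_{(k,l)\in\Omega}\lt\bE\left(\ze^{B(\mb{g}_k,\mb{g}_l)x_{k,l,n}}\right)\rt \le \exp\bigl(-c'\al_n N(\mb{g})\bigr),
$$
where $N(\mb{g})$ is the number of pairs $(k,l)\in\Omega$ with $B(\mb{g}_k,\mb{g}_l)\neq 0$; the diagonal factors are simply bounded by $1$. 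By condition \ref{bilinear_cond3}, in the $\Omega_2$ case $B(\mb{g}_k,\mb{g}_l)\ne 0$ iff $B(\mb{g}_l,\mb{g}_k)\ne0$, so $N$ counts unordered non-isotropic pairs. The whole problem is then combinatorial: we must show that $|G|^{-n}\sum_{\mb{g}\notin G_{tr}} \exp(-c'\al_n N(\mb{g}))\to 0$. This crude absolute bound loses too much for tuples near $G_{tr}$, so there we keep the exact Fourier structure.

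We split tuples according to how concentrated they are on a large isotropic set. Let $W$ be an isotropic subset of maximal size with $|W|=|G|$, and let $m$ be the number of indices $k$ with $\mb{g}_k\notin W$, where $W$ is chosen to minimise $m$. \textbf{Case 1} ($m\ge \ga n$): no isotropic set captures most coordinates. By condition \ref{bilinear_cond1}, any set of $|G|+1$ distinct values contains a non-isotropic pair, and a Turán-type counting gives $N(\mb{g})\ge \delta n^2$. The contribution is then at most $|G|^{n}\exp(-c'\delta c n\log n)\cdot |G|^{-n}\cdot |G^2|^n\to0$. \textbf{Case 2} ($1\le m<\ga n$): each outlier $\mb{g}_k\notin W$ interacts with the $\ge n-m$ coordinates lying in $W=\mb{g}'+H$. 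By condition \ref{bilinear_cond2}, $B(\mb{g}'+*,\mb{g}_k)$ or $B(\mb{g}_k,\mb{g}'+*)$ is a non-constant affine map on $H$, so it vanishes on at most a $1/p$ fraction of $H$. If the in-$W$ coordinates are roughly equidistributed over $W$, each outlier then produces $\gtrsim (1-1/p)n$ nonzero pairs, giving a factor $n^{-c(1-1/p)}$ per outlier. Choices with $m$ outliers number about $\binom{n}{m}|G|^{2m}$, and the total weight of the in-$W$ part is normalised by $|G|^{-n}$ against $|W|^{n-m}$.

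The main obstacle is that the naive per-outlier factor $n^{-c(1-1/p)}$ times $n$ choices is not $o(1)$ when $c$ is close to $1$. So we must further split according to the distribution of the $W$-coordinates. If the $W$-coordinates concentrate on the zero set $Z$ of the affine map, then the tuple lies near a smaller configuration and this is penalised by a factor $(|Z|/|W|)^{n}$ in the count. Otherwise the coordinates are spread and we get essentially all $n$ pairs nonzero, hence a factor $n^{-c_1}$ with $c_1>1$ per outlier, which beats the $n$ choices of position. I would first try a further threshold: if the in-$W$ coordinates do not hit some coset $\mb{g}'+H'$ of an index-$p$ subgroup $H'\le H$ with at least $(1-\ga_2)n$ of them, then each outlier sees at least $\ga_2 n(1-1/p)$ non-isotropic partners... but that bound is too weak. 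The better approach is to sum exactly over the in-$W$ coordinates for fixed outliers. This sum factorizes: for fixed outliers it becomes $\prod_{j}\bigl(|W|^{-1}\sum_{w\in W}\prod_{k\text{ outlier}}|\bE(\ze^{B(w,\mb{g}_k)x})|\bigr)$. For a single outlier, each factor is at most $1-(1-1/p)(1-e^{-c'\al_n})$, and raising it to the power $n$ gives $\exp(-(1-1/p)c'c\log n)$. The constant $c'$ must be sharpened: the correct statement is that $\bE$ of $|\cdot|$ averaged over a nonconstant affine map is at most $1-\al_n$ in the appropriate sense, which yields $n^{-c}$. Organising the constants $c_1,c_2,c_3$ and the cases with $\ga,\ga_1,\ga_2$ would follow this pattern, with the remaining case $m=0$ but $\mb{g}\notin G_{tr}$ handled by the $U$-terms and the coset shift $\mb{g}'\ne0$ similarly.
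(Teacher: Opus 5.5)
Your plan for tuples near a maximal isotropic coset $W=\mb{g}'+H$ eventually lands on the paper's mechanism for case $\mc{C}^W_{3,c}$: fix the outlier positions $X$, let the remaining coordinates run freely over $W$, and use that each outlier pairs with $W$ through a nonconstant affine map. However, the two estimates that make this work are asserted rather than proved, so Case 2 does not close.

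First, the constant. Your bound $\lt\bE(\ze^{ax})\rt\le e^{-c'\al_n}$ gives only $n^{-c'c(1-1/p)}$ per outlier, and the sharpening of $c'$ that you defer is the whole difficulty when $c$ is close to $1$. You need $\lt\bE(\ze^{rx})\rt\le 1-a(r)$ with $a\ge 0$ whose average over the image of a nonconstant affine map is at least $c_1\frac{\log n}{n}$ with $c_1>1$. The paper gets this from three ingredients: the reduction to two-point laws on $\{0,t\}$, $t\in R^\times$ (Lemma \ref{lem: extreme point}); the pointwise bound $\lt 1-\al_n+\al_n\ze^r\rt\le 1-\frac{2c_1\log n}{n}\sin^2\frac{\pi r}{p^d}$ (Lemma \ref{lem_ine1}); and the exact identity that $\sin^2$ of a nonconstant affine map averages to $\frac12$ (Lemma \ref{lem: sin sum}).

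Second, you only treat a single outlier. With several outliers, the factor attached to an in-$W$ coordinate is the $W$-average of a product over all outliers, not a product of averages. The paper linearizes via $e^{-u}\le 1-u+u^2/2$ to get $1-(1-O(\ga_2))\sum_{x\in X}a_x$, and this is legitimate only when $\lt X\rt\le\ga_2\frac{n}{\log n}$. Beyond that range the paper needs different arguments:
\begin{itemize}
\item in $\mc{C}^W_{3,a}$, some outlier value occurs more than $\ga_1\frac{n}{\log n}$ times, which gives $\gtrsim n^2/\log n$ nonzero pairs;
\item in $\mc{C}^W_{3,b}$, a factor $K^{-n}$ with $K>1$ beats the $\lt G\rt^n e^{o(n)}$ count of tuples whose outlier values each occur at most $\ga_1\frac{n}{\log n}$ times.
\end{itemize}
Both arguments use that every element of $W$ occurs at least $\ga n$ times. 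Your choice of $W$ (minimizing $m$) does not guarantee this, and your Case 2 ($1\le m<\ga n$) has no mechanism for $m\gg n/\log n$.

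Case 1 also has a gap. Condition \ref{bilinear_cond1} plus a Tur\'an-type count only shows that if $N(\mb{g})=o(n^2)$, then almost all coordinates lie in \emph{some} isotropic set of size at most $\lt G\rt$. Conditions \ref{bilinear_cond1}--\ref{bilinear_cond3} do not imply that this set sits inside an isotropic set of size exactly $\lt G\rt$, and you do not prove such an extension property for the concrete pairings either. Consider tuples spread over an isotropic $W'$ with $\lt W'\rt<\lt G\rt$ that is contained in no maximal one. They fall into your Case 1, yet can have $N(\mb{g})=O(n)$, or even $N(\mb{g})=0$ with only some $U(\mb{g}_k)\ne 0$, so the claimed $\exp(-\Theta(n\log n))$ bound is unavailable. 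The paper sidesteps this by taking $W$ to be the set of values occurring at least $\ga n$ times. When that set is isotropic with $\lt W\rt<\lt G\rt$ (case $\mc{C}_2$), no Fourier decay is used at all: such tuples number at most $(\lt G\rt-1)^n e^{\epsilon(\ga)n}$ with $\epsilon(\ga)\to 0$, which is $o(\lt G\rt^n)$.
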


\section{Main term of the moments} \label{Sec3}

In this section, we compute the main term $\frac{\lt G_{tr}\rt}{\lt G\rt^n}$ ($\frac{\lt G_{tr}\rt}{\lt G\rt^{n+1}}$ for random graphs) of each random model.

\begin{prop}
\label{prop: main terms for random matrix}
Let $U$, $B$ and $G_{tr}$ be as before. Then
$$
\lim_{n\to \infty}\frac{\lt G_{tr}\rt}{\lt G\rt^n} = \begin{cases}
\displaystyle
\sum_{H \le G} 1    & \text{if $\mf{M}_n=\M_n$}, \\
\displaystyle
\sum_{H \le G} |\wedge^2 H|    & \text{if $\mf{M}_n=\mr{Sym}_n$}, \\
\displaystyle
\sum_{H \le G} |\mr{Sym}^2 H|    & \text{if $\mf{M}_n=\mr{Alt}_n$}.
\end{cases}
$$
\end{prop}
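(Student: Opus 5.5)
The plan is to count $G_{tr}$ by grouping $n$-tuples according to the subgroup they generate. An element $(\mb{g}_1,\ldots,\mb{g}_n)\in G^{2n}$ lies in $G_{tr}$ exactly when the set $\{\mb{g}_1,\ldots,\mb{g}_n\}$ is $U$-vanishing and pairwise $B$-isotropic. In cases (1)--(3) the pairing $B$ is bilinear. In cases (2) and (3), $U(\mb{g})=g\cdot h$ (resp. $0$) is the quadratic form attached to $B$, meaning $B(\mb{g},\mb{g})=2U(\mb{g})$ (resp. $0$). Using these facts I will show that $U$ vanishes on the whole subgroup $W=\langle \mb{g}_1,\ldots,\mb{g}_n\rangle$: expand $U(\sum a_k\mb{g}_k)=\sum a_k^2U(\mb{g}_k)+\sum_{k<l}a_ka_lB(\mb{g}_k,\mb{g}_l)$, which is valid for $U_1,U_2$ with $B_2$, and trivially for $U_3=0$. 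In case (1), $U_1(\mb{g})=g\cdot h$ and $B_1$ is not symmetric, but on the relevant index set $\Omega_1$ both $B_1(\mb{g}_k,\mb{g}_l)$ and $B_1(\mb{g}_l,\mb{g}_k)$ vanish, so the same expansion works. Hence $G_{tr}$ is exactly the set of $n$-tuples whose span $W$ lies in $\mr{Iso}^{U,B}_G$. I will then write
$$
|G_{tr}|=\sum_{W\in \mr{Iso}^{U,B}_G}\#\{(\mb{g}_k)\in W^n:\langle \mb{g}_k\rangle=W\}.
$$
The inner count equals $|W|^n\,(1-o(1))$ as $n\to\infty$, since the proportion of $n$-tuples in a fixed finite group failing to generate it decays exponentially. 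Condition \ref{bilinear_cond1} gives $|W|\le |G|$ for all such $W$. So, dividing by $|G|^n$, only the $W$ with $|W|=|G|$ survive in the limit, and
$$
\lim_{n\to\infty}\frac{|G_{tr}|}{|G|^n}=\#\{W\in \mr{Iso}^{U,B}_G:|W|=|G|\}.
$$

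It remains to count maximal $U$-vanishing isotropic subgroups of $G^2=G\oplus G$ in each case. The pairing $(g,h),(g',h')\mapsto g\cdot h'$ identifies the second factor $G$ with the dual $\Hom(G,R)$. The plan is to parametrize $W$ of order $|G|$ by its projection $H:=\pi_1(W)\le G$ together with the fibre $W\cap(0\oplus G)=K$. Isotropy forces $K\subseteq H^{\perp}$, and $|W|=|H||K|=|G|=|H||H^\perp|$ forces $K=H^\perp$. So $W$ is the graph of a homomorphism $\psi:H\to G/H^\perp\cong \Hom(H,R)$ extended by $H^\perp$, which amounts to a bilinear form $\beta_\psi(x,y)=x\cdot\psi(y)$ on $H$. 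In case (1), the condition $B_1=0$ on $W\times W$, applied to $(x,\psi x)$ and $(y,\psi y)$, reads $x\cdot\psi(y)=0$, so $\psi=0$. Thus for each $H$ there is exactly one $W$, namely $H\oplus H^\perp$ (which is indeed $U_1$-vanishing), giving $\sum_{H\le G}1$. In case (2), $B_2=0$ means $\beta_\psi$ is alternating in the sense $\beta(x,y)+\beta(y,x)=0$, and $U_2=0$ means $\beta(x,x)=0$. So $\psi$ ranges over the alternating forms on $H$, i.e.\ $\Hom(\wedge^2H,R)$, which has cardinality $|\wedge^2H|$. In case (3), $B_3=0$ means $\beta$ is symmetric and $U_3$ imposes nothing, so $\psi$ ranges over $\Hom(\mr{Sym}^2H,R)$, of cardinality $|\mr{Sym}^2H|$. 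Both counts use that $\psi$ is well defined modulo $H^\perp$ and that $\mathrm{Hom}(H,R)\cong \Hom(G,R)/\ldots$ identifies $G/H^\perp$ with $\Hom(H,R)$, which follows from nondegeneracy of $\cdot$ and the fact that $R$ is injective over itself.

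I expect the main obstacle to be the bookkeeping in this last parametrization. Three points need checking: that every $W$ of order $|G|$ really is such a graph, that distinct $(H,\psi)$ give distinct $W$, and that forms on $H$ valued in $R$ correspond exactly to $\Hom(\wedge^2H,R)$ (resp. $\Hom(\mr{Sym}^2H,R)$) with the stated cardinalities. The last point is Pontryagin duality, since $p^dH=0$. The $p=2$ subtlety between "alternating" and "skew-symmetric" is handled precisely because $U_2$ imposes $\beta(x,x)=0$.
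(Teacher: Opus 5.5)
Your proposal is correct and follows essentially the paper's route. You reduce $\lim |G_{tr}|/|G|^n$ to the number of maximal $U$-vanishing isotropic subgroups, using the generation count and condition \ref{bilinear_cond1}. You then parametrize each such $W$ by a projection $H$ with fibre $H^\perp$, together with a bilinear form on $H$, as in Proposition \ref{prop3d}. The paper differs only in minor ways: it projects onto the second factor rather than the first, and it handles the non-symmetric case by summing directly over $H=\langle g_1,\ldots,g_n\rangle$ and $h_l\in H^\perp$, which your uniform treatment recovers as $W=H\oplus H^\perp$. Your polarization argument showing that $G_{tr}$ is exactly the set of tuples whose span lies in $\mr{Iso}^{U,B}_G$ also makes explicit a step the paper leaves implicit.
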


\begin{proof}
Let $r(H) = \dim_{\fp} H/pH$ and write $\mb{g}_k=(g_k,h_k)$. Then
\begin{align} \label{number_gen}
\#\{(g_1,\cdots,g_n)\in H^n:\langle g_1,\cdots,g_n\rangle=H\}=\lt H\rt^n\cdot\prod_{i=1}^{r(H)}(1-p^{i-1-n})
\end{align}
whenever $n\ge r(H)$. 
If $\mf{M}_n=\M_n$, then
$$
(\mb{g}_1, \ldots, \mb{g}_n) \in G_{tr} \, \Longleftrightarrow \, g_k \cdot h_l = 0 \text{ for all $k,l \in [n]$}.
$$
For $H \le G$, let $H^\perp := \{g \in G : g\cdot h =0 \text{ for all $h \in H$}\}.$
By \eqref{number_gen}, we have
\begin{align*}
\lim_{n \to \infty} \frac{\lt G_{tr}\rt}{\lt G\rt^n}
& = \lim_{n \to \infty} \frac{1}{\lt G\rt^n}\sum_{H \le G}\sum_{\substack{g_1,\ldots,g_n \in H \\ \langle g_1,\ldots, g_n\rangle = H} } \sum_{h_1, \ldots, h_n \in H^\perp} 1\\
& = \lim_{n \to \infty} \frac{1}{\lt G\rt^n}\sum_{H \le G}\sum_{\substack{g_1,\ldots,g_n \in H \\ \langle g_1,\ldots, g_n\rangle = H} } \lt H^\perp\rt^n\\
& = \lim_{n \to \infty} \sum_{H\le G}\prod_{i=1}^{r(H)}(1-p^{i-1-n}) \\
& = \sum_{H\le G} 1.
\end{align*}
Now suppose that $\mf{M}_n = \mr{Sym}_n$ or $\mr{Alt}_n$. 
Again by \eqref{number_gen}, we have
\begin{align*}
\lim_{n \to \infty} \frac{\lt G_{tr}\rt}{\lt G\rt^n} 
&=\lim_{n \to \infty} \frac{1}{\lt G\rt^n}\sum_{W \in \mr{Iso}^{U,B}_G}\sum_{\substack{\mb{g}_1, \ldots, \mb{g}_n \in W \\\langle \mb{g}_1, \ldots, \mb{g}_n\rangle = W}} 1\\
&=\lim_{n \to \infty} \sum_{W \in \mr{Iso}^{U,B}_G}\left(\frac{\lt W\rt}{\lt G\rt}\right)^n\cdot\prod_{i=1}^{r(W)}(1-p^{i-1-n}) \\
& =\#\{W\in\mr{Iso}^{U,B}_G:\lt W\rt=\lt G\rt\},
\end{align*}
where last term is the number of maximal isotropic subgroups of $U$-vanishing type by assumption \ref{bilinear_cond1} in Section \ref{Sub23}. The proposition follows from the correspondences \eqref{eq: first bijection} and \eqref{eq: second bijection}. We remark that $U_2((g,h)) = P(g,h)$ where $P$ is a pairing defined in Section \ref{Sub31} and $U_3=0$.
\end{proof}

To compute the main term in the random graph case, we first describe a criterion for $(\mb{g}_1, \ldots, \mb{g}_n) \in G_{tr}$.

\begin{lem}
\label{lem: graph lemma coset}
Suppose that $(U,B)$ is given as in the random graph model. Then $(\mb{g}_1, \ldots, \mb{g}_n) \in G_{tr}$ if and only if there exists $W \in \mr{Iso}^{U_2,B_2}_G$ such that $\langle \mb{g}_1-\mb{g}_n, \ldots, \mb{g}_{n-1}-\mb{g}_n \rangle = W$.
\end{lem}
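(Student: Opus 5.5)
The plan is to reduce membership in $G_{tr}$ to conditions on the differences $\mb{d}_k := \mb{g}_k - \mb{g}_n$ ($k \in [n-1]$), and then use that $U_2$ is a quadratic form whose polarization is $B_2$. Throughout, write $\mb{d}_k = (a_k, b_k)$ and set $\mb{d}_n := 0$. Recall that in the graph model $U = U_4 = 0$, so the only conditions defining $G_{tr}$ are $B_4(\mb{g}_k,\mb{g}_l) = 0$ for all $k,l \in [n]$.

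\textbf{Step 1: translation invariance.} Since $B_4(\mb{g}_k,\mb{g}_l) = -(g_k-g_l)\cdot(h_k-h_l)$ depends only on $\mb{g}_k - \mb{g}_l = \mb{d}_k - \mb{d}_l$, we have $B_4(\mb{g}_k,\mb{g}_l) = -(a_k - a_l)\cdot(b_k - b_l)$.

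\textbf{Step 2: expand using bilinearity.} The pairing $\cdot$ on $G$ is symmetric and $R$-bilinear. Expanding gives
$$
(a_k - a_l)\cdot(b_k - b_l) = U_2(\mb{d}_k) + U_2(\mb{d}_l) - B_2(\mb{d}_k,\mb{d}_l),
$$
where $U_2((a,b)) = a\cdot b$ and $B_2((a,b),(a',b')) = a\cdot b' + a'\cdot b$.
\begin{itemize}
\item Taking $l = n$ (so $\mb{d}_n = 0$), the conditions $B_4(\mb{g}_k,\mb{g}_n)=0$ are exactly $U_2(\mb{d}_k) = 0$ for all $k$.
\item Given these, the remaining conditions $B_4(\mb{g}_k,\mb{g}_l)=0$ are exactly $B_2(\mb{d}_k,\mb{d}_l) = 0$ for all $k,l$.
\item For $k = l$ the expression is $2U_2(\mb{d}_k) - B_2(\mb{d}_k,\mb{d}_k) = 0$ automatically, so these cases impose nothing.
\end{itemize}
Hence $(\mb{g}_1,\ldots,\mb{g}_n) \in G_{tr}$ if and only if $U_2$ vanishes on every $\mb{d}_k$ and $B_2$ vanishes on every pair $(\mb{d}_k,\mb{d}_l)$.

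\textbf{Step 3: pass from generators to the subgroup.} Let $W := \langle \mb{d}_1,\ldots,\mb{d}_{n-1}\rangle$.
\begin{itemize}
\item $B_2$ is bilinear, so vanishing on all pairs of generators implies $B_2 \equiv 0$ on $W \times W$.
\item For $U_2$, use the polarization identity $U_2(\mb{x}+\mb{y}) = U_2(\mb{x}) + U_2(\mb{y}) + B_2(\mb{x},\mb{y})$, together with $U_2(-\mb{x}) = U_2(\mb{x})$ and $U_2(0) = 0$. Every element of $W$ is a finite sum of $\pm\mb{d}_k$'s, and $B_2$ vanishes on $W$, so induction on the number of summands gives $U_2 \equiv 0$ on $W$.
\end{itemize}
Therefore $W \in \mr{Iso}^{U_2,B_2}_G$. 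The converse is immediate: if $W \in \mr{Iso}^{U_2,B_2}_G$, then in particular the generators $\mb{d}_k$ satisfy the conditions of Step 2, so the tuple lies in $G_{tr}$.

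\textbf{Main obstacle.} There is no serious obstacle. The only delicate point is the $U_2$-vanishing on all of $W$, since $U_2$ is not additive and vanishing on generators alone is not enough; the polarization identity, combined with the $B_2$-isotropy already established, handles this.
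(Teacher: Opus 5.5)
Your proof is correct and follows the paper's argument: the paper also writes $\mb{g}_k-\mb{g}_n=(a_k,b_k)$ and uses the identity $-(a_k-a_l)\cdot(b_k-b_l)=-U_2(\mb{h}_k)-U_2(\mb{h}_l)+B_2(\mb{h}_k,\mb{h}_l)$ in both directions, with $l=n$ giving $U_2$-vanishing on the generators. Your Step 3 uses the polarization identity to show $U_2$ vanishes on all of $W$, not only on the generators; the paper passes over this silently, and it is genuinely needed when $p=2$, where $B_2$-isotropy alone gives only $2U_2=0$.
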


\begin{proof}
Write $\mb{g}_k-\mb{g}_n = \mb{h}_k = (a_k,b_k)$ for $a_k,b_k\in G$. If $\langle \mb{h}_1, \ldots, \mb{h}_{n-1} \rangle \in \mr{Iso}^{U_2,B_2}_G$, then
$$
B(\mb{g}_k, \mb{g}_l) 
= -(g_k-g_l)\cdot(h_k-h_l) 
= -(a_k-a_l)\cdot(b_k-b_l) 
= -U_2(\mb{h}_k) - U_2(\mb{h}_l) + B_2(\mb{h}_k, \mb{h}_l) = 0
$$
for all $k,l \in [n]$ so $(\mb{g}_1, \ldots, \mb{g}_n) \in G_{tr}$. 
Conversely, if $B(\mb{g}_k,\mb{g}_l)=0$ for all $k, l\in[n]$, then
$$
U_2(\mb{h}_k)=a_k \cdot b_k=(g_k-g_n)\cdot(h_k-h_n)=-B(\mb{g}_k,\mb{g}_n)=0
$$
and
$$
B_2(\mb{h}_k, \mb{h}_l)=a_k\cdot b_k+a_l\cdot b_l+B(\mb{g}_k,\mb{g}_l)=0
$$
so $W = \langle \mb{h}_1, \ldots, \mb{h}_{n-1} \rangle$ is an element of $\mr{Iso}^{U_2,B_2}_G$.
\end{proof}

\begin{prop}
\label{prop: main terms for graph}
Suppose that $(U,B)$ is given as in the random graph model. Then
$$
\lim_{n\to \infty}\frac{\lt G_{tr}\rt}{\lt G\rt^{n+1}} = \sum_{H \le G}|\wedge^2 H|.
$$
\end{prop}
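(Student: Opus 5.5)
We use Lemma \ref{lem: graph lemma coset} to parametrize $G_{tr}$ by the last vector $\mb{g}_n$ and the differences $\mb{h}_k := \mb{g}_k - \mb{g}_n$ ($k \in [n-1]$). The map
$$
(\mb{g}_1, \ldots, \mb{g}_n) \mapsto (\mb{g}_n, \mb{h}_1, \ldots, \mb{h}_{n-1})
$$
is a bijection $G^{2n} \to G^2 \times (G^2)^{n-1}$. By the lemma, the tuple lies in $G_{tr}$ if and only if $\langle \mb{h}_1, \ldots, \mb{h}_{n-1} \rangle = W$ for some $W \in \mr{Iso}^{U_2,B_2}_G$, with $\mb{g}_n \in G^2$ arbitrary. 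Grouping the tuples according to the subgroup $W$ they generate gives
$$
\lt G_{tr} \rt = \lt G \rt^2 \sum_{W \in \mr{Iso}^{U_2,B_2}_G} \#\{(\mb{h}_1, \ldots, \mb{h}_{n-1}) \in W^{n-1} : \langle \mb{h}_1, \ldots, \mb{h}_{n-1} \rangle = W\}.
$$

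Next we count generating tuples with \eqref{number_gen}, applied with $n-1$ in place of $n$. For $n-1 \ge r(W)$ this gives
$$
\frac{\lt G_{tr}\rt}{\lt G\rt^{n+1}} = \sum_{W \in \mr{Iso}^{U_2,B_2}_G} \left(\frac{\lt W\rt}{\lt G\rt}\right)^{n-1} \prod_{i=1}^{r(W)} \bigl(1-p^{i-n}\bigr).
$$
The sum over $W$ is finite and independent of $n$, so we may take limits term by term. The product tends to $1$. By condition \ref{bilinear_cond1} for the symmetric case (2), which was verified in the preceding lemma, every $W \in \mr{Iso}^{U_2,B_2}_G$ satisfies $\lt W\rt \le \lt G\rt$. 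Hence $(\lt W\rt/\lt G\rt)^{n-1}$ tends to $1$ if $\lt W\rt = \lt G\rt$ and to $0$ otherwise. The limit therefore equals
$$
\#\{W \in \mr{Iso}^{U_2,B_2}_G : \lt W\rt = \lt G\rt\}.
$$

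This is exactly the count that appears in the symmetric-matrix case of Proposition \ref{prop: main terms for random matrix}. The correspondence \eqref{eq: first bijection} invoked there identifies it with $\sum_{H \le G} \lt \wedge^2 H \rt$, using that $U_2((g,h)) = P(g,h)$. This is the claim.

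No real obstacle arises. The only points needing care are bookkeeping: the factor $\lt G\rt^2$ from the free choice of $\mb{g}_n$ must combine with the denominator $\lt G\rt^{n+1}$ to leave exactly $\lt G\rt^{n-1}$, and the exponent shift from $n$ to $n-1$ in \eqref{number_gen} must be tracked.
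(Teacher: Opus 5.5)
Your proposal is correct and follows the paper's proof essentially step for step. You parametrize $G_{tr}$ via Lemma \ref{lem: graph lemma coset} by $\mb{g}_n$ and the differences, count generating tuples with \eqref{number_gen} at $n-1$, and reduce to counting the $W \in \mr{Iso}^{U_2,B_2}_G$ with $\lt W\rt = \lt G\rt$.

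One small citation slip: the correspondence you need is \eqref{eq: second bijection} (for $B_{\mr{Sym}}$ and $P$-vanishing type, giving $\wedge^2 H$), not \eqref{eq: first bijection}, which concerns $B_{\mr{Alt}}$ and gives $\mr{Sym}^2 H$.
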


\begin{proof}
By \eqref{number_gen} and Lemma \ref{lem: graph lemma coset}, we have
\begin{align*}
\lim_{n \to \infty} \frac{\lt G_{tr}\rt}{\lt G\rt^{n+1}}
& = \lim_{n \to \infty} \frac{1}{\lt G\rt^{n+1}} \sum_{W\in\mr{Iso}^{U_2,B_2}_G} \sum_{\substack{\mb{g}_n\in G^2\\\mb{g}_1,\cdots,\mb{g}_{n-1}\in\mb{g}_n+W\\\langle\mb{g}_1-\mb{g}_n,\cdots,\mb{g}_{n-1}-\mb{g}_n\rangle=W}}1\\
&=\lim_{n \to \infty} \frac{1}{\lt G\rt^{n+1}}\sum_{W\in\mr{Iso}^{U_2,B_2}_G}\sum_{\substack{\mb{g}\in G^2\\\mb{h}_1,\cdots,\mb{h}_{n-1}\in W\\\langle\mb{h}_1,\cdots,\mb{h}_{n-1}\rangle=W}}1\\
&=\lim_{n \to \infty} \sum_{W\in\mr{Iso}^{U_2,B_2}_G}\left(\frac{\lt W\rt}{\lt G\rt}\right)^{n-1}\cdot\prod_{i=1}^{r(W)}(1-p^{i-n}) \\
&= \#\{ W\in\mr{Iso}^{U_2,B_2}_G:\lt W\rt=\lt G\rt\}.
\end{align*}
The proposition follows from the correspondence \eqref{eq: second bijection}.
\end{proof}

\subsection{Bilinear pairings and maximal isotropic subgroups} \label{Sub31}
Let $P: G \times G \to \bQ/\bZ$ be a perfect bilinear pairing\footnote{In our application, $P$ will always be the pairing $\cdot$ defined in Section \ref{Sub21}. (More precisely, $P$ is the composition $G \times G \overset{\cdot}{\rightarrow} R \cong \frac{1}{p^d}\bZ/\bZ \hookrightarrow \bQ/\bZ$.) In this case, $B_{\mr{Sym}}=B_2$ and $B_{\mr{Alt}}=B_3$.}, and define two bilinear pairings $G^2 \times G^2 \to \bQ/\bZ$ by 
\begin{align*}
  B_{\mr{Alt}}((g,h),(g',h')) &:= P(g,h') - P(g',h), \\
  B_{\mr{Sym}}((g,h),(g',h')) &:= P(g,h') + P(g',h).
\end{align*}
Then $B_{\mr{Alt}}$ is alternating and $B_{\mr{Sym}}$ is symmetric.
Moreover, both pairings are perfect since $P$ is perfect.

\begin{prop} \label{prop3d}
There exist natural one-to-one correspondences:
\begin{align}
&\{\text{maximal isotropic subgroups of $(G^2,B_{\mr{Alt}})$}\}  \longleftrightarrow \bigsqcup_{H \le G} \Hom(\mr{Sym}^2(H), \bQ/\bZ) \label{eq: first bijection},\\
&\{\text{maximal isotropic subgroups of $(G^2,B_{\mr{Sym}})$ of $P$-vanishing type}\}  \longleftrightarrow \bigsqcup_{H \le G} \Hom(\wedge^2 H, \bQ/\bZ) \label{eq: second bijection}
\end{align}
with natural identifications
\begin{align*}
\Hom\left(\mr{Sym}^2(H), \bQ/\bZ\right) & = \{\text{symmetric bilinear pairings}~H\times H \to \bQ/\bZ\}, \\
\Hom\left(\wedge^2 H, \bQ/\bZ\right) & = \{\text{alternating bilinear pairings}~H\times H \to \bQ/\bZ\}.
\end{align*}
Here, a subgroup $W \le G^2$ is said to be of \emph{$P$-vanishing type} if $P(g,h) = 0$ for all $(g,h)\in W$.
\end{prop}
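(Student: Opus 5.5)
The plan is to construct the correspondence explicitly by projecting to the first coordinate. Given a maximal isotropic subgroup $W \le G^2$ (for $B_{\mr{Alt}}$ or $B_{\mr{Sym}}$), set $H := \pi_1(W) \le G$, where $\pi_1(g,h)=g$, and $K := W \cap (0 \times G)$. Isotropy with respect to $(g,h)$ and $(0,k)$ gives $P(g,k)=0$ for all $g \in H$. Hence $K \subseteq 0 \times H^{\perp}$, where $H^\perp$ is the annihilator of $H$ under $P$.

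First I show that maximality forces $K = 0 \times H^\perp$. Since $P$ is perfect, $|H^\perp| = |G|/|H|$. Also $|W| = |H||K|$, so $|W| \le |G|$ with equality iff $K = 0\times H^\perp$. Maximal isotropic subgroups of a perfect pairing on $G^2$ have order $|G|$: $W \subseteq W^\perp$ and $|W||W^\perp| = |G|^2$, and I will check that the upper bound is attained by the subgroup $0 \times G$ in the alternating case and by $H\times H^\perp$-type subgroups in the symmetric case. (If needed I will use that maximal means maximal under inclusion and argue $W = W^\perp$ via the standard Lagrangian argument for finite abelian groups.)

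Next I define the pairing. For $g, g' \in H$, choose lifts $(g,h), (g',h') \in W$ and set $\beta(g,g') := P(g,h')$. This is well defined: changing $h'$ by an element of $H^\perp$ does not change $P(g,h')$. It is bilinear because $W$ is a subgroup. Isotropy gives $P(g,h') = P(g',h)$ for $B_{\mr{Alt}}$, so $\beta$ is symmetric, and $P(g,h') = -P(g',h)$ for $B_{\mr{Sym}}$. In the $P$-vanishing case we also have $\beta(g,g) = P(g,h) = 0$, so $\beta$ is alternating. This is exactly why the vanishing condition is needed: antisymmetry alone does not imply alternating when $p=2$.

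Conversely, given $H \le G$ and a symmetric (resp.\ alternating) $\beta : H \times H \to \bQ/\bZ$, I extend $g' \mapsto \beta(\cdot,g')$ to $\Hom(H,\bQ/\bZ) \cong G/H^\perp$ using perfectness of $P$. This produces a homomorphism $\phi : H \to G/H^\perp$ with $P(g,\phi(g')) = \beta(g,g')$. I then set $W := \{(g,h) : g\in H,\ h \in \phi(g)\}$. This subgroup has order $|H||H^\perp| = |G|$, it is isotropic by the symmetry (resp.\ antisymmetry) of $\beta$, and it is $P$-vanishing because $\beta$ is alternating. Checking that the two constructions are mutually inverse is routine.

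I expect the main obstacle to be the order claim: that maximal isotropic subgroups are exactly the isotropic subgroups of order $|G|$. Under the inclusion-maximal reading, I must show that every isotropic $W$ (of $P$-vanishing type in the symmetric case) embeds in one of order $|G|$. I would do this using the same structure. An isotropic $W$ determines $H$, a subgroup $K' \le H^\perp$, and a partially defined $\beta$ on $H$ modulo $K'$. I would enlarge $W$ to $W + (0\times H^\perp)$, which remains isotropic and $P$-vanishing, and this has order $|G|$. That settles the obstacle.
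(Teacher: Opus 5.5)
Your proposal is correct and is essentially the paper's proof with the two factors of $G^2$ interchanged. The paper takes $H=\pi_2(W)$ and $K=\pi_1(W\cap(G\times\{0\}))$, gets $K=H^{\perp_P}$ by counting, defines $\beta(h_1,h_2)=P(f(h_1),h_2)$ via a section $h\mapsto(f(h),h)$, and inverts via $W_{H,\beta}$; this matches your lifts and your homomorphism $\phi:H\to G/H^{\perp}$. Your one addition is the enlargement $W\subseteq W+(0\times H^{\perp})$, which correctly shows that inclusion-maximal isotropic (and $P$-vanishing) subgroups have order $|G|$, whereas the paper simply uses $|W|=|G|$ directly. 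Rely on that enlargement rather than the parenthetical ``$W=W^{\perp}$ by the standard Lagrangian argument,'' because for $B_{\mr{Sym}}$ an element of $W^{\perp}\setminus W$ need not be isotropic.
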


\begin{proof}
For a subgroup $H \le G$, define
$$
H^{\perp_P}:= \{g \in G: P(g, h) = 0 \text{ for all $ h \in H$}\}.
$$
Fix either $B=B_{\mr{Alt}}$ or $B_{\mr{Sym}}$, and let $W$ be a maximal isotropic subgroup of $(G^2,B)$. Let $\pi_i : G^2 \to G$ denote the projection onto the $i$-th factor. Set $H := \pi_2(W) \le G$ and $K := \pi_1(W \cap (G \times \{0 \} )) \le G$. Then we have a short exact sequence
$$
0 \to K \to W \xrightarrow{\pi_2} H \to 0.
$$
For every $h \in H$, there exists $k_h\in G$ such that $(k_h, h) \in W$ and hence 
$$
0=B((k,0), (k_h, h)) = P(k, h)\text{ for every }k \in K,
$$
which implies that $K \subseteq H^{\perp_P}$. Since the above short exact sequence gives 
$$
\lt K\rt\lt H\rt = \lt W\rt = \lt G\rt = \lt H^{\perp_P}\rt \lt H\rt,
$$
we conclude that $K = H^{\perp_P}$. Choose a section $s \colon H \to W$ of $\pi_2$ (which need not be a group homomorphism), and write $s(h) = (f(h), h)$ for some function $f: H \to G$. Define a pairing $\beta: H \times H \to \bQ/\bZ$ by 
$$
\beta(h_1, h_2):= P(f(h_1), h_2). 
$$
If we replace $f$ by another function arising from a different choice of section, then $f(h)$ changes by an element of $K=H^{\perp_P}$ so the map $\beta$ is well-defined. Moreover, for $h_1, h_2 \in H$ it is straightforward to see that
$$
0 = B(s(h_1), s(h_2)) = 
\begin{cases} 
\beta(h_1, h_2) - \beta(h_2, h_1) &\text{if } B=B_{\mr{Alt}}, \\
\beta(h_1, h_2) + \beta(h_2, h_1) &\text{if } B=B_{\mr{Sym}}.
\end{cases}
$$
Hence $\beta$ is symmetric (resp. skew-symmetric) if $B=B_{\mr{Alt}}$ (resp. $B=B_{\mr{Sym}}$). Moreover, $\beta$ is bilinear as it is already linear with respect to the second factor. If $B=B_{\mr{Sym}}$ and $W$ is of $P$-vanishing type, then 
$$
\beta(h, h) = P(f(h), h) = 0
$$
so $\beta$ is alternating. Therefore, we have constructed a map $W \mapsto (H, \beta)$ appearing in \eqref{eq: first bijection} and \eqref{eq: second bijection}.

Conversely, let $H \le G$ and let $\beta:H \times H \to \bQ/\bZ$ be a symmetric (resp. alternating) bilinear pairing when $B=B_{\mr{Alt}}$ (resp. $B=B_{\mr{Sym}}$).
Choose a function $f: H \to G$ such that a class of $f(h)$ modulo $H^{\perp_P}$ corresponds to the homomorphism $\beta(h, -) \in \Hom(H, \bQ/\bZ)$ under the isomorphism 
$$
G/H^{\perp_P} \cong \Hom(H, \bQ/\bZ)\quad\text{sending}\quad g+H^{\perp_P}\mapsto(\phi_g:h\mapsto P(g,h)).
$$
Equivalently, for each $h \in H$, the element $f(h)$ is chosen so that
$$
P(f(h),  h') = \beta(h, h') \text{ for all }h'\in H.
$$ 
Define
$$
W_{H, \beta} := \{(f(h)+k, h) \in G^2 : h \in H, k \in H^{\perp_P}\} \subseteq G^2.
$$
Then $W_{H, \beta}$ is independent of the choice of $f$ and
$$
B((f(h)+k, h), (f(h')+k', h')) = \left\{\begin{matrix}
\beta(h,h')-\beta(h',h)=0 & \text{if } B=B_{\mr{Alt}}, \\
\beta(h,h')+\beta(h',h)=0 & \text{if } B=B_{\mr{Sym}}
\end{matrix}\right.
$$
for all $h,h' \in H$ and $k,k' \in H^{\perp_P}$. 
Therefore, $W_{H, \beta}$ is isotropic in $(G^2,B)$ and $\lt W_{H, \beta}\rt = \lt H^{\perp_P}\rt \lt H\rt =\lt G\rt$ (so it is maximal subgroup of $G^2$). If $\beta$ is alternating, then 
$$
P(f(h)+k, h) = P(f(h), h) = \beta(h,h) = 0
$$
for all $h \in H$, $k\in H^{\perp_P}$ so $W_{H, \beta}$ is of $P$-vanishing type. The map $(H,\beta) \mapsto W_{H,\beta}$ is the inverse of the map $W \mapsto (H, \beta)$ constructed above.
\end{proof}

\section{Error term of the moments}\label{Sec4}

\subsection{Some lemmas}
First we provide several lemmas that will be used in the proof of 
Theorem \ref{thm: general}.

\begin{lem}\label{lem: extreme point}
Let $I$ and $J$ be finite sets, $b_{i,j} \in R$ for all $i \in I$, $j \in J$ and $\al\in(0,\frac{1}{2})$. Then for every $(x_j)_{j\in J}\in\mr{Ran}(R,\al)^J$, we have 
$$
\lt\sum_{i\in I}\prod_{j\in J}\bE\left(\ze^{b_{i,j}x_j}\right)\rt\le\sum_{i\in I}\prod_{j\in J}\lt\bE\left(\ze^{b_{i,j}z_j}\right)\rt=\sum_{i\in I}\prod_{j\in J}\lt1-\al+\al\ze^{b_{i,j}t_j}\rt,
$$
where for every $j \in J$, the random variable $z_j$ takes values in $R$ with $\bP(z_j=0)=1-\al$ and $\bP(z_j=t_j)=\al$ for some $t_j\in R^\times$. 
\end{lem}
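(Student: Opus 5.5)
The plan is to start with the triangle inequality and then maximize coordinate by coordinate, using convexity together with a description of the extreme points of the set of $\al$-balanced distributions on $R$.

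\emph{First step.} The triangle inequality gives
$$
\lt\sum_{i\in I}\prod_{j\in J}\bE\left(\ze^{b_{i,j}x_j}\right)\rt\le\sum_{i\in I}\prod_{j\in J}\lt\bE\left(\ze^{b_{i,j}x_j}\right)\rt=:\Phi\left((\mu_j)_{j\in J}\right),
$$
where $\mu_j$ denotes the law of $x_j$ on $R$. This reduces the lemma to showing that $\Phi$ is maximized, over all families of $\al$-balanced laws, at a family in which each $\mu_j$ has the two-point form $(1-\al)\delta_{a_j}+\al\delta_{a_j+t_j}$ with $t_j\in R^\times$. Such a law gives
$$
\lt\bE\left(\ze^{bx}\right)\rt=\lt\ze^{ba_j}\rt\lt1-\al+\al\ze^{bt_j}\rt=\lt1-\al+\al\ze^{bt_j}\rt.
$$
So we may translate to $a_j=0$, which is exactly the random variable $z_j$ of the statement. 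The last equality in the lemma is then this elementary computation.

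\emph{Convexity in each coordinate.} Fix all $\mu_{j'}$ with $j'\neq j$. Then $\Phi$ is a nonnegative combination, with coefficients $\prod_{j'\ne j}\lt\bE(\ze^{b_{i,j'}x_{j'}})\rt$, of the functions $\mu_j\mapsto\lt\sum_{r\in R}\mu_j(r)\ze^{b_{i,j}r}\rt$. Each of these is the absolute value of a linear functional, hence convex in $\mu_j$, so $\Phi$ is convex in $\mu_j$.

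The set $\mathcal{B}_\al$ of $\al$-balanced laws on $R$ is the polytope
$$
\mathcal{B}_\al=\left\{\mu\in\bR_{\ge0}^R:\ \sum_{r\in R}\mu(r)=1,\ \mu(r+p R)\le 1-\al\ \text{for all } r\right\},
$$
where $\mu(r+pR)$ is the total mass on the residue class of $r$ modulo $p$. A convex function on a polytope attains its maximum at a vertex. Since $\mathcal{B}_\al$ is a product-free constraint on each $\mu_j$ separately, we can replace $\mu_1,\mu_2,\ldots$ one at a time by vertices without decreasing $\Phi$. This yields a maximizing family in which every $\mu_j$ is a vertex of $\mathcal{B}_\al$; $\Phi$ is continuous on a compact set, so the maximum exists.

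\emph{Main obstacle: identifying the vertices.} The key point is to show that every vertex of $\mathcal{B}_\al$ has the form $(1-\al)\delta_a+\al\delta_{a+t}$ with $a+t\not\equiv a \pmod p$, i.e.\ $t\in R^\times$. The argument runs as follows.
\begin{enumerate}
\item At a vertex, no residue class can carry two atoms whose masses can be shifted between them, since that would be a nontrivial perturbation preserving all constraints. So each residue class has at most one atom.
\item Suppose two or more classes were non-tight, meaning their mass is strictly below $1-\al$. Then mass could be shifted between two of them in either direction, again a nontrivial perturbation. Hence at most one non-tight class with positive mass can occur, apart from the tight ones.
\item Because $\al<\frac12$, we have $2(1-\al)>1$, so at most one class can be tight. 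A point mass is excluded because it would give a class of mass $1>1-\al$.
\end{enumerate}
Combining these, the support is exactly two atoms in different residue classes: one with mass $1-\al$ and the other with mass $\al$. This is precisely the claimed form, and substituting these vertices into $\Phi$ gives the right-hand side of the lemma.
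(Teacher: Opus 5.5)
Your proof is correct and takes essentially the same route as the paper. Both arguments use convexity in each coordinate to reduce to extreme points of the polytope of $\al$-balanced laws on $R$, and both identify those extreme points by mass-shifting perturbations as two-point laws $(1-\al)\delta_a+\al\delta_{a+t}$ with $t\in R^\times$, using $\al<\frac12$. The differences are cosmetic: you apply the triangle inequality before maximizing rather than invoking Bauer's principle on the absolute value of the multi-affine sum, and you spell out the coordinate-by-coordinate replacement, which the paper leaves implicit.
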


\begin{proof}
Consider the set
$$
C_{p^d, \al} := \left\{ (s_1, \ldots, s_{p^d}) \in [0,1]^{p^d} : \sum_{i=1}^{p^d} s_i=1 \text{ and } \sum_{i \equiv a \pmod{p}} s_i \le 1-\al  \text{ for all } 0 \le a \le p-1 \right\}.
$$
It is convex and compact, and has a bijection to $\mr{Ran}(R, \al)$ via the map $(s_1, \ldots, s_{p^d}) \mapsto x$ defined by $\bP(x=a)=s_a$ for each $a \in R$. Since $\sum_{i\in I}\prod_{j\in J}\bE(\ze^{b_{i,j}x_j})$ is affine in each variable $x_j$ (viewed as an element of $C_{p^d,\al}$), Bauer's maximum principle implies that its absolute value attains a maximum at a collection of extreme points $z_j' \in C_{p^d,\al}$, one for each $j \in J$. Let $(s_1, \ldots, s_{p^d})$ be an extreme point of $C_{p^d,\al}$. 

If $s_i,s_j>0$ for two distinct indices with $i\equiv j \pmod p$, then for sufficiently small $\epsilon>0$, we may replace $(s_i, s_j)$ with $(s_i \pm \epsilon, s_j \mp \epsilon)$ while keeping all constraints satisfied, which contradicts extremality. Therefore, within each residue class modulo $p$, at most one coordinate can be positive. If $s_i\ge s_j\ge s_k>0$, then we may replace $(s_j, s_k)$ with $(s_j \pm \epsilon, s_k \mp \epsilon)$, which contradicts extremality. Finally, if $s_i, s_j>0$ for some $i \not\equiv j \pmod{p}$ and $s_t=0$ for all $t \ne i,j$, then extremality forces $(s_i, s_j)=(1-\al, \al)$ or $(\al, 1-\al)$.

By the assumption $\al<\frac{1}{2}$, for each $j \in J$ there exist $u_j,v_j\in R$ with $u_j\not\equiv v_j\pmod p$ such that $\bP(z_j'=u_j)=1-\al$ and $\bP(z_j'=v_j)=\al$. Then it follows that
$$
\sum_{i\in I}\prod_{j\in J}\lt\bE(\ze^{b_{i,j}z'_j})\rt=\sum_{i\in I}\prod_{j\in J}\lt(1-\al)\ze^{b_{i,j}u_j}+\al\ze^{b_{i,j}v_j}\rt=\sum_{i\in I}\prod_{j\in J}\lt1-\al+\al\ze^{b_{i,j}(v_j-u_j)}\rt
$$
and taking $t_j:=v_j-u_j$ finishes the proof.
\end{proof}

\begin{lem}\label{lem_ine1}
For every nonzero $r \in R$ and all sufficiently large $n$, 
$$
\lt 1-\al_n+\al_n\ze^r \rt
\le 1-\frac{2c_1\log n}{n}\sin^2\frac{\pi r}{p^d}
\le1-\frac{\log n}{n}\frac{8c_1}{p^{2d}}.
$$
\end{lem}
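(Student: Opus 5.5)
We compute $\lt 1-\al_n+\al_n\ze^r\rt^2$ exactly and then use elementary inequalities. Write $\theta := \frac{2\pi r}{p^d}$, so $\ze^r = e^{i\theta}$. Then
$$
\lt 1-\al_n+\al_n e^{i\theta}\rt^2 = (1-\al_n)^2 + \al_n^2 + 2\al_n(1-\al_n)\cos\theta = 1 - 2\al_n(1-\al_n)(1-\cos\theta) = 1-4\al_n(1-\al_n)\sin^2\frac{\theta}{2}.
$$
Since $\frac{\theta}{2} = \frac{\pi r}{p^d}$, this reads $1-4\al_n(1-\al_n)\sin^2\frac{\pi r}{p^d}$. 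Using $\sqrt{1-x}\le 1-\frac{x}{2}$ for $x\in[0,1]$ (the quantity $x = 4\al_n(1-\al_n)\sin^2\frac{\pi r}{p^d}$ lies in $[0,1]$ because $4\al_n(1-\al_n)\le 1$), we get
$$
\lt 1-\al_n+\al_n\ze^r\rt \le 1-2\al_n(1-\al_n)\sin^2\frac{\pi r}{p^d}.
$$

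For the first inequality it remains to check that $2\al_n(1-\al_n) \ge \frac{2c_1\log n}{n}$ for large $n$. Substituting $\al_n = \frac{c\log n}{n}$, this is equivalent to $c(1-\al_n)\ge c_1$. This holds for all sufficiently large $n$, because $\al_n\to 0$ and $c_1<c$ by the choice of constants in Section \ref{Sub21}. The threshold on $n$ depends only on $c$ and $c_1$, not on $r$.

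For the second inequality, take $r\neq 0$ in $R=\bZ/p^d\bZ$ and choose its representative in $\{1,\ldots,p^d-1\}$. Then $\frac{\pi r}{p^d}\in[\frac{\pi}{p^d},\pi-\frac{\pi}{p^d}]$, so $\sin\frac{\pi r}{p^d}\ge \sin\frac{\pi}{p^d}$. By Jordan's inequality $\sin x\ge \frac{2x}{\pi}$ on $[0,\frac{\pi}{2}]$, we have $\sin\frac{\pi}{p^d}\ge \frac{2}{p^d}$, and hence $\sin^2\frac{\pi r}{p^d}\ge \frac{4}{p^{2d}}$. Multiplying by $\frac{2c_1\log n}{n}$ gives the bound $1-\frac{\log n}{n}\frac{8c_1}{p^{2d}}$.

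There is no genuine obstacle; the only point requiring care is that $c_1<c$ leaves room to absorb the factor $1-\al_n$.
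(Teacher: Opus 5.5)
Your proof is correct and takes essentially the same route as the paper. The paper defers to \cite[Lemma 2.5]{Lee25}, and its own Lemma \ref{lemma_ine2} carries out the identical computation for general modulus $m$: compute the squared modulus exactly, apply $\sqrt{1-x}\le 1-\frac{x}{2}$, absorb the factor $1-\al_n$ using $c_1<c$, and finish with $\sin^2\frac{\pi r}{p^d}\ge \frac{4}{p^{2d}}$.
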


\begin{proof}
The proof is identical to that of \cite[Lemma 2.5]{Lee25}.
\end{proof}

\begin{lem}
\label{lem: sin sum}
Let $H$ be a finite abelian group and let $L : H \to \bZ/m\bZ$ be a non-constant affine map. Then
$$
\sum_{h \in H} \sin^2 \frac{\pi\cdot L(h)}{m} = \frac{\lt H\rt}{2}.
$$
In particular, if $W$ is an isotropic subset of $(G^2, B)$ with $|W| = |G|$, then for every $t \in R^\times$ and $\mb{g} \in G^2\setminus W$, 
$$
\sum_{\mb{g}' \in W} \sin^2\frac{\pi t B(\mb{g}, \mb{g}')}{p^d } = \frac{|G|}{2} \quad\text{or} \quad\sum_{\mb{g}' \in W} \sin^2\frac{\pi t B(\mb{g}', \mb{g})}{p^d } = \frac{|G|}{2}.
$$
If $B \in \{B_2, B_3, B_4\}$, then both equalities hold. 
\end{lem}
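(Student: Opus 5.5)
We first prove the identity $\sum_{h\in H}\sin^2(\pi L(h)/m)=|H|/2$ for a non-constant affine map $L$. Using $\sin^2\theta=\frac{1}{2}-\frac{1}{4}(e^{2i\theta}+e^{-2i\theta})$, we get
$$
\sum_{h\in H}\sin^2\frac{\pi L(h)}{m}=\frac{|H|}{2}-\frac{1}{4}\sum_{h\in H}\left(e^{2\pi i L(h)/m}+e^{-2\pi i L(h)/m}\right).
$$
Write $L=a+\psi$ with $a\in\bZ/m\bZ$ and $\psi:H\to\bZ/m\bZ$ a homomorphism. Since $L$ is non-constant, $\psi\ne0$. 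Then $h\mapsto e^{\pm 2\pi i\psi(h)/m}$ is a nontrivial character of $H$: its kernel is $\ker\psi\neq H$. Each exponential sum therefore equals $e^{\pm2\pi i a/m}$ times a sum of a nontrivial character over $H$, which is $0$. This gives the first claim.

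For the ``in particular'' part, let $W$ be isotropic with $|W|=|G|$ and let $\mb{g}\notin W$. By condition \ref{bilinear_cond2}, $W=\mb{g}'+H$ for a subgroup $H\le G^2$ with $|H|=|G|$. Moreover, one of the maps $h\mapsto B(\mb{g},\mb{g}'+h)$ or $h\mapsto B(\mb{g}'+h,\mb{g})$ is a non-constant affine map $H\to R=\bZ/p^d\bZ$. Multiplying by a unit $t\in R^\times$ keeps it affine and non-constant, because $tL$ constant would imply $L$ constant, as $t$ is invertible. Reindexing the sum over $W$ by $h\in H$ and applying the identity with $m=p^d$ then gives the corresponding sum equal to $|H|/2=|G|/2$.

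For the last claim, with $B\in\{B_2,B_3,B_4\}$, condition \ref{bilinear_cond3} (verified in the preceding lemma) says that $B_2$ and $B_4$ are symmetric and $B_3$ is alternating. So $B(\mb{g}',\mb{g})=\pm B(\mb{g},\mb{g}')$. Since $\sin^2$ is even, the two sums coincide. Hence whichever one is guaranteed to equal $|G|/2$, the other equals it as well.

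The only point needing care is the non-vanishing of the character sum. This amounts to checking that the homomorphism part of the affine map is nonzero, which is immediate from non-constancy, so no real obstacle is expected.
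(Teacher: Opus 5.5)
Your proof is correct and follows essentially the same route as the paper. The paper also writes $\sin^2=(1-\cos)/2$ and kills the cosine sum because the homomorphic part of $L$ is nontrivial: it pushes the sum onto the image $\langle m'\rangle\cong\bZ/m''\bZ$ and sums roots of unity there, which amounts to your orthogonality of a nontrivial character. It then applies the identity to $L(*)=tB(\mb{g},\mb{g}'+*)$ or $tB(\mb{g}'+*,\mb{g})$ from condition (ii). Your treatment of the final claim, via $B(\mb{g}',\mb{g})=\pm B(\mb{g},\mb{g}')$ and the evenness of $\sin^2$, spells out what the paper leaves implicit; the paper relies on both affine maps being non-constant, as checked for $B_4$ in the earlier lemma.
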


\begin{proof}
Let $\varphi: H \to \bZ/m\bZ$ be the non-trivial homomorphism such that $L -\varphi$ is constant (say $a$) in $\bZ/m\bZ$. Write $\varphi(H) = \langle m' \rangle$ with $m'\mid m$ and $1\le m' < m$. For $m'' := m/m' >1$, we have
\begin{align*}
\sum_{h \in H} \sin^2 \frac{\pi\cdot L(h)}{m} &=\frac{1}{2}\sum_{h\in H}\left(1-\cos\left(\frac{2\pi\cdot L(h)}{m}\right)\right)\\
&=\frac{\lt H\rt}{2}-\frac{|H|}{2m''}\sum_{h'\in\bZ/m''\bZ}\cos\left(\frac{2\pi a}{m}+\frac{2\pi h'}{m''}\right)\\
&=\frac{\lt H\rt}{2}-\frac{|H|}{2m''}\mr{Re}\left(\exp\left(\frac{2\pi i a}{m}\right)\sum_{h'\in\bZ/m''\bZ}\exp\left(\frac{2\pi i h'}{m''}\right)\right)\\
&=\frac{\lt H\rt}{2}.
\end{align*}  
The second and third assertions follow from condition \ref{bilinear_cond2} in Section \ref{Sub23} by taking 
$$
L(*)= tB(\mb{g}, \mb{g}' + *) \text{ or } L(*)= tB(\mb{g'}+*, \mb{g})
$$ 
according to which of $B(\mb{g}, \mb{g}' + *)$ or $B(\mb{g'}+*, \mb{g})$ is a non-constant affine map on $H$.
\end{proof}

\subsection{Proof of Theorem \ref{thm: general}: Basic setting} \label{Sub42}

Define
$$
F(n):=\max_{t_{k,l}\in R^\times}\frac{1}{\lt G\rt^n}\sum_{(\mb{g}_1,\cdots,\mb{g}_n)\notin G_{tr}}\prod_{k\in[n]}\lt1-\al_n+\al_n\ze^{U(\mb{g}_k)t_{k,k,n}}\rt\cdot\prod_{(k,l)\in \Omega}\lt1-\al_n+\al_n \ze^{B(\mb{g}_k,\mb{g}_l)t_{k,l,n}}\rt
$$
and let $t_{k,l,n}\in R^\times$ ($(k,l) \in \Omega \cup \Delta$) be elements which achieve its maximum. Then Lemma \ref{lem: extreme point} (with $I = G^{2n} \setminus G_{tr}$ and $J=\Omega \cup \Delta$) implies that
$$
\lt f((x_{k,l,n}))-\frac{\lt G_{tr}\rt}{\lt G\rt^n}\rt=\frac{1}{\lt G\rt^n}\lt\sum_{(\mb{g}_1,\cdots,\mb{g}_n)\notin G_{tr}}\prod_{k\in[n]}\bE\left(\ze^{U(\mb{g}_k)x_{k,k,n}}\right)\cdot\prod_{(k,l)\in \Omega}\bE\left(\ze^{B(\mb{g}_k,\mb{g}_l)x_{k,l,n}}\right)\rt\le F(n).$$
Let $\ga \in (0, \frac{1}{\lt G \rt^2})$ be a sufficiently small real number that will be specified in the proof of Proposition \ref{prop_case2}. For given $\mb{g}_1,\cdots,\mb{g}_n\in G^2$, define
\begin{gather*}
i_{\mb{g}}=i_{\mb{g}}(\mb{g}_1,\cdots,\mb{g}_n):=\#\{k\in[n]:\mb{g}_k=\mb{g}\},\\ W=W_{\mb{g}_1,\cdots,\mb{g}_n}:=\{\mb{g}\in G^2:i_{\mb{g}}\ge \ga n\}.
\end{gather*}
Intuitively, $W$ is the set of elements of $G^2$ that appear frequently among $\mb{g}_1,\cdots,\mb{g}_n$. By the assumption $\ga < \frac{1}{\lt G \rt^2}$, the set $W$ is nonempty.

Now fix $U$ and $B$ satisfying conditions \ref{bilinear_cond1} and \ref{bilinear_cond2} in Section \ref{Sub23}, and let $\ga_1, \ga_2>0$ be real numbers such that $\ga_1$ is sufficiently large and $\ga_2$ is sufficiently small.
For every $n$-tuple $(\mb{g}_1,\cdots,\mb{g}_n) \in (G^2)^n$, exactly one of the following holds:
\begin{enumerate}
\item[$(\mc{C}_1)$]$W$ is not isotropic;
\item[$(\mc{C}_2)$]$W$ is isotropic but $\lt W\rt<\lt G\rt$;
\item[$(\mc{C}_3)$]$W$ is isotropic and $\lt W\rt=\lt G\rt$ (hence it is maximal);
\begin{enumerate}
\item[$(\mc{C}_{3,a}^W)$]$i_\mb{g}>\ga_1\frac{n}{\log n}$ for some $\mb{g}\notin W$;
\item[$(\mc{C}_{3,b}^W)$]$\mc{C}_{3,a}^W$ does not hold, and $\sum_{\mb{g}\notin W}i_\mb{g}>\ga_2\frac{n}{\log n}$;
\item[$(\mc{C}_{3,c}^W)$]$0<\sum_{\mb{g}\notin W}i_\mb{g}\le\ga_2\frac{n}{\log n}$;
\item[$(\mc{C}_{3,d}^W)$]$\sum_{\mb{g}\notin W}i_\mb{g}=0$ so that $i_\mb{g}=0$ for all $\mb{g}\notin W$.
\end{enumerate}
\end{enumerate}
For a condition $\mc{C}$, define 
$$
F_{\mc{C}}(n):=\frac{1}{\lt G\rt^n} \sum_{\substack{(\mb{g}_1,\cdots,\mb{g}_n)\notin G_{tr}\\\mc{C}\text{ is satisfied.}}}\prod_{k\in[n]}\lt1-\al_n+\al_n\ze^{U(\mb{g}_k)t_{k,k,n}}\rt\cdot\prod_{(k,l)\in \Omega}\lt1-\al_n+\al_n\ze^{B(\mb{g}_k,\mb{g}_l)t_{k,l,n}}\rt.
$$
Then $F(n)$ can be written as a finite sum
$$F(n)=F_{\mc{C}_1}(n)+F_{\mc{C}_2}(n)+\sum_{\substack{W\text{ isotropic}\\\lt W\rt=\lt G\rt}}(F_{\mc{C}_{3,a}^W}(n)+F_{\mc{C}_{3,b}^W}(n)+F_{\mc{C}_{3,c}^W}(n)+F_{\mc{C}_{3,d}^W}(n)).$$
Hence to prove Theorem \ref{thm: general}, it suffices to prove that each $F_{\mc{C}}(n)$ converges to 0 as $n\to\infty$.

\subsection{Proof of Theorem \ref{thm: general}: Cases \texorpdfstring{$\mc{C}_1$}{C1} and \texorpdfstring{$\mc{C}_2$}{C2}} \label{Sub43}

\begin{prop}\label{prop_case1}
$\lim_{n\to\infty}F_{\mc{C}_1}(n)=0$.
\end{prop}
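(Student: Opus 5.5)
The plan is to count tuples falling in case $\mc{C}_1$ trivially and to beat that count with the decay coming from many non-isotropic pairs. If $W$ is not isotropic, there exist $\mb{g},\mb{g}'\in W$ (possibly equal) with $B(\mb{g},\mb{g}')\neq 0$. There are at most $|G|^4$ choices of such a pair, so we fix it; a union bound then costs only a constant factor. By definition of $W$ we have $i_{\mb{g}},i_{\mb{g}'}\ge \ga n$.

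The key point is that the indices $k$ with $\mb{g}_k=\mb{g}$ and the indices $l$ with $\mb{g}_l=\mb{g}'$ produce many pairs $(k,l)\in\Omega$ with $B(\mb{g}_k,\mb{g}_l)=B(\mb{g},\mb{g}')\ne 0$. When $\mb{g}\neq\mb{g}'$ there are $i_{\mb{g}}i_{\mb{g}'}\ge \ga^2n^2$ ordered pairs $(k,l)$. For $\Omega=\Omega_2$ we use condition \ref{bilinear_cond3}: then $B(\mb{g}',\mb{g})=\pm B(\mb{g},\mb{g}')\neq0$, so whichever of $k<l$ or $l<k$ holds, the pair contributes. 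When $\mb{g}=\mb{g}'$ there are at least $\binom{\ga n}{2}$ such pairs. Either way we get at least $\ga^2n^2/3$ pairs for large $n$.

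For each such pair, Lemma \ref{lem_ine1} bounds the corresponding factor by $1-\frac{8c_1\log n}{p^{2d}n}$. Every other factor is at most $1$. Hence each tuple contributes at most
$$
\Bigl(1-\frac{8c_1\log n}{p^{2d} n}\Bigr)^{\ga^2n^2/3}\le \exp\Bigl(-\frac{8c_1\ga^2}{3p^{2d}}\, n\log n\Bigr).
$$
The number of tuples is at most $|G|^{2n}$, so
$$
F_{\mc{C}_1}(n)\le |G|^4\cdot |G|^{-n}|G|^{2n}\, e^{-c' n\log n}=|G|^4 e^{n\log|G|-c'n\log n},
$$
where $c'>0$ depends only on $\ga,c_1,p,d$. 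This tends to $0$ for any fixed $\ga>0$, since $n\log n$ dominates $n$.

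The only step needing care is making sure the counted pairs actually lie in $\Omega$ and avoid the diagonal $\Delta$. This is handled by the symmetric/alternating condition \ref{bilinear_cond3} for $\Omega_2$ and by discarding $k=l$ when $\mb{g}=\mb{g}'$; it costs only constants. No delicate cancellation is needed in this case: the quadratic number of bad pairs overwhelms the exponential entropy $|G|^n$, regardless of how small $\ga$ is chosen later.
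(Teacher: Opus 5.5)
Your proof is correct and follows the same approach as the paper's. There are quadratically many index pairs in $\Omega$ with nonzero $B$-value, each corresponding factor is bounded via Lemma \ref{lem_ine1}, and the resulting $e^{-\Theta(n\log n)}$ decay beats the trivial count $|G|^{n}$. Your explicit use of condition \ref{bilinear_cond3} to place the counted pairs in $\Omega_2$ makes precise a step the paper passes over quickly; the union-bound factor $|G|^4$ is harmless but unnecessary.
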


\begin{proof}
Suppose that $B(\mb{g},\mb{g}')\ne0$ for some $\mb{g},\mb{g}'\in W$. Then there are at least $\genfrac(){0pt}{}{\lf \ga n\rf}{2}=\Theta(n^2)$ pairs of $(k,l) \in \Omega$ such that $B(\mb{g}_k,\mb{g}_l)\ne0$. By Lemma \ref{lem_ine1},
\begin{align*}
F_{\mc{C}_1}(n)&=\frac{1}{\lt G\rt^{n}} \sum_{\substack{\mb{g}_1,\cdots,\mb{g}_n\in G^2\\\mc{C}_1\text{ is satisfied}}}\prod_{k\in[n]}\lt1-\al_n+\al_n\ze^{t_{k,k,n} U(\mb{g}_k)}\rt\prod_{(k,l)\in \Omega}\lt1-\al_n+\al_n\ze^{t_{k,l,n}  B(\mb{g}_k,\mb{g}_l)}\rt\\&\le\frac{1}{\lt G\rt^n}\sum_{\substack{\mb{g}_1,\cdots,\mb{g}_n\in G^2\\\mc{C}_1\text{ is satisfied}}}\left(1-\frac{\log n}{n}\frac{8c_1}{p^{2d}}\right)^{\Theta(n^2)}\\
&\le \lt G\rt^n\left(1-\frac{\log n}{n}\frac{8c_1}{p^{2d}}\right)^{\Theta(n^2)}\\
&\le\lt G\rt^n\exp\left(-\frac{\log n}{n}\frac{8c_1}{p^{2d}}\cdot \Theta(n^2)\right),\\
&=\lt G\rt^n\exp\left(-\frac{8c_1}{p^{2d}}\cdot\Theta(n\log n)\right),
\end{align*}
and the last term converges to $0$ as $n\to\infty$.
\end{proof}

\begin{prop}\label{prop_case2}
$\lim_{n\to\infty} F_{\mc{C}_2}(n)=0$.
\end{prop}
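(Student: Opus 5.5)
We need to show $F_{\mc{C}_2}(n)\to 0$, where $W$ (the set of frequently occurring elements) is isotropic but $|W|<|G|$. The idea is that each $\mb{g}\notin W$ appears fewer than $\ga n$ times. Either there are many ``infrequent'' indices, and these create $\Theta(n^2)$ nonzero pairings, or the tuple is concentrated on $W$, and the count of such tuples is at most about $|W|^n\le (|G|/p)^n$. The factor $|G|^{-n}$ then kills it.

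\textbf{Splitting.} Let $m:=\sum_{\mb{g}\notin W} i_{\mb{g}}$ be the number of indices $k$ with $\mb{g}_k\notin W$. Fix a small $\delta>0$, to be chosen depending on $\ga$ and $|G|$, and split $\mc{C}_2$ into the subcases $m\le \delta n$ and $m>\delta n$. In both subcases every factor of the product is at most $1$.

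\textbf{Small $m$.} Bound all factors by $1$ and count tuples. Since $W$ is isotropic and $|W|<|G|$, there are fewer than $2^{|G|^2}$ choices of $W$. Given $W$, there are at most $\binom{n}{m}$ choices for the set of indices outside $W$, and then at most $|G|^{2m}|W|^{n-m}$ tuples. Because $W$ is an isotropic subset, condition \ref{bilinear_cond1} gives $|W|\le |G|-1$. (If $W$ is a coset, one even has $|W|\le |G|/p$, but $|G|-1$ suffices.) Hence the contribution is at most
\[
2^{|G|^2}\sum_{m\le\delta n}\binom{n}{m}|G|^{2m}\Big(\frac{|G|-1}{|G|}\Big)^{n-m}
\le 2^{|G|^2}\, e^{n(H(\delta)+2\delta\log|G|)}\Big(1-\tfrac1{|G|}\Big)^{(1-\delta)n},
\]
where $H$ is the binary entropy. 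Choosing $\delta$ small makes this exponentially small.

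\textbf{Large $m$.} This is where $\ga$ must be chosen. Each $\mb{g}\notin W$ satisfies $i_{\mb{g}}<\ga n$, so the $m>\delta n$ outside indices are spread over at least $\delta/\ga$ distinct values. I would look for $\Theta(n^2)$ pairs $(k,l)\in\Omega$ with $B(\mb{g}_k,\mb{g}_l)\ne 0$ or, on the diagonal, $U\ne0$; then Lemma \ref{lem_ine1} gives a bound $|G|^{2n}\exp(-\Theta(n\log n))\to0$, exactly as in Proposition \ref{prop_case1}.

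\textbf{Main obstacle.} Large $m$ does not automatically produce many nonzero pairings: the outside elements might all be mutually $B$-isotropic and isotropic with $W$. The fix I would try is to avoid a fixed threshold $\ga$ and instead run a pigeonhole over thresholds. Choose a decreasing chain $\ga=\ga^{(N)}<\dots<\ga^{(0)}=1/|G|^2$ with $N>2^{|G|^2}$. The sets $W^{(j)}=\{\mb{g}: i_{\mb{g}}\ge\ga^{(j)}n\}$ are nested, so some consecutive pair satisfies $W^{(j)}=W^{(j+1)}$. Then there is a gap: every element occurs either at least $\ga^{(j)}n$ times or fewer than $\ga^{(j+1)}n$ times.

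For that level $j$, define $W'$ as the set of elements occurring at least $\ga^{(j)}n$ times. Either $W'$ is non-isotropic, which is handled as in $\mc{C}_1$, or it is isotropic. In the isotropic case, elements outside $W'$ are rare relative to $\ga^{(j)}n$, and the small-$m$ counting above, with $W'$ in place of $W$ and $\delta$ taken comparable to $\ga^{(j)}$, still applies if $|W'|<|G|$. Since $\ga$ can be taken to be $\ga^{(N)}$, this is where ``$\ga$ specified in the proof'' enters. Tuples whose $W'$ is maximal are handled instead by the $\mc{C}_3$ analysis for $W'$, so I expect the proof to reduce $\mc{C}_2$ to the other cases through this choice of $\ga$. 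Making this reduction consistent with the case decomposition is the delicate step.
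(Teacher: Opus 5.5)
Your small-$m$ estimate is correct, but the proposal does not prove the statement. The large-$m$ branch is left as an open ``main obstacle,'' and the threshold-pigeonhole scheme you sketch is not a proof. It replaces $W$ by a tuple-dependent $W'$ defined with a different threshold $\ga^{(j)}$, which is incompatible with the fixed decomposition $F(n)=F_{\mc{C}_1}(n)+F_{\mc{C}_2}(n)+\sum_W(\cdots)$. It also sends tuples that by definition lie in $\mc{C}_2$ to the $\mc{C}_3$ estimates, which were proved only for the fixed threshold $\ga$. You flag this consistency issue yourself and leave it unresolved.

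The missing observation is that the large-$m$ case is empty for a suitable $\ga$, and you were one line away from it. Every $\mb{g}\notin W$ has $i_{\mb{g}}<\ga n$, and there are fewer than $|G|^2$ such $\mb{g}$, so $m=\sum_{\mb{g}\notin W}i_{\mb{g}}<\ga(|G|^2-|W|)n\le\ga|G|^2n$ automatically. Equivalently, your ``at least $\delta/\ga$ distinct values'' is impossible once $\delta/\ga\ge|G|^2$. So reverse your order of choices. First fix $\delta$ depending only on $|G|$ so that $e^{H(\delta)+2\delta\log|G|}(1-1/|G|)^{1-\delta}<1$, and then take $\ga\le\delta/|G|^2$.

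This is legitimate because $\ga$ is exactly the parameter to be specified in this proposition, and the other cases hold for every $\ga>0$. Propositions \ref{prop_case1}, \ref{prop_case3b}, \ref{prop_case3c} and \ref{prop_case3d} impose nothing on $\ga$, and $\ga_1$ in Proposition \ref{prop_case3a} is chosen after $\ga$.

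With that choice, your small-$m$ count is the whole proof, and it is precisely the paper's argument, with $\ga_0=\ga|G|^2$ in the role of your $\delta$. The paper bounds every factor by $1$ and bounds the number of tuples with $W_{\mb{g}_1,\ldots,\mb{g}_n}=W_0$ by $\sum_{i<\ga_0 n}|G|^{2i}\binom{n}{i}|W_0|^{n-i}$. It then uses $|W_0|\le|G|-1$ and the entropy bound for $\binom{n}{\lfloor\ga_0 n\rfloor}$, and chooses $\ga$ so that $\frac{|G|-1}{|G|}\cdot\frac{|G|^{2\ga_0}}{\ga_0^{\ga_0}(1-\ga_0)^{1-\ga_0}}<1$. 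No pairing factors are needed in $\mc{C}_2$ at all.
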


\begin{proof}
We have
\begin{align*}
F_{\mc{C}_2}(n)&=\frac{1}{\lt G\rt^{n}} \sum_{\substack{\mb{g}_1,\cdots,\mb{g}_n\in G^2\\\mc{C}_2\text{ is satisfied}}}\prod_{k\in[n]}\lt1-\al_n+\al_n\ze^{t_{k,k,n} U(\mb{g}_k)}\rt\prod_{(k,l)\in \Omega}\lt1-\al_n+\al_n\ze^{t_{k,l,n} B(\mb{g}_k,\mb{g}_l)}\rt\\&\le\frac{1}{\lt G\rt^{n}}\sum_{\substack{W_0\text{ isotropic}\\\lt W_0\rt<\lt G\rt}}\sum_{\substack{\mb{g}_1,\cdots,\mb{g}_n\in G^2\\ W_{\mb{g}_1,\cdots,\mb{g}_n}=W_0}}1\\&=\frac{1}{\lt G\rt^{n}}\sum_{\substack{W_0\text{ isotropic}\\0< \lt W_0\rt<\lt G\rt}} \#\left\{(\mb{g}_1,\cdots,\mb{g}_n)\in G^{2n} : W_{\mb{g}_1,\cdots,\mb{g}_n}=W_0\right\}.
\end{align*}
For each nonempty isotropic subset $W_0$ in $G^2$, 
\begin{align*}
&\#\left\{(\mb{g}_1,\cdots,\mb{g}_n)\in G^{2n} :W_{\mb{g}_1,\cdots,\mb{g}_n}=W_0\right\} \\
=\,&\sum_{\substack{i_\mb{g}\ge\ga n\text{ for all }\mb{g}\in W_0\\i_\mb{g}<\ga n\text{ for all }\mb{g}\notin W_0 \\ \sum_{\mb{g} \in G^2} i_\mb{g}=n}}\frac{n!}{\prod_{\mb{g}\in G^2}i_{\mb{g}}!}\\
=\,&\sum_{\substack{i_\mb{g}<\ga n\text{ for all }\mb{g}\notin W_0}}\frac{(\sum_{\mb{g}\notin W_0}i_\mb{g})!}{\prod_{\mb{g}\notin W_0}i_\mb{g}!}\left(\sum_{\substack{i_\mb{g}\ge\ga n\text{ for all }\mb{g}\in W_0 \\ \sum_{\mb{g} \in W_0} i_\mb{g}=n-\sum_{\mb{g} \notin W_0}i_\mb{g}}}\frac{n!}{(\sum_{\mb{g}\notin W_0}i_\mb{g})! \cdot\prod_{\mb{g}\in W_0}i_\mb{g}!}\right)\\
\le\,&\sum_{i<\ga(\lt G\rt^2-\lt W_0\rt)n} \left(\sum_{\substack{i_\mb{g}\ge 0 \text{ for all }\mb{g}\notin W_0 \\ \sum_{\mb{g} \notin W_0} i_\mb{g} = i}}\frac{i!}{\prod_{\mb{g}\notin W_0}i_\mb{g}!} \right) \left(\sum_{\substack{i_\mb{g}\ge\ga n\text{ for all }\mb{g}\in W_0\\ \sum_{\mb{g} \in W_0} i_\mb{g} = n-i}}\frac{n!}{i!\cdot\prod_{\mb{g}\in W_0}i_\mb{g}!}\right)\\
=\,&\sum_{\substack{i<\ga(\lt G\rt^2-\lt W_0\rt)n}}(\lt G\rt^2-\lt W_0\rt)^{i}\genfrac(){0pt}{}{n}{i}\left(\sum_{\substack{i_\mb{g}\ge\ga n\text{ for all }\mb{g}\in W_0\\ \sum_{\mb{g} \in W_0} i_\mb{g} = n-i}}\frac{(n-i)!}{\prod_{\mb{g}\in W_0}i_\mb{g}!}\right)\\
\le\,&\sum_{\substack{i<\ga\lt G\rt^2n}}\lt G\rt^{2i}\genfrac(){0pt}{}{n}{i}\lt W_0\rt^{n-i}.
\end{align*}
Set $\ga_0=\ga\lt G\rt^2$, then
$$
\sum_{\substack{i<\ga_0n}}\lt G\rt^{2i}\genfrac(){0pt}{}{n}{i}\lt W_0\rt^{n-i} \le\ga_0n\lt G\rt^{2\ga_0n}\genfrac(){0pt}{}{n}{\lfloor \ga_0n\rfloor}\lt W_0\rt^{n}.
$$
By \cite[Example 12.1.3]{CT06}, 
$$
\binom{n}{\lf \ga_0 n \rf} \le 2^{nH(\ga_0)} < \frac{1}{(\ga_0^{\ga_0}(1-\ga_0)^{1-\ga_0})^n},
$$
where $H(q) = - q \log q - (1-q) \log (1-q)$ is the binary entropy function.
It follows that
\begin{equation} \label{eq_case2}
\begin{split}
F_{\mc{C}_2}(n) &\le\frac{1}{\lt G\rt^n}\sum_{\substack{W_0\text{ isotropic}\\\lt W_0\rt<\lt G\rt}}\ga_0n\left(\frac{\lt G\rt^{2\ga_0}}{\ga_0^{\ga_0}(1-\ga_0)^{1-\ga_0}}\right)^n\lt W_0\rt^n\\&\le N_G\cdot\ga_0n\left(\frac{\lt G\rt-1}{\lt G\rt}\right)^n\left(\frac{\lt G\rt^{2\ga_0}}{\ga_0^{\ga_0}(1-\ga_0)^{1-\ga_0}}\right)^n,
\end{split}
\end{equation}
where $N_G$ is the number of isotropic subsets $W_0 \subseteq G^2$ with $\lt W_0 \rt < \lt G \rt$.
Since 
$$
\lim_{\ga_0 \to 0^+} \frac{\lt G\rt^{2\ga_0}}{\ga_0^{\ga_0}(1-\ga_0)^{1-\ga_0}} = 1,
$$
we may choose $\ga>0$ sufficiently small so that $\ga_0=\ga\lt G\rt^2$ satisfies
$$
\frac{\lt G \rt-1}{\lt G\rt} \cdot \frac{\lt G\rt^{2\ga_0}}{\ga_0^{\ga_0}(1-\ga_0)^{1-\ga_0}} < 1.
$$
In this case, the last term of \eqref{eq_case2} converges to $0$ as $n\to\infty$.
\end{proof}

\subsection{Proof of Theorem \ref{thm: general}: Case \texorpdfstring{$\mc{C}_3$}{C3}} \label{Sub44}

\begin{prop}\label{prop_case3a}
$\lim_{n\to\infty}F_{\mc{C}^W_{3,a}}=0$ for every isotropic $W \subseteq G^2$ with $\lt W\rt=\lt G\rt$.
\end{prop}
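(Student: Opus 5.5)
Fix an isotropic $W \subseteq G^2$ with $|W| = |G|$. The plan is to exploit one element $\mb{g} \notin W$ with $i_\mb{g} > \ga_1 \frac{n}{\log n}$ and play it against the $\Theta(n)$ indices landing in $W$. By condition \ref{bilinear_cond2} and Lemma \ref{lem: sin sum}, for such $\mb{g}$ and every unit $t$, at least one of $\sum_{\mb{g}'\in W}\sin^2\frac{\pi t B(\mb{g},\mb{g}')}{p^d}$ or $\sum_{\mb{g}'\in W}\sin^2\frac{\pi t B(\mb{g}',\mb{g})}{p^d}$ equals $|G|/2$. In particular, at least one $\mb{g}'\in W$ has $B(\mb{g},\mb{g}')\neq 0$ (or $B(\mb{g}',\mb{g})\ne 0$). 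Since $i_{\mb{g}'} \ge \ga n$ for every $\mb{g}' \in W$, there are at least $\ga n\cdot \ga_1\frac{n}{\log n}$ pairs $(k,l)$ with $\mb{g}_k=\mb{g}$, $\mb{g}_l=\mb{g}'$ and a nonzero $B$-value. When $\Omega=\Omega_2$ we use condition \ref{bilinear_cond3}: $B$ is symmetric or alternating, so whichever of $(k,l),(l,k)$ lies in $\Omega$ still carries a nonzero value up to sign, and at least half of these pairs lie in $\Omega$ in either orientation.

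Next apply Lemma \ref{lem_ine1} to each such pair. Each contributes a factor of at most $1-\frac{8c_1}{p^{2d}}\frac{\log n}{n}$, so the product is bounded by
\[
\exp\Bigl(-\tfrac{8c_1}{p^{2d}}\cdot\tfrac{\log n}{n}\cdot\tfrac{\ga\ga_1 n^2}{2\log n}\Bigr)=\exp\Bigl(-\tfrac{4c_1\ga\ga_1}{p^{2d}}\, n\Bigr).
\]
All other factors are bounded by $1$. Summing over all tuples gives a trivial count of at most $|G|^{2n}$ tuples, so
\[
F_{\mc{C}^W_{3,a}}(n)\le |G|^{n}\exp\Bigl(-\tfrac{4c_1\ga\ga_1}{p^{2d}} n\Bigr).
\]
Because $\ga$ has already been fixed in Proposition \ref{prop_case2}, we may now choose $\ga_1$ large enough that $\frac{4c_1\ga\ga_1}{p^{2d}}>\log|G|$. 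This is exactly the freedom "$\ga_1$ sufficiently large" allows, and it makes the bound tend to $0$.

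The only delicate points are bookkeeping. First, the pairs counted must be distinct elements of $\Omega$. This holds because $\mb{g}\ne\mb{g}'$ forces $k\ne l$, and each unordered pair is counted once. Second, when $\Omega=\Omega_1$ and only one orientation (say $B(\mb{g}',\mb{g})$) is nonzero, we take the pairs $(l,k)$ instead, which is fine since $\Omega_1$ contains both orientations. I expect no real obstacle: the Lemma \ref{lem: sin sum} strength is not even needed here, only the existence of a single $\mb{g}'\in W$ paired nontrivially with $\mb{g}$. That existence follows from condition \ref{bilinear_cond2}, since a non-constant affine map on $H$ cannot vanish identically on the coset $W=\mb{g}'+H$.
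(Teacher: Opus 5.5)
Your proposal is correct and is essentially the paper's proof: you pair the more than $\ga_1\frac{n}{\log n}$ indices carrying some $\mb{g}\notin W$ with the at least $\ga n$ indices carrying a $\mb{g}'\in W$ whose pairing with $\mb{g}$ is nonzero (guaranteed by condition \ref{bilinear_cond2}). You then apply Lemma \ref{lem_ine1} to get a factor $\exp(-\Theta(\ga\ga_1 n))$, and beat the trivial $|G|^n$ count by taking $\ga_1$ large. The only difference is your factor $\frac{1}{2}$ when $\Omega=\Omega_2$: it is unnecessary, since each such pair occurs exactly once in $\Omega_2$ and still has nonzero $B$-value by condition \ref{bilinear_cond3} (as in the paper's footnote), but it is harmless.
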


\begin{proof}
Suppose that $i_\mb{g}>\ga_1\frac{n}{\log n}$ for some $\mb{g}\notin W$. Then there exists $\mb{g}'\in W$ such that either $B(\mb{g},\mb{g}')\ne0$ or $B(\mb{g}',\mb{g})\ne0$. Without loss of generality, assume that  $B(\mb{g},\mb{g}')\ne0$.\footnote{Except for the non-symmetric case, this implies that $B(\mb{g}',\mb{g}) \ne 0$.} 
By Lemma \ref{lem_ine1},
\begin{align*}
F_{\mc{C}_{3,a}^W}(n)&=\frac{1}{\lt G\rt^{n}} \sum_{\substack{\mb{g}_1,\cdots,\mb{g}_n\in G^2\\\mc{C}_{3,a}^W\text{ is satisfied}}}\prod_{k\in[n]}\lt1-\al_n+\al_n\ze^{t_{k,k,n} U(\mb{g}_k)}\rt\prod_{(k,l)\in \Omega}\lt1-\al_n+\al_n\ze^{t_{k,l,n} B(\mb{g}_k,\mb{g}_l)}\rt\\
&\le\frac{1}{\lt G\rt^n}\sum_{\substack{\mb{g}_1,\cdots,\mb{g}_n\in G^2\\\mc{C}_{3,a}^W\text{ is satisfied}}}\left(1-\frac{\log n}{n}\frac{8c_1}{p^{2d}}\right)^{\ga_1\frac{n}{\log n} \cdot \ga n}\\
&=\lt G\rt^n\left(1-\frac{\log n}{n}\frac{8c_1}{p^{2d}}\right)^{\ga\ga_1\frac{n^2}{\log n}}\\
&\le\lt G\rt^n\exp\left(-\frac{\log n}{n}\frac{8c_1}{p^{2d}} \cdot\ga\ga_1\frac{n^2}{\log n}\right)\\
&=\lt G\rt^n\exp\left(-\frac{8c_1\ga\ga_1}{p^{2d}} n\right).
\end{align*}
Now choose a sufficiently large $\ga_1$ (depending on $\ga$) so that the last term converges to $0$ as $n\to\infty$.
\end{proof}

\begin{prop}\label{prop_case3b}
$\lim_{n\to\infty}F_{\mc{C}^W_{3,b}}=0$ for every isotropic $W \subseteq G^2$ with $\lt W\rt=\lt G\rt$.
\end{prop}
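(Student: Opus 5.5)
The plan is to use only the crude estimate of Lemma \ref{lem_ine1}. In case $\mc{C}^W_{3,b}$ the number $m:=\sum_{\mb{g}\notin W} i_{\mb{g}}$ of indices $k$ with $\mb{g}_k\notin W$ is larger than $\ga_2\frac{n}{\log n}$. For each such index the off-$W$ element interacts nontrivially with a whole block of size at least $\ga n$ inside $W$. Each off-$W$ index will therefore contribute a polynomial saving $n^{-\kappa}$ for some $\kappa>0$. This saving beats the combinatorial cost, which is of order $\log n$ per index, of choosing the off-$W$ positions. The more refined Lemma \ref{lem: sin sum} should only be needed in case $\mc{C}^W_{3,c}$, where $m$ is small.

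\emph{Step 1: pointwise bound.} Fix $(\mb{g}_1,\ldots,\mb{g}_n)$ satisfying $\mc{C}^W_{3,b}$. Bound every factor indexed by $\Delta$, and every factor for a pair not of the type below, trivially by $1$. Take an index $k$ with $\mb{g}:=\mb{g}_k\notin W$. As in the proof of Proposition \ref{prop_case3a}, condition \ref{bilinear_cond2} gives some $\mb{g}'\in W$ with $B(\mb{g},\mb{g}')\neq0$ or $B(\mb{g}',\mb{g})\neq0$. Since $\mb{g}'\in W$, there are at least $\ga n$ indices $l$ with $\mb{g}_l=\mb{g}'$.

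We need the pair $\{k,l\}$ to appear in $\Omega$ with the right orientation. If $\Omega=\Omega_1$, both $(k,l)$ and $(l,k)$ lie in $\Omega$, so we keep whichever one has nonzero $B$. If $\Omega=\Omega_2$, condition \ref{bilinear_cond3} makes $B$ symmetric or alternating, so both orientations are nonzero and the unique ordered pair in $\Omega_2$ works.

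These pairs $\{k,l\}$ (with $k$ off $W$ and $l$ in $W$) are distinct for distinct $k$, so no factor is used twice. Lemma \ref{lem_ine1} then gives, for $\kappa:=8c_1\ga/p^{2d}$,
$$
\prod_{k\in[n]}\lt1-\al_n+\al_n\ze^{U(\mb{g}_k)t_{k,k,n}}\rt\prod_{(k,l)\in\Omega}\lt1-\al_n+\al_n\ze^{B(\mb{g}_k,\mb{g}_l)t_{k,l,n}}\rt\le\left(1-\frac{\log n}{n}\frac{8c_1}{p^{2d}}\right)^{\ga n\cdot m}\le n^{-\kappa m}.
$$

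\emph{Step 2: counting.} The number of tuples with exactly $m$ entries outside $W$ is at most $\binom{n}{m}|G|^{2m}|W|^{n-m}$. Since $|W|=|G|$, dividing by $|G|^n$ leaves $\binom{n}{m}|G|^m\le\left(\frac{e n|G|}{m}\right)^m$. Using $m>\ga_2\frac{n}{\log n}$ we get $\frac{en|G|}{m}\le\frac{e|G|\log n}{\ga_2}$. Hence
$$
F_{\mc{C}^W_{3,b}}(n)\le\sum_{m>\ga_2 n/\log n}\left(\frac{e|G|\log n}{\ga_2}\,n^{-\kappa}\right)^m .
$$
For large $n$ the base is less than $\frac12$, so the sum is at most $2\cdot 2^{-\ga_2 n/\log n}$, which tends to $0$.

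Note that the extra restriction in $\mc{C}^W_{3,b}$ (that $\mc{C}^W_{3,a}$ fails) is not needed for this bound. The argument works for any fixed $\ga_2>0$, so it imposes no constraint on the later choice of $\ga_2$. The only step needing care is the orientation bookkeeping in Step 1, which \ref{bilinear_cond2} and \ref{bilinear_cond3} settle. I expect no real obstacle in this case; the genuine difficulty is deferred to $\mc{C}^W_{3,c}$, where $m$ is too small for the crude bound.
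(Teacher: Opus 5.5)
Your proof is correct, but it distributes the saving and the counting differently from the paper.

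The paper argues as follows:
\begin{itemize}
\item By pigeonhole it finds a single value $\mb{g}\notin W$ with $i_{\mb{g}}>\frac{\ga_2}{|G|^2}\frac{n}{\log n}$.
\item Pairing only these indices with a $\ga n$-block in $W$ gives an exponential factor $K^{-n}$.
\item It then has to show that the number of admissible tuples is $|G|^n e^{o(n)}$. For this count it genuinely needs the failure of $\mc{C}^W_{3,a}$: the cap $i_{\mb{g}}\le\ga_1\frac{n}{\log n}$ for every $\mb{g}\notin W$ is what bounds the multinomial sum by roughly $|G|^n\bigl(\frac{3\log n}{\ga_1}\bigr)^{(|G|^2-|G|)\ga_1 n/\log n}$.
\end{itemize}

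You instead charge a factor $n^{-8c_1\ga/p^{2d}}$ to \emph{every} off-$W$ index. You then pay only $\binom{n}{m}|G|^m\le\bigl(\frac{e|G|\log n}{\ga_2}\bigr)^m$ for placing those indices, using nothing but the lower bound $m>\ga_2\frac{n}{\log n}$. Your bookkeeping is sound: each chosen pair contains exactly one off-$W$ index, so no factor is reused, and the orientation is handled by \ref{bilinear_cond2} for $\Omega_1$ and by \ref{bilinear_cond3} for $\Omega_2$.

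What your route buys is that no upper cap on the $i_{\mb{g}}$ is needed. Your bound therefore covers $\mc{C}^W_{3,a}$ and $\mc{C}^W_{3,b}$ simultaneously; in fact it covers every $m\ge 2e|G|\,n^{1-8c_1\ga/p^{2d}}$. This makes Proposition \ref{prop_case3a} and the constant $\ga_1$ superfluous. The paper's split into $\mc{C}^W_{3,a}$ and $\mc{C}^W_{3,b}$ is an artifact of extracting the saving from a single heavily repeated off-$W$ value. Both routes still need the refined Lemma \ref{lem: sin sum} argument for $\mc{C}^W_{3,c}$, as you note.

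A minor point: your $\kappa$ clashes with the paper's $\kappa\in\{1,2\}$ used in case $\mc{C}^W_{3,c}$.
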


\begin{proof}
Suppose that $i_\mb{g}\le\ga_1\frac{n}{\log n}$ for all $\mb{g}\notin W$ and $\sum_{\mb{g}\notin W}i_\mb{g}>\ga_2\frac{n}{\log n}$, so $i_\mb{g}>\frac{\ga_2}{\lt G\rt^2}\frac{n}{\log n}$ for some $\mb{g}\notin W$. As before, we may assume that there exists $\mb{g}'\in W$ such that  $B(\mb{g},\mb{g}')\ne 0$. By Lemma \ref{lem_ine1},
\begin{align*}
F_{\mc{C}_{3,b}^W}(n)&=\frac{1}{\lt G\rt^{n}} \sum_{\substack{\mb{g}_1,\cdots,\mb{g}_n\in G^2\\\mc{C}_{3,b}^W\text{ is satisfied}}}\prod_{k\in[n]}\lt1-\al_n+\al_n\ze^{t_{k,k,n} U(\mb{g}_k)}\rt\prod_{(k,l)\in \Omega}\lt1-\al_n+\al_n\ze^{t_{k,l,n} B(\mb{g}_k,\mb{g}_l)}\rt\\
&\le\frac{1}{\lt G\rt^{n}} \sum_{\substack{\mb{g}_1,\cdots,\mb{g}_n\in G^2\\\mc{C}_{3,b}^W\text{ is satisfied}}}\exp\left(-\frac{\log n}{n} \frac{8c_1}{p^{2d}} \cdot \ga n \cdot \frac{\ga_2}{\lt G\rt^2}\frac{n}{\log n} \right)\\
&=\frac{1}{\lt G\rt^n}\cdot\frac{1}{K^n}\cdot\#\left\{(\mb{g}_1,\cdots,\mb{g}_n)\in G^{2n}:\mc{C}_{3,b}^W\text{ is satisfied}\right\}\\&\le\frac{1}{K^n\lt G\rt^n}\cdot\#\left\{(\mb{g}_1,\cdots,\mb{g}_n)\in G^{2n}:i_\mb{g}\le\ga_1\frac{n}{\log n}\text{ for all }\mb{g}\notin W\right\}
\end{align*}
where $K:=\exp\left(\frac{8c_1\ga\ga_2}{\lt G\rt^2p^{2d}}\right)>1$. 
Write $G^2\setminus W=\{\mb{h}_1,\cdots,\mb{h}_{\lt G\rt^2-\lt G\rt}\}$. Then for all sufficiently large $n$,
\begin{align*}
&\frac{1}{K^n\lt G\rt^n}\cdot\#\left\{(\mb{g}_1,\cdots,\mb{g}_n)\in G^{2n}:i_\mb{g}\le\ga_1\frac{n}{\log n}\text{ for all }\mb{g}\notin W\right\}\\
= \, &\frac{1}{K^n\lt G\rt^n}\cdot\sum_{k_1,\cdots,k_{\lt G\rt^2-\lt G\rt}\le\ga_1\frac{n}{\log n}}\#\left\{(\mb{g}_1,\cdots,\mb{g}_n)\in G^{2n}:i_{\mb{h}_i}=k_i\text{ for all }i\right\}\\
= \, &\frac{1}{K^n\lt G\rt^n}\cdot\sum_{k_1,\cdots,k_{\lt G\rt^2-\lt G\rt}\le\ga_1\frac{n}{\log n}}\frac{n!}{\prod_ik_i!\cdot(n-\sum_ik_i)!}\lt G\rt^{n-\sum_i k_i}\\
\le \, & \frac{1}{K^n}\cdot\sum_{k_1,\cdots,k_{\lt G\rt^2-\lt G\rt}\le\ga_1\frac{n}{\log n}}\frac{n^{\sum_ik_i}}{\prod_ik_i!}\\
= \, & \frac{1}{K^n}\cdot\prod_{i=1}^{\lt G\rt^2-\lt G\rt} \left(\sum_{k_i\le\ga_1\frac{n}{\log n}}\frac{n^{k_i}}{k_i!}\right)\\
\le \, &\frac{1}{K^n} \cdot \left(\frac{2n^{\left\lfloor\ga_1\frac{n}{\log n}\right\rfloor}}{\left\lfloor\ga_1\frac{n}{\log n}\right\rfloor!}\right)^{\lt G\rt^2-\lt G\rt}\\
\le \, &\frac{2^{\lt G\rt^2-\lt G\rt}}{K^n}\cdot\left(\frac{en}{\left\lfloor\ga_1\frac{n}{\log n}\right\rfloor}\right)^{(\lt G\rt^2-\lt G\rt)\left\lfloor\ga_1\frac{n}{\log n}\right\rfloor}\\
\le \, & \frac{2^{\lt G\rt^2-\lt G\rt}}{K^n}\cdot\left(\frac{3 \log n}{\ga_1}\right)^{(\lt G\rt^2-\lt G\rt)\ga_1\frac{n}{\log n}}.
\end{align*}
The second inequality holds since $\ga_1\frac{n}{\log n} < \frac{n}{2}$ for all sufficiently large $n$, and the third inequality follows from the estimate $n!\ge (\frac{n}{e})^n$. Now the last term converges to $0$ as $n \to \infty$, which completes the proof.
\end{proof}

Let $\kappa \in \{ 1, 2 \}$ be a constant such that $\Omega=\Omega_{\kappa}$ in the notation of Section \ref{Sub23}. Recall that when $\kappa=2$, the pairing $B$ is either symmetric or alternating by condition \ref{bilinear_cond3} in Section \ref{Sub23}.

\begin{prop}\label{prop_case3c}
$\lim_{n\to\infty} F_{\mc{C}^W_{3,c}}=0$ for every isotropic $W \subseteq G^2$ with $\lt W\rt=\lt G\rt$.
\end{prop}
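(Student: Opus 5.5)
Fix the maximal isotropic $W$. We group the tuples satisfying $\mc{C}^W_{3,c}$ by the set $S \subseteq [n]$ of indices $k$ with $\mb{g}_k \notin W$, where $1 \le s := |S| \le \ga_2 \frac{n}{\log n}$. We also fix the values $\mb{g}_k$ for $k \in S$, and sum over the free choices $\mb{g}_l \in W$ for $l \notin S$. The key point is that the product factorizes over $l \notin S$. We discard all factors with both indices in $S$ or both outside $S$, and all diagonal factors, bounding them by $1$. For each $l \notin S$, the remaining factors are the terms involving pairs $(k,l)$ and (when $\kappa=1$) $(l,k)$ with $k \in S$. By Lemma \ref{lem_ine1}, each such factor is at most $\exp\bigl(-\frac{2c_1\log n}{n}\sin^2\frac{\pi tB(\cdot,\cdot)}{p^d}\bigr)$.

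So for fixed $S$ and fixed $(\mb{g}_k)_{k\in S}$, the sum over $(\mb{g}_l)_{l\notin S}\in W^{n-s}$ is at most
$$
\prod_{l \notin S} \sum_{\mb{g}' \in W} \exp\Bigl(-\frac{2c_1 \log n}{n}\sum_{k\in S}\sin^2\frac{\pi t_{k,l,n}B(\mb{g}_k,\mb{g}')}{p^d}\Bigr),
$$
where we choose the orientation $(k,l)$ or $(l,k)$ for which condition \ref{bilinear_cond2} gives a non-constant affine map. When $\kappa=2$, $B$ is symmetric or alternating, so either orientation works: the pair $\{k,l\}$ appears once in $\Omega_2$ with whichever order, and $\sin^2$ is insensitive to sign.

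Since $s\log n/n \le \ga_2$ is small, we use $e^{-x}\le 1-x+x^2/2$ (or convexity, $e^{-x}\le 1-(1-e^{-a})x/a$ on $[0,a]$) together with Lemma \ref{lem: sin sum}. The sum over $\mb{g}'\in W$ of each $\sin^2$ term is $|G|/2$, so each inner sum is at most
$$
|G|\Bigl(1-\frac{c_1 \log n}{n}s(1-O(\ga_2))\Bigr).
$$
Choosing $\ga_2$ small enough that $c_1(1-O(\ga_2))\ge c_2$, the product over $l\notin S$ is at most $|G|^{n-s}\exp\bigl(-c_2 s\log n\,(n-s)/n\bigr)\le |G|^{n-s} n^{-c_3 s}$ for large $n$, using $s=o(n)$.

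Summing over the choices: there are $\binom{n}{s}\le n^s$ choices of $S$ and at most $|G|^{2s}$ choices of the values off $W$. After dividing by $|G|^n$, we get
$$
F_{\mc{C}^W_{3,c}}(n)\le \sum_{s\ge1} n^s|G|^{2s}|G|^{-s}n^{-c_3 s}=\sum_{s\ge1}\bigl(|G|n^{1-c_3}\bigr)^s\to0,
$$
since $c_3>1$. This is exactly where $c>1$ is used.

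The main obstacle is the careful uniform version of the inequality $\sum_{\mb{g}'\in W}\exp(-\epsilon X_{\mb{g}'})\le |G|(1-\epsilon' \bar X)$ when the exponent involves a sum over up to $\ga_2 n/\log n$ indices $k\in S$ with differing $\mb{g}_k$ and $t_{k,l,n}$. Here the linearization $e^{-x}\le 1-(1-\delta)x$ for $x\le\delta'$ must be justified. This is possible because the exponent is at most $\frac{2c_1\log n}{n}s\le 2c_1\ga_2$, which is exactly why $\ga_2$ is chosen small. One should also check the $\kappa=1$ case: whichever orientation condition \ref{bilinear_cond2} provides for each pair $(\mb{g}_k,W)$, the corresponding entry of $\Omega_1$ exists, since $k\ne l$.
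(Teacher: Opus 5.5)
Your proposal is correct and follows essentially the same route as the paper. Both arguments group the tuples by the set of indices off $W$, keep only the cross factors, linearize via $e^{-x}\le 1-x+x^2/2$ using $|S|\log n/n\le\ga_2$, apply Lemma \ref{lem: sin sum} separately for each $l\notin S$, and finish with a sum in powers of $n^{1-c_3}$. The differences are cosmetic: you keep one good orientation per $k\in S$ instead of the paper's $1/\kappa$-power symmetrization, and you use the cruder count $\binom{n}{s}|G|^{2s}\le (n|G|^2)^s$ with a geometric series instead of the exact binomial expansion.
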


\begin{proof}
Suppose that $0<\sum_{\mb{g}\notin W}i_\mb{g}\le\ga_2\frac{n}{\log n}$. When $\kappa=2$, for each $(k,l) \in [n] \times [n] \setminus (\Omega_{\kappa} \cup \Delta)$, define
$$
t_{k,l,n} := \left\{\begin{matrix}
t_{l,k,n} & \text{if } B \text{ is symmetric},\\
-t_{l,k,n} & \text{if } B \text{ is alternating}.
\end{matrix}\right.
$$
Then
\begin{align*}
F_{\mc{C}_{3,c}^W}(n)&=\frac{1}{\lt G\rt^{n}} \sum_{\substack{\mb{g}_1,\cdots,\mb{g}_n\in G^2\\\mc{C}_{3,c}^W\text{ is satisfied}}}\prod_{k\in[n]}\lt1-\al_n+\al_n\ze^{t_{k,k,n} U(\mb{g}_k)}\rt\prod_{(k,l)\in \Omega_{\kappa}}\lt1-\al_n+\al_n\ze^{t_{k,l,n} B(\mb{g}_k,\mb{g}_l)}\rt\\
&=\frac{1}{\lt G\rt^{n}} \sum_{\substack{X\subseteq[n]\\0<\lt X\rt\le\ga_2\frac{n}{\log n}}}\sum_{\substack{\mb{g}_1,\cdots,\mb{g}_n\in G^2\\\{k\in[n]:\mb{g}_k\notin W\}=X\\i_\mb{g}\ge\ga n\text{ for all }\mb{g}\in W}}\prod_{k\in[n]}\lt1-\al_n +\al_n\ze^{t_{k,k,n} U(\mb{g}_k)} \rt \prod_{(k,l)\in \Omega_{\kappa}}\lt 1-\al_n+\al_n\ze^{t_{k,l,n} B(\mb{g}_k,\mb{g}_l)}\rt\\
&\le\frac{1}{\lt G\rt^{n}} \sum_{\substack{X\subseteq[n]\\0<\lt X\rt\le\ga_2\frac{n}{\log n}}} \sum_{\substack{\mb{g}_1,\cdots,\mb{g}_n\in G^2\\\{k\in[n]:\mb{g}_k\notin W\}=X}}\prod_{\substack{x\in X\\y\notin X}}\lt1-\al_n+\al_n\ze^{t_{x,y,n} B(\mb{g}_x,\mb{g}_y)}\rt^{1/\kappa}\lt1-\al_n+\al_n\ze^{t_{y,x,n} B(\mb{g}_y,\mb{g}_x)}\rt^{1/\kappa}.
\end{align*}
For each $y \in [n] \setminus X$, Lemma \ref{lem_ine1} implies that
\begin{align*}
&\prod_{x\in X}\lt1-\al_n+\al_n\ze^{t_{x,y,n} B(\mb{g}_x,\mb{g}_y)}\rt^{1/\kappa}\lt1-\al_n+\al_n\ze^{t_{y,x,n} B(\mb{g}_y,\mb{g}_x)}\rt^{1/\kappa}\\
\le \, & \prod_{x\in X}\left(1-\frac{2c_1\log n}{n}\sin^2\frac{\pi t_{x,y,n}B(\mb{g}_x,\mb{g}_y)}{p^d}\right)^{1/\kappa}\left(1-\frac{2c_1\log n}{n}\sin^2\frac{\pi t_{y,x,n}B(\mb{g}_y,\mb{g}_x)}{p^d}\right)^{1/\kappa}\\
\le \, & \exp\left(-\frac{2c_1}{\kappa}\frac{\log n}{n}S_y\right)\\
\le \, & 1-\frac{2c_1}{\kappa}\frac{\log n}{n}S_y+\frac{1}{2}\left(\frac{2c_1}{\kappa}\frac{\log n}{n}S_y\right)^2\\
\le \, & 1-\frac{2c_1}{\kappa}\frac{\log n}{n}\left(1-\frac{2c_1}{\kappa}\frac{\log n}{n} \lt X\rt\right)S_y\\
\le \, & 1-\frac{2c_1}{\kappa}\frac{\log n}{n}\left(1-\frac{2c_1}{\kappa}\frac{\log n}{n} \cdot \ga_2\frac{n}{\log n}\right)S_y\\
\le \, & 1-\frac{2c_2}{\kappa} \frac{\log n}{n}S_y,
\end{align*}
where 
$$
S_y:=\sum_{x\in X}\left(\sin^2\frac{\pi t_{x,y,n}B(\mb{g}_x,\mb{g}_y)}{p^d}+\sin^2\frac{\pi t_{y,x,n}B(\mb{g}_y,\mb{g}_x)}{p^d}\right)\le2\lt X\rt
$$
and $\ga_2>0$ is taken to satisfy $c_2 = c_1(1-2c_1 \ga_2)$. Consequently, for all sufficiently large $n$,
\begin{align*}
&\frac{1}{\lt G\rt^{n}} \sum_{\substack{X\subseteq[n]\\0<\lt X\rt\le\ga_2\frac{n}{\log n}}} \sum_{\substack{\mb{g}_1,\cdots,\mb{g}_n\in G^2\\ \{k\in[n]:\mb{g}_k\notin W\}=X}}\prod_{\substack{x\in X\\y\notin X}}\lt1-\al_n+\al_n\ze^{t_{x,y,n} B(\mb{g}_x,\mb{g}_y)}\rt^{1/\kappa}\lt1-\al_n+\al_n\ze^{t_{y,x,n} B(\mb{g}_y,\mb{g}_x)}\rt^{1/\kappa}\\
\le\,& \frac{1}{\lt G\rt^{n}} \sum_{\substack{X\subseteq[n]\\0<\lt X\rt\le\ga_2\frac{n}{\log n}}} \sum_{\substack{(\mb{g}_x)\in(G^2\setminus W)^X\\(\mb{g}_y)\in W^{G^2\setminus X}}}\prod_{\substack{y\notin X}}\left(1-\frac{2c_2}{\kappa}\frac{\log n}{n}S_y\right)\\
=\,& \frac{1}{\lt G\rt^{n}}\sum_{\substack{X\subseteq[n]\\0<\lt X\rt\le\ga_2\frac{n}{\log n}}} \sum_{\substack{(\mb{g}_x) \in(G^2\setminus W)^X}}\prod_{\substack{y\notin X}}\sum_{\mb{g}_y\in W}\left(1-\frac{2c_2}{\kappa}\frac{\log n}{n}S_y\right)\\
=\,& \frac{1}{\lt G\rt^{n}}\sum_{\substack{X\subseteq[n]\\0<\lt X\rt\le\ga_2\frac{n}{\log n}}}\sum_{\substack{(\mb{g}_x)\in(G^2\setminus W)^X}}\prod_{\substack{y\notin X}}\left(\lt G\rt-\frac{2c_2}{\kappa}\frac{\log n}{n}\sum_{\mb{g}_y\in W}S_y\right)\\
\le\,& \frac{1}{\lt G\rt^{n}}\sum_{\substack{X\subseteq[n]\\0<\lt X\rt\le\ga_2\frac{n}{\log n}}}\sum_{\substack{(\mb{g}_x)\in(G^2\setminus W)^X}}\prod_{\substack{y\notin X}}\left(\lt G\rt-c_2\frac{\log n}{n}\cdot\lt X\rt\lt G\rt\right)\\
=\,& \frac{1}{\lt G\rt^n}\sum_{\substack{X\subseteq[n]\\0<\lt X\rt\le\ga_2\frac{n}{\log n}}}(\lt G\rt^2-\lt G\rt)^{\lt X\rt}\cdot\lt G\rt^{n-\lt X\rt}\left(1-c_2\frac{\log n}{n}\lt X\rt\right)^{n-\lt X\rt}\\
=\,& \sum_{\substack{X\subseteq[n]\\0<\lt X\rt\le\ga_2\frac{n}{\log n}}}(\lt G\rt-1)^{\lt X\rt}\cdot\left(1-c_2\frac{\log n}{n}\lt X\rt\right)^{n-\lt X\rt}\\
\le\,& \sum_{\substack{X\subseteq[n]\\0<\lt X\rt\le\ga_2\frac{n}{\log n}}}(\lt G\rt-1)^{\lt X\rt}\cdot\exp\left(-c_2\frac{\log n}{n}\cdot\lt X\rt(n-\lt X\rt)\right)\\
\le\,& \sum_{\substack{X\subseteq[n]\\0<\lt X\rt\le\ga_2\frac{n}{\log n}}}(\lt G\rt-1)^{\lt X\rt}\cdot\exp\left(-c_3 \lt X\rt\cdot\log n\right) \\
=\,& \sum_{\substack{X\subseteq[n]\\0<\lt X\rt\le\ga_2\frac{n}{\log n}}}\left(\frac{\lt G\rt-1}{n^{c_3}}\right)^{\lt X\rt}\\
\le\,& \left( 1 + \frac{\lt G\rt-1}{n^{c_3}} \right)^n - 1.
\end{align*}
Here, the second inequality (which is an equality when $\kappa =2$) follows from Lemma \ref{lem: sin sum}, and the fourth inequality holds since $c_2 \left(1 - \frac{\ga_2}{\log n} \right) > c_3$ for all sufficiently large $n$. Now the last term converges to $0$ as $n\to\infty$.
\end{proof}

\begin{prop}\label{prop_case3d}
$\lim_{n\to\infty}F_{\mc{C}^W_{3,d}}=0$ for every isotropic $W \subseteq G^2$ with $\lt W\rt=\lt G\rt$.
\end{prop}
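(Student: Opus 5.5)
The plan is to show that in case $\mc{C}^W_{3,d}$ the only source of decay is the diagonal factors involving $U$, and that these already force $F_{\mc{C}^W_{3,d}}(n)\to 0$ by a crude counting bound; the threshold $c>1$ should play no role here.

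First I record the consequences of $\mc{C}^W_{3,d}$. Every $\mb{g}_k$ lies in $W$, and $W$ is isotropic, so $B(\mb{g}_k,\mb{g}_l)=0$ for all $k,l\in[n]$. Every factor indexed by $\Omega$ then equals $|1-\al_n+\al_n|=1$. Since the tuple is assumed not to lie in $G_{tr}$, there must be some $k$ with $U(\mb{g}_k)\neq 0$.

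This settles cases (3) and (4), where $U=0$. There every tuple satisfying $\mc{C}^W_{3,d}$ lies in $G_{tr}$, so the defining sum is empty and $F_{\mc{C}^W_{3,d}}(n)=0$. In general, the tuple $(\mb{g}_1,\ldots,\mb{g}_n)$ is not in $G_{tr}$, so some $\mb{g}_k$ satisfies $U(\mb{g}_k)\ne0$. Call this element $\mb{g}^*:=\mb{g}_k\in W$. Because $\mb{g}^*\in W$, the definition of $W$ gives $i_{\mb{g}^*}\ge\ga n$. Hence at least $\ga n$ indices $k$ have $\mb{g}_k=\mb{g}^*$, and for each of them $U(\mb{g}_k)\ne0$.

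For each such $k$, Lemma \ref{lem_ine1} bounds the diagonal factor by
$$\lt 1-\al_n+\al_n\ze^{t_{k,k,n}U(\mb{g}_k)}\rt\le 1-\frac{8c_1}{p^{2d}}\frac{\log n}{n}.$$
All other factors are bounded by $1$. So every summand is at most
$$\left(1-\frac{8c_1}{p^{2d}}\frac{\log n}{n}\right)^{\ga n}\le \exp\!\left(-\frac{8c_1\ga}{p^{2d}}\log n\right)=n^{-8c_1\ga/p^{2d}}.$$

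Next I count the summands. The number of tuples with all $\mb{g}_k\in W$ is at most $|W|^n=|G|^n$, which exactly cancels the normalization $\frac{1}{|G|^n}$. Therefore
$$F_{\mc{C}^W_{3,d}}(n)\le n^{-8c_1\ga/p^{2d}}\longrightarrow 0.$$

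I do not expect a real obstacle here. The one point to check is that $\mc{C}^W_{3,d}$ together with the condition $(\mb{g}_1,\ldots,\mb{g}_n)\notin G_{tr}$ really forces a frequently occurring element of $W$ with nonzero $U$. This holds because, in case $\mc{C}^W_{3,d}$, every element appearing in the tuple belongs to $W$ and so appears at least $\ga n$ times. If one preferred not to rely on this, a fallback is to split $W$ into $W'=\{U=0\}$ and $W''=W\setminus W'$ and use a binomial sum. That sum evaluates to $\bigl(1-\tfrac{|W''|}{|G|}\tfrac{8c_1\log n}{p^{2d}n}\bigr)^n-\bigl(\tfrac{|W'|}{|G|}\bigr)^n$, which also tends to $0$ because $|W'|<|G|$.
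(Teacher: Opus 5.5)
Your proposal is correct and matches the paper's proof. You use that an element of $W$ with $U\neq 0$ occurs at least $\ga n$ times, bound those diagonal factors by Lemma \ref{lem_ine1}, and cancel the count $|W|^n=|G|^n$ against the normalization to get $n^{-8c_1\ga/p^{2d}}$; your extra remarks (the case is vacuous when $U=0$, and the binomial fallback) are valid but not needed.
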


\begin{proof}
Suppose that $i_\mb{g}=0$ for all $\mb{g}\notin W$. Then $U(\mb{g}_k)\ne0$ for some $\mb{g}_k\in W$, since $(\mb{g}_1,\cdots,\mb{g}_n)\notin G_{tr}$. Hence, applying Lemma \ref{lem_ine1}, we have
\begin{align*}
F_{\mc{C}_{3,d}^W}(n) &=\frac{1}{\lt G\rt^{n}} \sum_{\substack{\mb{g}_1,\cdots,\mb{g}_n\in G^2\\\mc{C}_{3,d}^W\text{ is satisfied}}}\prod_{k\in[n]}\lt1-\al_n+\al_n\ze^{t_{k,k,n} U(\mb{g}_k)}\rt\prod_{(k,l)\in \Omega}\lt1-\al_n+\al_n\ze^{t_{k,l,n} B(\mb{g}_k,\mb{g}_l)}\rt\\
&\le\frac{1}{\lt G\rt^n}\sum_{(\mb{g}_1,\cdots,\mb{g}_n)\in W^n\setminus G_{tr}}\left(1-\frac{\log n}{n}\frac{8c_1}{p^{2d}}\right)^{\ga n}\\
& \le\exp\left(-\frac{\log n}{n}\frac{8c_1}{p^{2d}}\cdot\ga n\right)\\
&=n^{-\frac{8c_1\ga}{p^{2d}}},
\end{align*}
where the last term converges to $0$ as $n\to\infty$.
\end{proof}

\section{Sharpness of the \texorpdfstring{$\al_n$}{aln} threshold} \label{Sec5}

In this section, we prove the sharpness of the $\frac{\log n}{n}$ threshold in our main results.

\begin{prop} \label{prop_optimal}
Suppose that $A(n)$ is a $\frac{\log n}{n}$-balanced random matrix in $\mf{M}_n$ over $\fp$ (or $\zp$) whose entries are i.i.d. copies of a random element $z$ satisfying $\bP(z = 0) = 1 - \frac{\log n}{n}$ and $\bP(z = 1) = \frac{\log n}{n}$. (If $\mf{M}_n=\mr{Sym}_n$ (resp. $\mf{M}_n=\mr{Alt}_n$), then the upper triangular (resp. strictly upper triangular) entries are i.i.d. copies of $z$.) Then for every $k \ge 1$,
\begin{equation} \label{eq_optimal}
\liminf_{n \to \infty} \bP(A(n) \text{ has at least } k \text{ zero columns}) \ge \frac{1}{(k+1)!}.
\end{equation}
\end{prop}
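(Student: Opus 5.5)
Let $Z_n$ be the number of zero columns of $A(n)$ and $q_n := \frac{\log n}{n}$. The plan is to show that $Z_n$ converges in distribution to a Poisson random variable with mean $1$. Then
\[
\lim_{n\to\infty} \bP(Z_n \ge k) = \sum_{j \ge k} \frac{e^{-1}}{j!} \ge \frac{e^{-1}}{k!}.
\]
Since $e^{-1}/k! \ge 1/(k+1)!$ whenever $k+1 \ge e$, this gives \eqref{eq_optimal} for every $k \ge 2$. The case $k=1$ requires $1-e^{-1} \ge 1/2$, which also holds. The bound is therefore deliberately loose, and it suffices to get the Poisson limit, or even just good enough bounds on the factorial moments.

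I will use the method of factorial moments. For a fixed $j$, the quantity $\bE\binom{Z_n}{j}$ is the sum over $j$-subsets $S \subseteq [n]$ of $\bP(\text{all columns indexed by } S \text{ vanish})$. The entries that must vanish are as follows.

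In the non-symmetric case there are exactly $jn$ independent entries, so the probability is $(1-q_n)^{jn}$.

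In the symmetric case, the entries in the columns of $S$ correspond to upper-triangular positions. There are $jn - \binom{j}{2}$ distinct independent positions, counting the diagonal and the entries shared within $S\times S$.

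In the alternating case, the diagonal entries are zero automatically, so there are $j(n-1) - \binom{j}{2}$ distinct positions.

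In every case the number of required zeros is $jn - O(j^2)$. Hence
\[
\bE\binom{Z_n}{j} = \binom{n}{j}(1-q_n)^{jn-O(1)}.
\]
Since $(1-q_n)^{n} = \exp\bigl(-\log n + O(\log^2 n / n)\bigr) = n^{-1}(1+o(1))$, this expression equals $\frac{n^j}{j!}\, n^{-j}(1+o(1)) \to \frac{1}{j!}$. These are exactly the factorial moments of $\mathrm{Poisson}(1)$. The Poisson distribution is determined by its moments, so $Z_n \to \mathrm{Poisson}(1)$ in distribution. Over $\zp$ the same computation applies, since the entries take values in $\{0,1\}$.

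I expect no real obstacle; the argument is routine. The only point needing care is the count of distinct independent entries in the symmetric and alternating cases, to confirm that the overlap is $O(j^2)$ and contributes a factor $(1-q_n)^{-O(1)} \to 1$.

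If I want to avoid the convergence-of-moments theorem, I can instead use the Bonferroni inequality
\[
\bP(Z_n \ge k) \ge \sum_{j=k}^{k+2m+1} (-1)^{j-k}\binom{j-1}{k-1}\,\bE\binom{Z_n}{j}
\]
for a suitable truncation. Letting $n\to\infty$ and then $m\to\infty$ recovers the Poisson tail.
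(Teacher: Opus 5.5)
Your proof is correct, and its core computation is the same as the paper's. Both compute $\bE\binom{Z_n}{j}=\binom{n}{j}(1-q_n)^{jn-c_j}$, where $c_j=0,\binom{j}{2},\binom{j+1}{2}$ in the non-symmetric, symmetric and alternating cases, and both use $n(1-q_n)^n\to 1$.

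The difference is in how you turn factorial moments into a tail bound. The paper uses only $j=k$ and $j=k+1$, together with the elementary inequality $\binom{N}{k}-k\binom{N}{k+1}\le \mathbf{1}_{N\ge k}$. This is exactly your Bonferroni bound with $m=0$, and it gives $\liminf_n \bP(Z_n\ge k)\ge \frac{1}{k!}-\frac{k}{(k+1)!}=\frac{1}{(k+1)!}$ directly. That explains the form of the constant in the statement. It also avoids the method-of-moments theorem, the limit $m\to\infty$, and the separate numerical check at $k=1$. In particular, you never need to let $m\to\infty$ in your fallback: $m=0$ already suffices.

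Your main route uses all factorial moments and the Poisson moment-convergence theorem. This costs a standard but heavier limit theorem. In return it gives more: the exact limit $\lim_{n}\bP(Z_n\ge k)=\bP(\mathrm{Poisson}(1)\ge k)$, which is stronger than the stated bound. It also makes explicit that the number of zero columns is asymptotically $\mathrm{Poisson}(1)$ at the critical scale.
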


\begin{proof}
Let $Y_i$ denote the event that the $i$-th column of $A(n)$ is zero, and set $I_i = \mathbf{1}_{Y_i}$ and $N=\sum_{i=1}^{n} I_i$. For all nonnegative integers $m$ and $k$, we have
$$
\binom{m}{k} - k \binom{m}{k+1} \le \mathbf{1}_{m \ge k},
$$
where $\mathbf{1}_{m \ge k}$ denotes the indicator function of the event $m \ge k$, i.e. $\mathbf{1}_{m \ge k}=1$ if $m \ge k$ and $\mathbf{1}_{m \ge k}=0$ otherwise. 
Therefore
$$
\bP(A(n) \text{ has at least } k \text{ zero columns})
= \bE(\mathbf{1}_{N \ge k})
\ge \bE \binom{N}{k} - k \bE \binom{N}{k+1}.
$$
By the definition of $N$, 
$$
\bE \binom{N}{r} = \sum_{1 \le i_1 < \cdots < i_r \le n} \bE(I_{i_1} \cdots I_{i_r})
= \sum_{1 \le i_1 < \cdots < i_r \le n} \bP(Y_{i_1} \cap \cdots \cap Y_{i_r})
$$
for every $r \ge 1$. For $u_n = \frac{\log n}{n}$, 
$$
\bP(Y_{i_1} \cap \cdots \cap Y_{i_r}) = \left\{\begin{matrix}
(1-u_n)^{rn} & \text{if } \mf{M}_n = \M_n, \\
(1-u_n)^{rn - \frac{r(r-1)}{2}} & \text{if } \mf{M}_n = \mr{Sym}_n, \\
(1-u_n)^{rn- \frac{r(r+1)}{2}} & \text{if } \mf{M}_n = \mr{Alt}_n
\end{matrix}\right.
$$
so
\begin{align*}
\bE \binom{N}{k} - k \bE \binom{N}{k+1}
& \ge \binom{n}{k} (1-u_n)^{kn} - k \binom{n}{k+1} (1-u_n)^{(k+1)n - \frac{(k+1)(k+2)}{2}} \\
& = \binom{n}{k} (1-u_n)^{kn} \left( 1 - k \frac{n-k}{k+1} (1-u_n)^{n-\frac{(k+1)(k+2)}{2}} \right).
\end{align*}
Since
$$
n(1-u_n)^n 
= n^{1+\frac{\log(1-u_n)}{u_n}}
= n^{-\frac{u_n}{2}+O(u_n^2)}
= e^{-\frac{(\log n)^2}{2n} + O\left( \frac{(\log n)^3}{n^2} \right)},
$$
we have $n(1-u_n)^n \to 1$ as $n \to \infty$. Hence, for every $k\ge 1$,
\begin{equation*}
\lim_{n \to \infty} \binom{n}{k} (1-u_n)^{kn} \left( 1 - k \frac{n-k}{k+1} (1-u_n)^{n-\frac{(k+1)(k+2)}{2}} \right) = \frac{1}{k!} \left( 1 - \frac{k}{k+1} \right) = \frac{1}{(k+1)!}. \qedhere
\end{equation*}
\end{proof}

Now let $A(n)$ be a $\frac{\log n}{n}$-balanced random matrix in $\mf{M}_n$ over $\fp$ defined as in Proposition \ref{prop_optimal}, and let $B(n)$ be a uniform random matrix in $\mf{M}_n$ over $\fp$. 
Then for all sufficiently large $n$,
$$
\bP(\rank(A(n)) \le n-k)
\ge \bP(A(n) \text{ has at least } k \text{ zero columns}) \\
\ge \frac{1}{2(k+1)!}.
$$
However, if $\mf{M}_n$ is one of $\M_n$, $\mr{Sym}_n$, or $\mr{Alt}_n$, then Theorem \ref{thm_nonsym_Fp} and Corollary \ref{main thm_fp} imply
$$
\bP(\rank(B(n)) \le n-k) = O \left( p^{-\frac{k(k-1)}{2}} \right).
$$
Since $\frac{1}{2(k+1)!}$ dominates $p^{-\frac{k(k-1)}{2}}$ for all sufficiently large $k$, the lower bound for $A(n)$ is asymptotically much larger than the corresponding probability for the uniform matrix model $B(n)$. 
Hence the distribution of $\cok(A(n))$ cannot converge to the limiting distribution of $\cok(B(n))$. This shows that universality fails when $\al_n = \frac{\log n}{n}$. In particular, Theorem \ref{thm_nonsym_Fp} and Corollary \ref{main thm_fp} cannot be extended to the critical case $c=1$. Moreover, if Theorem \ref{main thm} were valid for $c=1$, then reducing modulo $p$ would imply Theorem \ref{thm_nonsym_Fp} and Corollary \ref{main thm_fp} for $c=1$, which is impossible. Hence Theorem \ref{main thm} also fails in the critical case $c=1$.

\section{Extending to a finite set of primes} \label{Sec6}

In this section, we briefly sketch how to extend our main results to the $\mc{P}$-primary part of the cokernel of a random integral matrix, where $\mc{P}$ is a finite set of primes. 
For a finite set of primes $\mc{P}$, a \emph{$\mc{P}$-group} is a finite abelian group whose order is a product of powers of primes in $\mc{P}$. Let $S_{\mc{P}}$ denote the set of all finite abelian groups of the form $G \times G$ for some $\mc{P}$-group $G$. For $H \in S_{\mc{P}}$, $\mr{Sp}(H)$ is defined as in the introduction.
For a finitely generated abelian group $G$, write $G_{\mc{P}} := \bigoplus_{p \in \mc{P}} (G \otimes \zp)$.
A random matrix in $\mf{M}_n(\bZ)$ is $\al$-\emph{balanced} if its reduction modulo $p$ is $\al$-balanced as a random matrix in $\mf{M}_n(\fp)$ for every prime $p$. 

\begin{thm} \label{main thm finite prime case}
Let $c>1$ be a constant and $\al_n = \frac{c \log n}{n}$. Suppose that $A(n)$ is an $\al_n$-balanced random matrix in $\mf{M}_n(\bZ)$ for each $n \ge 1$, where $\mf{M}_n$ is one of $\M_n$, $\mr{Sym}_n$ or $\mr{Alt}_n$. Let $H$ be a finite abelian group and let $\mc{P}$ be a finite set of primes containing all prime divisors of $|H|$.
\begin{enumerate}[label=(\alph*)]
    \item (Non-symmetric case) If $\mf{M}_n = \M_n$, then
    \begin{equation*}
    \lim_{n \to \infty} \bP\left(\cok(A(n))_{\mc{P}} \cong H\right) 
    = \frac{1}{|\Aut(H)|} \prod_{p \in \mc{P}}\prod_{i=1}^{\infty} (1-p^{-i}).
    \end{equation*}
    
    \item (Symmetric case) If $\mf{M}_n = \mr{Sym}_n$, then
    \begin{equation*}
    \lim_{n \to \infty} \bP\left(\cok(A(n))_{\mc{P}} \cong H\right) 
    = \frac{\# \{ \text{symmetric, bilinear, perfect } \phi : H \times H \to \bC^* \}}{|H||\Aut(H)|} \prod_{p \in \mc{P}}\prod_{i=1}^{\infty} (1-p^{1-2i}).
    \end{equation*}

    \item (Alternating case) If $\mf{M}_n = \mr{Alt}_n$, then
    \begin{equation*}
    \lim_{n \to \infty} \bP\left(\cok(A(2n))_{\mc{P}} \cong H\right) 
    = \left\{\begin{matrix}
    \frac{|H|}{|\mr{Sp}(H)|} \prod_{p\in \mc{P}}\prod_{i=1}^{\infty} (1-p^{1-2i}) & \text{if } H \in S_{\mc{P}},\\
    0 & \text{otherwise}
    \end{matrix}\right.
    \end{equation*}
    and 
    \begin{equation*}
    \lim_{n \to \infty} \bP\left(\cok(A(2n+1))_{\mc{P}} \cong \bigoplus_{p \in \mc{P}} \zp \times H \right) 
    = \left\{\begin{matrix}
    \frac{1}{|\mr{Sp}(H)|} \prod_{p \in \mc{P}}\prod_{i=2}^{\infty} (1-p^{1-2i}) & \text{if } H \in S_{\mc{P}},\\
    0 & \text{otherwise}.
    \end{matrix}\right.
    \end{equation*}
\end{enumerate}
\end{thm}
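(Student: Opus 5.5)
We will prove Theorem \ref{main thm finite prime case} by the moment method over the finite product ring, following the same path as Theorem \ref{main thm}. Fix a $\mc{P}$-group $G$ and choose $d$ with $p^{d}G_p=0$ for every Sylow subgroup $G_p$, $p \in \mc{P}$. Put $m=\prod_{p\in\mc{P}}p^{d}$ and $R=\bZ/m\bZ\cong\prod_{p\in\mc{P}}\bZ/p^d\bZ$. Since $\cok(A(n))_{\mc{P}}\otimes R\cong\cok(\ol{A}(n))$, where $\ol{A}(n)$ is the reduction modulo $m$, the Hom-moment $\bE(\#\Hom(\cok(A(n))_{\mc{P}},G))$ is given by the same Fourier expansion as in Section \ref{Sub22}. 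The only changes are that $\ze$ becomes a primitive $m$-th root of unity and the pairing $\cdot$ becomes the orthogonal sum of the $p$-primary pairings $G_p\times G_p\to\bZ/p^d\bZ\hookrightarrow R$. This puts the moment in the form $f((\ol{A}(n)_{k,l}))$ of Section \ref{Sub23}, with the same $\Omega,U,B$ for each of the three models.

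Next we need the main term. We have $G=\prod_p G_p$, and $G^2$ together with $U$ and $B$ decomposes as an orthogonal product over $p$. Consequently $G_{tr}$, the isotropic subgroups, and the maximal isotropic subsets all factor over primes. Conditions \ref{bilinear_cond1}–\ref{bilinear_cond3} then follow from the $p$-primary versions: maximal isotropic subsets are products of cosets, and for $\mb{g}\notin W$ some $p$-component lies outside its factor, which gives a non-constant affine map. Proposition \ref{prop: main terms for random matrix} and Proposition \ref{prop3d} apply prime by prime. The limiting Hom-moment is therefore $\sum_{H\le G}1$, $\sum_{H\le G}|\wedge^2H|$, or $\sum_{H\le G}|\mr{Sym}^2H|$ respectively, so the Sur-moments are $1$, $|\wedge^2G|$, or $|\mr{Sym}^2G|$.

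The main obstacle is the error term, namely extending Theorem \ref{thm: general} to $R=\bZ/m\bZ$. Three inputs must be adapted.

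\begin{enumerate}
\item Lemma \ref{lem: extreme point}. An entry is now $\al_n$-balanced modulo every $p\in\mc{P}$, so the convex set $C$ is cut out by the residue-class constraints for all $p$ simultaneously. Its extreme points may have more than two atoms. Instead of identifying them exactly, I would bound a single factor directly. For nonzero $r\in R$, the character $x\mapsto\ze^{rx}$ factors through $\bZ/p^d\bZ$ for some $p$ dividing the order of $r$, with a nontrivial $p$-part. Balancedness modulo that $p$ then gives
$$
|\bE(\ze^{rx})|\le 1-2c_1\al_n'\sin^2(\pi r'/m)
$$
for a suitable representative $r'$. This is the analogue of Lemma \ref{lem_ine1}, and I would derive it by the same extreme-point argument applied to the induced distribution on $\bZ/p^d\bZ$.
\item The uniform lower bound $8c_1/m^2$.
\item The averaged identity of Lemma \ref{lem: sin sum}, which holds verbatim for $\bZ/m\bZ$.
\end{enumerate}

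The delicate point is in case $\mc{C}_{3,c}$, where the sharp constant matters. Here we need $\sum_{\mb{g}'\in W}(\text{per-entry decay})\ge c_2|G|\log n/n$ with the same constant $c_2$. I would obtain this by replacing the maximizing $t_{k,l,n}$ with a per-entry choice of $(p,t)$ and applying Lemma \ref{lem: sin sum} to the $p$-component of $B$. Since $\mb{g}\notin W$ has a bad $p$-component, the map is affine and non-constant there. With these ingredients, Propositions \ref{prop_case1}–\ref{prop_case3d} go through word for word.

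Finally, we pass from moments to the distribution. The moments $|\wedge^2G|$ and $|\mr{Sym}^2G|$ have product form over $p$, so they are the moments of the product of the $p$-primary limiting distributions. Uniqueness for $\mc{P}$-groups (\cite[Theorem 3.1]{Woo19}, \cite[Theorem 4.1]{NW25}, both stated for finite sets of primes) then identifies the limits. These limits are exactly the products of the local formulas in Theorem \ref{main thm}, and they match the displayed expressions because $|\Aut(H)|$, the pairing counts and $|\mr{Sp}(H)|$ all factor over the primes in $\mc{P}$.
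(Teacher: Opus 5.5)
\textbf{There is a genuine gap in the error term, at exactly the point you flag: case $\mc{C}_{3,c}$ over $\bZ/m\bZ$.}

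Your per-factor bound rests on the claim that $x\mapsto\ze^{rx}$ factors through $\bZ/p^d\bZ$ for some $p$. This is false as soon as the order of $r$ is divisible by two primes of $\mc{P}$; already $m=6$, $r=1$ is a counterexample. Such values $r=B(\mb{g}_x,\mb{g}_y)$ with $\mb{g}_y\in W$ do arise, for instance when $\mb{g}_x$ lies outside $W$ at two primes, since then $B(\mb{g}_x,W)$ can be all of $\bZ/6\bZ$.

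Because the residues of one entry modulo different prime powers are dependent, $\lvert\bE(\ze^{rx})\rvert$ is not governed by the $p$-primary component of $r$. Take $\bP(x=0)=1-\al$, $\bP(x=1)=\al$ in $\bZ/6\bZ$ and $r=1$. The true decay $1-\lvert\bE(\ze^{rx})\rvert$ is about $\al/2$, whereas replacing $r$ by its $2$-primary component $3$ predicts $2\al$. So neither your single-factor inequality nor "Lemma \ref{lem: sin sum} applied to the $p$-component of $B$" is justified.

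For $\mc{C}_1,\mc{C}_{3,a},\mc{C}_{3,b},\mc{C}_{3,d}$ this is harmless, since any bound $1-\delta\frac{\log n}{n}$ with some $\delta>0$ suffices. In $\mc{C}_{3,c}$ the constant must be sharp.

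A single per-entry choice of $(p,t)$ is also too lossy. Entry distributions balanced modulo every $p\in\mc{P}$ can have several small atoms that share the $p$-unit mass. In $\bZ/30\bZ$, take $\bP(0)=1-\frac{3\al}{2}$ and $\bP(2)=\bP(3)=\bP(5)=\frac{\al}{2}$. This is an extreme point of the set of distributions balanced modulo $2$, $3$ and $5$. Its mass outside $5\bZ/30\bZ$ is split between $2$ and $3$, which stay distinct modulo $5$. Keeping one atom halves the decay in Proposition \ref{prop_case3c}, so your argument would only work for $c$ above a larger threshold.

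\textbf{What is missing is an estimate on the whole ring $\bZ/m\bZ$ that retains the total $p$-unit mass of every entry.} The paper does this in four steps.
\begin{enumerate}
\item Lemma \ref{lem: extreme point2}: after translation, every extreme point is a dominant atom at $0$ plus atoms $t_l$ that generate $R$, with masses $s_l\in[\al/(m-1)!,\al]$.
\item Lemma \ref{lemma_ine2}, keeping all atoms: $\lvert 1-\sum_l s_l(1-\ze^{t_l r})\rvert\le 1-2\bigl(1-\frac{kc\log n}{n}\bigr)\sum_l s_l\sin^2\frac{\pi t_l r}{m}$.
\item In $\mc{C}_{3,c}$, $S_y$ is reweighted by $s_l/\al_n$. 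If $B(\mb{g}_x,W)=a_0+m_0R$ and $p\mid m/m_0$, then every atom with $p\nmid t_l$ makes $t_lB(\mb{g}_x,\cdot)$ non-constant affine on $W$. Hence Lemma \ref{lem: sin sum} applies over $\bZ/m\bZ$.
\item Combined with $\sum_{l:\,p\nmid t_l}s_l=1-\bP(z\equiv 0\bmod p)\ge\al_n$, this gives $\sum_{\mb{g}_y\in W}S_y\ge\kappa\lvert X\rvert\lvert G\rvert/2$, the same constant as for $p$-groups.
\end{enumerate}
An alternative route: use $1-\lvert\bE(\ze^{rx})\rvert\ge\sum_{a,b}\bP(x=a)\bP(x=b)\sin^2\frac{\pi r(a-b)}{m}$, sum over $W$, and keep only pairs with $a\not\equiv b\bmod p$. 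This also gives the sharp constant, but you must additionally control the linearization step.

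Your prime-by-prime verification of conditions \ref{bilinear_cond1}--\ref{bilinear_cond3} and your main-term computation are fine. For the odd alternating case, however, uniqueness of moments alone does not suffice. You also need the rank-$2n$ input used in the proof of \cite[Theorem 1.13]{NW25}.
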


Our proof of Theorem \ref{main thm finite prime case} is based on the computation of the moments. 

\begin{thm} \label{moment thm finite prime case}
Let $A(n)$ be as in Theorem \ref{main thm finite prime case}. Then for any finite abelian group $G$, 
$$
\lim_{n \to \infty} \bE(\# \Sur(\cok(A(n)), G)) = \left\{\begin{matrix}
1 & \text{if} \quad \mf{M}_n = \M_n, \\
\lt \wedge^2 G \rt & \text{if} \quad \mf{M}_n = \mr{Sym}_n, \\
\lt \mr{Sym}^2 G \rt & \text{if} \quad \mf{M}_n = \mr{Alt}_n.
\end{matrix}\right.
$$
\end{thm}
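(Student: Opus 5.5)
We reduce Theorem \ref{moment thm finite prime case} to the single-prime machinery of Sections \ref{Sec2}--\ref{Sec4}, applied to an arbitrary finite abelian group $G$ in place of a $p$-group. Since $\#\Sur(\cok(A(n)),G)$ depends only on the reduction of $A(n)$ modulo $N:=\exp(G)$, we work over $R:=\bZ/N\bZ$. We replace $\ze$ by $\exp(2\pi i/N)$ and use a perfect $R$-valued pairing $\cdot$ on $G$, obtained by decomposing $G$ into cyclic factors exactly as in Section \ref{Sub21}. The Fourier expansion of Section \ref{Sub22} goes through verbatim and expresses $\bE(\#\Hom(\cok(A(n)),G))$ as $f((\ol{A}(n)_{kl}))$ for the same $\Omega$, $U_i$, $B_i$.

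The main term is handled by CRT. We have $G=\bigoplus_{p\mid N}G_p$, the pairing splits orthogonally, and $G_{tr}$ factors as a product over primes. Hence $|G_{tr}|/|G|^n$ factors, and by Proposition \ref{prop: main terms for random matrix} it converges to $\prod_p\sum_{H_p\le G_p}(\cdot)$, which equals $\sum_{H\le G}1$, $\sum_{H\le G}|\wedge^2H|$, or $\sum_{H\le G}|\mr{Sym}^2H|$ respectively. Equivalently, the bijection of Proposition \ref{prop3d} works for any finite abelian $G$ with a perfect $\bQ/\bZ$-valued pairing, and its proof never used that $G$ is a $p$-group. Möbius inversion on the subgroup lattice then gives the stated Sur-moments: $1$, $|\wedge^2G|$, $|\mr{Sym}^2G|$.

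The error term is handled by re-running Section \ref{Sec4}. The one place requiring care is Lemma \ref{lem: extreme point}, because $\al$-balancedness is now imposed modulo every prime $p\mid N$. So the convex set becomes
$$
C=\Bigl\{s\in[0,1]^{N}:\textstyle\sum s_i=1,\ \sum_{i\equiv a \ (p)}s_i\le1-\al\ \text{for all } p\mid N,\ a\Bigr\}.
$$
Its extreme points need no longer be two-point measures. However, Section \ref{Sec4} only needs the per-factor bound
$$
|\bE\ze^{rx}|\le 1-\frac{2c_1\log n}{n}\sin^2\frac{\pi r t}{N}
$$
for some unit $t$, together with the averaging identity of Lemma \ref{lem: sin sum}. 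I would try to prove directly that for $x$ balanced modulo each $p\mid N$ and $r\ne0$ in $R$,
$$
|\bE\ze^{rx}|\le 1-c'\al\min_{t\in R^\times}\sin^2(\pi rt/N).
$$
Here the constant $c'$ must be arbitrarily close to $2$; the correct form is the extreme-point reduction restricted to primes $p$ dividing the order of $r$. Since $\ze^{rx}$ only depends on $x$ modulo $N/\gcd(r,N)$, which is coprime to the remaining primes, one projects to a quotient ring, and there the argument of Lemma \ref{lem: extreme point} uses balancedness modulo a prime $p$ dividing the order of $rx$. With that lemma in hand, the case split $\mc{C}_1,\mc{C}_2,\mc{C}_{3,a}$–$\mc{C}_{3,d}$ and every estimate in Propositions \ref{prop_case1}--\ref{prop_case3d} transfer unchanged, with $p^d$ replaced by $N$. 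This uses that conditions \ref{bilinear_cond1}--\ref{bilinear_cond3} hold for general $G$, which follows primewise by CRT.

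The main obstacle is precisely this modified extreme-point lemma: the multi-prime balancing constraints destroy the clean two-atom description. I would first attempt to prove the needed inequality directly without Bauer's principle. If the $c'\to2$ sharpness fails, an alternative is to keep the structure but apply Bauer's principle jointly to the product measure over the primes dividing $N$. The sharp constant matters only in Proposition \ref{prop_case3c}. Finally, as in Section \ref{Sec1}, the moment limits combined with \cite[Theorem 3.1]{Woo19} and \cite[Theorem 4.1]{NW25} yield Theorem \ref{main thm finite prime case}.
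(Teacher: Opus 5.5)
There is a genuine gap in case $\mc{C}^W_{3,c}$, which you say transfers unchanged.

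Several parts are fine. Your main-term argument (CRT splitting of $G_{tr}$, Proposition \ref{prop3d} for general $G$, M\"obius inversion) works. Your inequality $|\bE\ze^{rx}|\le 1-c'\al\min_{t\in R^\times}\sin^2(\pi rt/N)$ is true with $c'\to2$, and it suffices for $\mc{C}_1$, $\mc{C}^W_{3,a}$, $\mc{C}^W_{3,b}$ and $\mc{C}^W_{3,d}$.

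It cannot carry Proposition \ref{prop_case3c}, however. That proof needs $\sum_{\mb{g}_y\in W}S_y\ge\kappa|X||G|/2$. This comes from Lemma \ref{lem: sin sum} applied to $\mb{g}_y\mapsto t_{x,y,n}B(\mb{g}_x,\mb{g}_y)$, where the multiplier $t_{x,y,n}$ is attached to the entry $(x,y)$ and does not vary with $\mb{g}_y$. In your bound the optimal $t$ depends on $r=B(\mb{g}_x,\mb{g}_y)$, so the averaging identity is lost. Already for $N=5$, where $B(\mb{g}_x,\cdot)$ is equidistributed on $W$, the average of $\min_t\sin^2(\pi tr/5)$ over $W$ is $\frac45\sin^2(\pi/5)\approx0.28<\frac12$.

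With an average $\theta<\frac12$, each $y\notin X$ contributes only $1-2\theta c_2\frac{\log n}{n}|X|$ (after summing over $\mb{g}_y\in W$ and dividing by $|G|$). The resulting bound on the $|X|=1$ terms is then of order $(|G|-1)\,n^{1-2\theta c_2}$, which does not tend to $0$ when $c$ is close to $1$. Testing against two-point laws $(1-\al)\delta_0+\al\delta_t$ shows that any bound whose right-hand side depends on $r$ alone is no better than your $\min_t$ version. So no choice of $c'$ repairs this: the bound must remember the law of each entry, and the sharpness that matters is that of the average over $W$, not of a pointwise constant.

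The missing ingredient is a description of the extreme points of the multi-prime polytope that can be averaged atom by atom.
\begin{enumerate}
\item Apply Bauer's principle to the whole sum, as in Lemma \ref{lem: extreme point}, so that every entry becomes an extreme law independent of $(\mb{g}_1,\ldots,\mb{g}_n)$.
\item By Lemma \ref{lem: extreme point2} (with $m=N$, $\al<1/N$), such a law is, after a shift, $\bP(z=0)=1-S$ plus atoms $t_1,\ldots,t_k$ generating $R$ with masses $s_l\in[\al/(N-1)!,\al]$.
\item Lemma \ref{lemma_ine2} then gives $|\bE\ze^{rz}|\le1-2(1-S)\sum_l s_l\sin^2(\pi t_lr/N)$. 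Redefine $S_y$ as $\frac{1}{\al_n}$ times the corresponding weighted sum.
\item Write $\{B(\mb{g}_x,w):w\in W\}=a_0+m_0R$ with $m_0<N$, and pick a prime $p\mid N/m_0$. Every atom with $p\nmid t_l$ makes $t_lB(\mb{g}_x,\cdot)$ non-constant affine on $W$, so Lemma \ref{lem: sin sum} gives $|G|/2$ for it.
\item Balancedness modulo $p$ gives $\sum_{l:\,p\nmid t_l}s_l=1-\bP(z\equiv0\bmod p)\ge\al_n$.
\end{enumerate}
Together these yield $\sum_{\mb{g}_y\in W}S_y\ge\kappa|X||G|/2$, exactly as in the single-prime case.

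Your idea of using a prime dividing the order of $r$ is the right ingredient. But the prime has to be chosen from the coset $B(\mb{g}_x,W)$, uniformly in $\mb{g}_y$, and applied to the fixed atom structure of each entry.
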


For $|H| = \prod_{p \in \mc{P}} p^{e_p}$ ($e_p \ge 0$), let $m := \prod_{p \in \mc{P}} p^{e_p+1}$ and let $R=\bZ/m\bZ$. 
Let $\ol{A}(n)$ be the reduction of $A(n)$ modulo $m$, which is a $\al_n$-balanced random matrix in $\mf{M}_n(R)$. Then 
$$
\cok(\ol{A}(n)) \cong \cok(A(n)) \otimes \bZ/m\bZ
$$ 
and
$$
\cok(\ol{A}(n)) \cong H \quad \text{if and only if} \quad \cok(A(n))_{\mc{P}} \cong H.
$$
Hence Theorem \ref{main thm finite prime case}(a) and (b) follow from \cite[Theorem 3.1]{Woo19} together with Theorem \ref{moment thm finite prime case}. For part (c), we refer to the proof of \cite[Theorem 1.13]{NW25}. The proof uses \cite[Theorem 3.1]{NW25}, which corresponds to Theorem \ref{moment thm finite prime case} in the case $\mf{M}_n = \mr{Alt}_n$. We remark that in \cite[Theorem 1.13]{NW25}, the condition
$$
\lim_{n \to \infty} \bP(\rank(A(2n+1)) = 2n) = 1
$$
implies that
$$
\lim_{n \to \infty} \bP\left(\cok_{\mr{tors}}(A(2n+1))_{\mc{P}} \cong H \right) =
\lim_{n \to \infty} \bP\left(\cok(A(2n+1))_{\mc{P}} \cong \bigoplus_{p \in \mc{P}} \zp \times H \right).
$$

Fix a finite abelian group $G$ and let $m$ be a positive integer such that $mG=0$. Let $\ze := \exp\left(\frac{2\pi i}{m}\right)$ and let
$$
G = \bZ/m_1\bZ \times \cdots \times \bZ/m_r\bZ,
$$
where $m_r \mid m_{r-1} \mid \cdots\mid m_1 \mid m$ with $m_i \ge 2$. Define a perfect $R$-bilinear pairing $\cdot: G \times G \to R$ by 
$$
(x_1 + m_1\bZ, \ldots, x_r+m_r\bZ)\cdot (y_1 + m_1\bZ, \ldots, y_r+m_r\bZ):= \sum_{i=1}^r \frac{m}{m_i} x_iy_i + m\bZ.
$$
Following the computation of Section \ref{Sub22}, we obtain
\begin{align*}
\bE(\# \Hom(\cok(A(n)), G)) 
& = \sum_{F \in \Hom(\bZ^n, G)} \bP(F(A(n))=0) \\
& = \sum_{F \in \Hom(R^n, G)} \bP(F(\ol{A}(n))=0) \\
& = \frac{1}{\lt G\rt^n} \sum_{\substack{g_1,\cdots,g_n\in G\\ h_1,\cdots,h_n\in G}}\bE\left(\ze^{\sum_{k,l\in[n]}\ol{A}(n)_{kl}(g_k \cdot h_l)}\right).
\end{align*}

The proof follows the same general strategy as in the case of random matrices over $\zp$. The only new feature is that, over 
$R=\bZ/m\bZ$, the relevant extreme points are no longer supported on just two values as in Lemma \ref{lem: extreme point}. The next lemma characterizes these points and provides the analogue of Lemma \ref{lem: extreme point} in the present setting.

\begin{lem}\label{lem: extreme point2}
Let $I$ and $J$ be finite sets, $b_{i,j} \in R$ for every $i \in I$, $j \in J$ and $\al \in (0,\frac{1}{m})$. Then for every $(x_j)_{j\in J}\in\mr{Ran}(R,\al)^J$, we have 
$$
\lt\sum_{i\in I}\prod_{j\in J}\bE\left(\ze^{b_{i,j}x_j}\right)\rt
\le\sum_{i\in I}\prod_{j\in J}\lt \bE\left(\ze^{b_{i,j}z_j}\right)\rt
=\sum_{i\in I}\prod_{j\in J}\lt1-\sum_{l=1}^{k_j} s_{l,j}(1-\ze^{b_{i,j}t_{l,j}})\rt
$$
where for each $j \in J$, $1 \le k_j \le m-1$ and $z_j \in \mr{Ran}(R,\al)$ satisfies
$$
\bP(z_j=0)=1-\sum_{l=1}^{k_j}s_{l,j} \in (\al, 1-\al] \text{ and } \bP(z_j=t_{l,j})=s_{l,j} \in [\al/(m-1)!,\al]
$$
for distinct elements $t_{1,j}, \ldots, t_{k_j,j} \in R \setminus \{0 \}$ that generate $R$.
\end{lem}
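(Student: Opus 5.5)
The plan follows the proof of Lemma \ref{lem: extreme point}, with a finer description of the extreme points. For $R=\bZ/m\bZ$, an element of $\mr{Ran}(R,\al)$ is identified with a point of the convex compact polytope
$$
C_{m,\al}:=\Big\{(s_a)_{a\in R}\in[0,1]^R:\ \sum_a s_a=1,\ \sum_{a\equiv b \ (\mr{mod}\ p)} s_a\le 1-\al \text{ for all primes } p\mid m \text{ and all } b\Big\}.
$$
Here $\al$-balancedness is imposed modulo every prime divisor of $m$, matching the definition of $\al$-balanced integer matrices. The function $\sum_{i}\prod_j\bE(\ze^{b_{i,j}x_j})$ is affine in each $x_j$ separately. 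Applying Bauer's maximum principle one coordinate at a time shows that the absolute value is maximized at a tuple of extreme points $z_j'$. Since $|\sum_i\cdots|\le\sum_i\prod_j|\cdots|$, the first inequality follows once we also maximize the right-hand side over extreme points. Finally, we translate each $z_j'$ by a value it attains, which leaves every $|\bE(\ze^{b z})|$ unchanged, and call the result $z_j$. Then $z_j$ has an atom at $0$, and $|\bE\ze^{bz_j}|=|1-\sum_l s_{l,j}(1-\ze^{bt_{l,j}})|$. Translation preserves $\al$-balancedness, so $z_j\in\mr{Ran}(R,\al)$.

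The main work is describing the extreme points of $C_{m,\al}$. A vertex is determined by $m$ linearly independent active constraints among: $s_a=0$, the total sum, and the residue-class constraints $\sum_{a\equiv b\,(p)}s_a=1-\al$. Classes for the same prime are disjoint, so for each prime at most one class can be tight, since two tight classes would need $2(1-\al)\le1$. With $\omega(m)$ distinct primes this gives at most $\omega(m)+1$ non-vanishing constraints, so the support has size $k+1\le\omega(m)+1\le m$, giving $1\le k\le m-1$. Then:

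\begin{enumerate}
\item The nonzero coordinates solve a square $0/1$ linear system whose right-hand side has entries $1$ and $1-\al$. By Cramer's rule each coordinate is a ratio of integer combinations of $1$ and $\al$ with denominator a determinant of absolute value at most $(m-1)!$. Each nonzero coordinate has the form $\epsilon\al/D$ or $1-(\text{combination of }\al\text{'s})$, so the small atoms satisfy $s\ge\al/(m-1)!$. Moreover $s\le\al$, because moving $s$ off a tight class costs at least as much as it contributes.
\item Choose the largest atom as the translate to $0$. Its mass is $1-\sum_l s_{l,j}$, and balancedness caps it at $1-\al$. It exceeds $\al$ because the other atoms total at most about $(m-1)\al<1-\al$, which uses $\al<1/m$.
\item The generation claim: if $t_{1,j},\dots,t_{k_j,j}$ did not generate $R$, they would lie in $pR$ for some prime $p\mid m$. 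Then all the mass would sit in the class $0\bmod p$, which has mass at most $1-\al<1$, a contradiction.
\end{enumerate}

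The main obstacle is step 1: showing that each nonzero coordinate other than the dominant one is exactly of order $\al$, between $\al/(m-1)!$ and $\al$. The first thing to try is a perturbation argument as in Lemma \ref{lem: extreme point}. Any two atoms whose simultaneous shift preserves all tight constraints would contradict extremality, so the support matrix must be square and invertible. From that point the bounds should reduce to a determinant estimate (Hadamard, or simply the $(m-1)!$ bound on $0/1$ minors), together with the observation that every small atom is forced to equal a difference of the form $(1-\al)-(\text{partial sums})$, which is linear in $\al$ with zero constant term.
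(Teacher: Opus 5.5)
\textbf{There is a genuine gap in step 1, which carries the real content of the lemma.} The rest of your framework is sound. Bauer's principle on $C_{m,\al}$, translation by an attained value, and the generation argument (all mass in $pR$ would violate balancedness) are what the paper does. Your active-constraint count, which bounds the support by $\omega(m)+1$, is also correct.

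\emph{The upper bound.} The bound $s\le\al$ for the non-dominant atoms rests only on the phrase ``moving $s$ off a tight class costs at least as much as it contributes,'' which is not an argument. You need this bound for more than the statement itself. It is also what ensures that translating the largest atom to $0$ gives $\bP(z_j=0)>\al$ with every other atom $\le\al$; in particular, you must exclude two atoms both exceeding $\al$.

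\emph{The lower bound.} This is the more serious problem. Cramer's rule on your $(k+1)\times(k+1)$ system, with right-hand sides $1$ and $1-\al$, only gives each coordinate as $(a+b\al)/D$ with $a,b\in\bZ$. You assert but do not show that $a=0$ for the small atoms. Without that, a positive value $(a+b\al)/D$ need not be bounded below by any multiple of $\al$, so $\al/(m-1)!$ does not follow. The naive determinant bound for your system is also $(k+1)!$, not $(m-1)!$.

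\textbf{The missing idea is to determine which class constraints are tight.}
\begin{enumerate}
\item The complement of a tight class has mass exactly $\al$. A tight class omitting the largest atom would therefore force every atom to be $\le\al$ and the total to be $\le m\al<1$. Hence every tight class contains the largest atom.
\item Suppose another atom also lay in every tight class. Shifting a little mass between it and the largest atom would preserve all active constraints, contradicting extremality. So every other atom lies in the complement of some tight class and is $\le\al$. This also gives uniqueness of the atom exceeding $\al$.
\item Subtract the total-sum equation from each tight-class equation. Each becomes $\sum s=\al$ over the small atoms outside that class. This is a $0/1$ system in the small atoms alone, with right-hand side $\al\mb{1}$ and, by extremality, full column rank.
\item A nonsingular $k\times k$ minor $M_0$ then gives $s\in\frac{\al}{\det M_0}\bZ_{>0}$ with $|\det M_0|\le k!\le(m-1)!$.
\end{enumerate}
This is exactly the paper's system $M_j(s_{l,j})=\al\mb{1}$, written after the dominant atom has been moved to $0$.

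For the upper bound the paper avoids the tight-constraint analysis with a direct convex-combination argument. If $s_i,s_j>\al$, then $\mb{s}$ is a proper convex combination of the two points obtained by lowering one of these coordinates to $\al$ and adding the excess to the other. Both points stay in $C_{m,\al}$: any residue class whose mass increases omits the lowered coordinate, which keeps mass $\al$, so that class still has mass at most $1-\al$.
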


\begin{proof}
As in the proof of Lemma \ref{lem: extreme point}, the absolute value of $\sum_{i\in I}\prod_{j\in J}\bE\left(\ze^{b_{i,j}x_j}\right)$ attains its maximum at a point $(x_j)_{j \in J}$ where each $x_j$ corresponds to the extreme point of the set 
$$
C_{m,\al}:=\left\{\mb{s} = (s_1,\cdots,s_{m})\in[0,1]^m:\sum_{i=1}^{m} s_i=1 \text{ and }\sum_{i \equiv a \pmod{p}} s_i \le 1-\al \text{ for all } p \in \mc{P} \text{ and } 0 \le a \le p-1 \right\}.
$$
Let $\mb{s} = (s_1,\cdots,s_{m})$ be an extreme point of $C_{m,\al}$. Assume that $s_i, s_j > \al$ for some $i \ne j$. Define $\mb{s'} = (s'_1,\cdots,s'_{m})$ and $\mb{s''} = (s''_1,\cdots,s''_{m})$ in $C_{m,\al}$ by
$$
s'_t = \left\{\begin{matrix}
s_t & t \ne i,j, \\
s_i+s_j-\al & t=i,\\
\al & t=j
\end{matrix}\right. \quad \text{and} \quad s''_t = \left\{\begin{matrix}
s_t & t \ne i,j, \\
\al & t=i,\\
s_i+s_j-\al & t=j.
\end{matrix}\right. 
$$
Then $\mb{s} = \eta\mb{s'}+(1-\eta)\mb{s''}$ for some $\eta \in (0,1)$, contradicting the extremality of $\mb{s}$. Since $\sum_{i=1}^{m} s_i = 1 > m \al$, it follows that for every extreme point $\mb{s} = (s_1,\cdots,s_{m})$ of $C_{m,\al}$ there exists a unique $i \in [m]$ such that $s_i > \al$. 

Now fix $j\in J$ and consider the corresponding element $x_j\in \mr{Ran}(R,\al)$. Then there exists $i_j \in [m]$ such that $\bP(x_j = i_j) > \al$ and $\bP(x_j = x) \le \al$ for all $x \ne i_j$. Replacing $x_j$ by $x_j-i_j$ (which leaves the sum $\sum_{i\in I}\prod_{j\in J}\lt\bE\left(\ze^{b_{i,j}z_j}\right)\rt$ unchanged), we may assume that $i_j=0$. This yields the desired element $z_j$.
Suppose that $\langle t_{1,j},\cdots,t_{k_j,j}\rangle= m_0R \ne R$. Then $m_0R\subseteq pR$ for some $p\in\mc{P}$ so
$$
\bP(z_j\equiv0\bmod{p})=\bP(z_j=0)+\sum_{l=1}^{k_j}s_{l,j}=1,
$$
which is impossible. 

It remains to prove that $s_{l,j}\ge\frac{\al}{(m-1)!}$. For each $p\in\mc{P}$, define 
$$
C_{p,j}:=\{l \in [k_j] : t_{l,j}\not\equiv0\bmod{p}\}.
$$
Then
$$
\sum_{l\in C_{p,j}}s_{l,j}=1-\bP(z_j\equiv0\bmod{p})\ge\al.
$$
Let $\mc{P}_j\subset\mc{P}$ be a subset of primes in $\mc{P}$ for which the inequality becomes an equality. The fact that $\sum_{l\in C_{p,j}}s_{l,j}=\al$ for every $p \in \mc{P}_j$ can be written in matrix form as
$$
M_j \begin{pmatrix}
s_{1,j}\\\vdots\\s_{k_j,j}
\end{pmatrix}=\begin{pmatrix}
\al\\\vdots\\\al
\end{pmatrix},
$$
where $M_j$ is a (not necessarily square) matrix with entries in $\{ 0,1 \}$. Since $z_j$ is an extreme point, the homogeneous system $M_j v=0$ admits only the trivial solution; hence $M_j$ has full rank $k_j$. In other words, there exists a $k_j\times k_j$ minor $M_{0,j}$ of $M_j$ such that
$$
\begin{pmatrix}
s_{1,j}\\\vdots\\s_{k_j,j}
\end{pmatrix}=M_{0,j}^{-1}\begin{pmatrix}
\al\\\vdots\\\al
\end{pmatrix}.
$$
Since $s_{l,j} > 0$, $M_{0,j}^{-1} \in \M_{k_j}\left(\frac{1}{\det M_{0,j}}\bZ\right)$ and $\lt\det M_{0,j} \rt \le k_j!\le (m-1)!$, the above identity implies that $s_{l,j} \ge \frac{\al}{(m-1)!}$.
\end{proof}

With this preparation, the remaining estimates follow by the same argument as in Section \ref{Sec4}, once the appropriate analogue of Lemma \ref{lem_ine1} is established. We now prove this analogue.

\begin{lem}\label{lemma_ine2}
Let $1 \le k \le m-1$, $s_1,\cdots,s_k\in[\al_n/(m-1)!,\al_n]$, and assume that $t_1,\cdots,t_k \in R \setminus \{0\}$ generate $R$. Then for every nonzero $r\in R$ and all sufficiently large $n$, 
$$
\lt1-\sum_{i=1}^ks_i(1-\ze^{t_ir})\rt
\le1-2\left(1-\frac{kc\log n}{n}\right) \sum_{i=1}^ks_i\sin^2\left(\frac{\pi t_ir}{m}\right)
\le1-\frac{8c_1}{m^2\cdot (m-1)!}\frac{\log n}{n}.
$$
\end{lem}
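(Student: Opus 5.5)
Write $S:=\sum_{i=1}^k s_i(1-\ze^{t_ir})$ and $\theta_i:=\frac{\pi t_i r}{m}$. I would expand $\lt 1-S\rt^2$ directly, as in the proof of Lemma \ref{lem_ine1}. Since $\mr{Re}(1-\ze^{t_ir})=2\sin^2\theta_i$, we get
$$
\lt 1-S\rt^2 = 1-2\,\mr{Re}\,S+\lt S\rt^2 = 1-4\sum_{i=1}^k s_i\sin^2\theta_i+\lt S\rt^2 .
$$
The key point is to bound $\lt S\rt^2$ by a multiple of the linear term rather than by $(\sum_i s_i)^2$. Using $\lt 1-\ze^{t_ir}\rt=2\lt\sin\theta_i\rt$ and Cauchy--Schwarz,
$$
\lt S\rt^2\le\Big(\sum_i s_i\cdot 2\lt\sin\theta_i\rt\Big)^2\le\Big(\sum_i s_i\Big)\cdot 4\sum_i s_i\sin^2\theta_i\le k\al_n\cdot 4\sum_i s_i\sin^2\theta_i .
$$
Writing $T:=2\sum_i s_i\sin^2\theta_i$, this gives $\lt 1-S\rt^2\le 1-2T(1-k\al_n)$, with $k\al_n=\frac{kc\log n}{n}$. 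Finally, $\sqrt{1-x}\le 1-\frac{x}{2}$ for $x\in[0,1]$ yields the first inequality:
$$
\lt 1-S\rt\le 1-\Big(1-\frac{kc\log n}{n}\Big)T .
$$

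For the second inequality I need a lower bound on $T$, and this is the only place the hypothesis on the $t_i$ enters. Since $t_1,\ldots,t_k$ generate $R$ and $r\neq 0$ in $R$, not all $t_ir$ can vanish. Otherwise $r$ would annihilate the ideal generated by the $t_i$, which is all of $R$, forcing $r=0$. So some $i_0$ has $t_{i_0}r\not\equiv 0\pmod m$. For that index, $\sin^2\frac{\pi t_{i_0}r}{m}\ge\sin^2\frac{\pi}{m}\ge\frac{4}{m^2}$, using $\sin x\ge\frac{2x}{\pi}$ on $[0,\pi/2]$. Together with $s_{i_0}\ge\frac{\al_n}{(m-1)!}$ this gives
$$
T\ge 2\cdot\frac{c\log n}{n(m-1)!}\cdot\frac{4}{m^2}=\frac{8c}{m^2(m-1)!}\frac{\log n}{n}.
$$
Multiplying by $(1-\frac{kc\log n}{n})$, which tends to $1$ with $k\le m-1$ fixed, and using $c_1<c$, the product is at least $\frac{8c_1}{m^2(m-1)!}\frac{\log n}{n}$ for all sufficiently large $n$.

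The main obstacle is the handling of $\lt S\rt^2$: a crude bound of order $\al_n^2$ would not be comparable to the main term when $T$ is small, so the Cauchy--Schwarz step producing the factor $(1-k\al_n)$ is essential. Everything else is routine; the other thing to check is that $x=2T(1-k\al_n)\le 1$ so the square-root inequality applies, which holds for large $n$ since $T\le 2k\al_n\to 0$.
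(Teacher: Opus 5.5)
Your proof is correct and is essentially the paper's argument. Both establish $\lt 1-\sum_i s_i(1-\zeta^{t_i r})\rt^2 \le 1-4\bigl(1-\sum_i s_i\bigr)\sum_i s_i\sin^2\frac{\pi t_i r}{m}$, apply $\sqrt{1-x}\le 1-\frac{x}{2}$, and then use that the $t_i$ generate $R$, together with $\sin^2\frac{\pi}{m}\ge\frac{4}{m^2}$ and $s_i\ge\frac{\al_n}{(m-1)!}$. The only difference is how the quadratic term is controlled: the paper expands into real and imaginary parts and bounds the cross terms $\cos\frac{2\pi(t_i-t_j)r}{m}\le 1$, whereas you apply Cauchy--Schwarz to $\lt\sum_i s_i(1-\zeta^{t_ir})\rt$. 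Both routes give exactly the same intermediate bound.
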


\begin{proof}
Write $S:=\sum_{i=1}^k s_i$ and $\theta_i:=\frac{\pi t_i}{m}$, so that $\ze^{t_ir}=\cos(2\theta_ir) + i\sin(2\theta_ir)$. Suppose that $n$ is sufficiently large so that $S<1$, then
\begin{align*}
\lt1-\sum_{i=1}^ks_i(1-\ze^{t_ir})\rt
&=\lt (1-S+\sum_{i=1}^{k} s_i\cos(2\theta_ir)) + i\sum_{i=1}^{k} s_i\sin(2\theta_ir) \rt\\
&=\sqrt{(1-S)^2+\sum_{i=1}^ks_i^2+2(1-S)\sum_{i=1}^k s_i\cos(2\theta_ir)+2\sum_{1\le i<j\le k}s_is_j\cos\left(2(\theta_i-\theta_j)r\right)}\\
&\le\sqrt{(1-S)^2+\sum_{i=1}^ks_i^2+2(1-S)\sum_{i=1}^ks_i\cos(2\theta_ir)+2\sum_{1\le i<j\le k}s_is_j}\\
&=\sqrt{1-4(1-S)\sum_{i=1}^ks_i\sin^2(\theta_ir)}\\
&\le1-2(1-S)\sum_{i=1}^ks_i\sin^2(\theta_ir).
\end{align*}
The upper bound $S \le k\al_n$ proves the first inequality. 

Since $t_1, \ldots, t_k$ generate $R$, for every nonzero $r \in R$ there exists $i\in[k]$ where $\sin(\theta_ir)\ne 0$. Using the inequalities $s_i \ge \al_n/(m-1)!$ and $\sin^2(\theta_i r)\ge (2/m)^2$, we obtain
\begin{align*}
\lt1-\sum_{i=1}^{k}s_i(1-\ze^{t_ir})\rt
&\le1-2\left(1-\frac{kc\log n}{n}\right)\frac{\al_n}{(m-1)!} \sum_{i=1}^{k}\sin^2\left(\theta_ir\right)\\
&\le1-\frac{2c_1}{(m-1)!}\frac{\log n}{n}\cdot\left(\frac{2}{m}\right)^2\\
&=1-\frac{8c_1}{m^2\cdot (m-1)!}\frac{\log n}{n}
\end{align*}
for all sufficiently large $n$, which completes the proof.
\end{proof}

We may apply Lemma \ref{lemma_ine2} to the cases $\mc{C}_1$, $\mc{C}_{3,a}^W$, $\mc{C}_{3,b}^W$ and $\mc{C}_{3,d}^W$. The proof in the case $\mc{C}_2$ goes through verbatim.
In the remaining case $\mc{C}_{3,c}^W$, we redefine $S_y$ by
$$
S_y:=\sum_{x\in X}\frac{1}{\al_n}\left(\sum_{l=1}^{k_{x,y}}s_{l,x,y}\sin^2\frac{\pi t_{l,x,y,n}B(\mb{g}_x,\mb{g}_y)}{m}+\sum_{l=1}^{k_{y,x}}s_{l,y,x}\sin^2\frac{\pi t_{l,y,x,n}B(\mb{g}_y,\mb{g}_x)}{m}\right)\le2m\lt X\rt.
$$
Here we write $W=\mb{g}+H$ as before, and for each ordered pair $(x,y)$ let $k_{x,y}$, $s_{l,x,y}$ and $t_{l,x,y,n}$ be the parameters given by Lemma \ref{lem: extreme point2}.

Fix $\mathbf g_x\notin W$. By condition \ref{bilinear_cond2} in Section \ref{Sub23}, we may assume without loss of generality that the map  $B(\mb{g}_x,\mb{g}+*)$ is a non-constant affine map on $H$ (and also $B(\mb{g}+*,\mb{g}_x)$ if $\kappa=2$). Write $$\{B(\mb{g}_x,w):w\in W\} = a_0+m_0R$$ 
with $m_0<m$ a divisor of $m$ and $a_0 \in R$. 
Then
\begin{align*}
\sum_{\mb{g}_y\in W}S_y&\ge\kappa\sum_{x\in X}\sum_{l=1}^{k_{x,y}}\frac{s_{l,x,y}}{\al_n}\cdot\sum_{\mb{g}_y\in W}\sin^2\frac{\pi t_{l,x,y,n}B(\mb{g}_x,\mb{g}_y)}{m}
\\&\ge\frac{\kappa\lt G\rt}{2}\sum_{x\in X} \sum_{\substack{1 \le l \le k_{x,y} \\m_0t_{l,x,y,n}\ne0}}\frac{s_{l,x,y}}{\al_n}
\\&\ge\frac{\kappa\lt G\rt}{2}\sum_{x\in X}\frac{1}{\al_n}\sum_{\substack{1 \le l \le k_{x,y} \\ p \,\nmid\, t_{l,x,y,n}}}s_{l,x,y}
\\&\ge\frac{\kappa\lt X\rt\lt G\rt}{2},
\end{align*} 
where $p \in \mc{P}$ is a prime divisor of $m/m_0>1$. By following the proof of Proposition \ref{prop_case3c} with this modification, we see that the argument also works in the present case, and this completes the proof of Theorem \ref{moment thm finite prime case}. 

\bigskip
\section*{Acknowledgments}

Jungin Lee was supported by the National Research Foundation of Korea (NRF) grant funded by the Korea government (MSIT) (No. RS-2024-00334558 and No. RS-2025-02262988). Myungjun Yu was supported by the National Research Foundation of Korea (NRF) grant funded by the Korea government (MSIT) (RS-2025-23525445).



\begin{thebibliography}{99}

\bibitem{Bol01}
B. Bollobás, Random Graphs, Cambridge University Press, Cambridge, 2001.

\bibitem{CY23}
G. Cheong and M. Yu, The distribution of the cokernel of a polynomial evaluated at a random integral matrix, arXiv:2303.09125, to appear in Amer. J. Math.



\bibitem{CT06}
T. M. Cover and J. A. Thomas, Elements of Information Theory, Second Edition, John Wiley \& Sons, 2006.


\bibitem{FG15}
J. Fulman and L. Goldstein, Stein’s method and the rank distribution of random matrices over finite fields, Ann. Probab. 43 (2015), no. 3, 1274--1314.


\bibitem{Lee23}
J. Lee, Universality of the cokernels of random $p$-adic Hermitian matrices, Trans. Amer. Math. Soc. 376 (2023), no. 12, 8699--8732.


\bibitem{Lee25}
J. Lee, Sharp threshold for universality of cokernels of random matrices over finite fields, arXiv:2511.13070.

\bibitem{NVP24}
H. H. Nguyen and R. Van Peski, Universality for cokernels of random matrix products, Adv. Math. 438 (2024), 109451.

\bibitem{NW22}
H. H. Nguyen and M. M. Wood, Random integral matrices: universality of surjectivity and the cokernel, Invent. Math. 228 (2022), 1--76.

\bibitem{NW25}
H. H. Nguyen and M. M. Wood, Local and global universality of random matrix cokernels, Math. Ann. 391 (2025), no. 4, 5117--5210.

\bibitem{Woo17}
M. M. Wood, The distribution of sandpile groups of random graphs, J. Amer. Math. Soc. 30 (2017), no. 4, 915--958.

\bibitem{Woo19}
M. M. Wood, Random integral matrices and the Cohen--Lenstra heuristics, Amer. J. Math. 141 (2019), no. 2, 383--398.

\bibitem{Woo22}
M. M. Wood, Probability theory for random groups arising in number theory, Proc. Int. Cong. Math. 2022, Vol. 6, 4476--4508.
\end{thebibliography}
\end{document}